\newtheorem{theorem}{Theorem}[section] % 定理按节编号
\newtheorem{lemma}[theorem]{Lemma}
\newtheorem{definition}[theorem]{Definition} % [theorem] 表示与 theorem 环境共用计数器
\newtheorem{proposition}[theorem]{Proposition}
\numberwithin{equation}{section}    % 按照节给方程编号
\journal{}
\begin{document}

\begin{frontmatter}

%% Title, authors and addresses

%% use the tnoteref command within \title for footnotes;
%% use the tnotetext command for theassociated footnote;
%% use the fnref command within \author or \affiliation for footnotes;
%% use the fntext command for theassociated footnote;
%% use the corref command within \author for corresponding author footnotes;
%% use the cortext command for theassociated footnote;
%% use the ead command for the email address,
%% and the form \ead[url] for the home page:
%% \title{Title\tnoteref{label1}}
%% \tnotetext[label1]{}
%% \author{Name\corref{cor1}\fnref{label2}}
%% \ead{email address}
%% \ead[url]{home page}
%% \fntext[label2]{}
%% \cortext[cor1]{}
%% \affiliation{organization={},
%%             addressline={},
%%             city={},
%%             postcode={},
%%             state={},
%%             country={}}
%% \fntext[label3]{}

\title{Normalized solutions of $L^2$ supercritical NLS equations in exterior domains with inhomogeneous nonlinearities} %% Article title

%% use optional labels to link authors explicitly to addresses:
\author[label1]{Xiaojun Chang}
\ead{changxj100@nenu.edu.cn }
\author{Cong-Mei Li\corref{cor1}\fnref{label2,label3}}
\ead{ licongmei@amss.ac.cn }
\cortext[cor1]{Corresponding Author}

\affiliation[label1]{
    organization={School of Mathematics and Statistics $\&$ Center for Mathematics and Interdisciplinary Sciences},
    addressline={Northeast Normal University},
    city={Changchun},
    postcode={130024},
    state={Jilin},
    country={PR China}
}

\affiliation[label2]{organization={Academy of Mathematics and Systems Science},
    addressline={Chinese Academy of Sciences},
    city={Beijing},
    postcode={100190},
    country={PR China}}
\affiliation[label3]{organization={University of Chinese Academy of Sciences},
    city={Beijing},
    postcode={100049}, 
    country={PR China}}

% \author{} %% Author name

% %% Author affiliation
% \affiliation{organization={},%Department and Organization
%             addressline={}, 
%             city={},
%             postcode={}, 
%             state={},
%             country={}}

%% Abstract
\begin{abstract}
%% Text of abstract
This paper establishes the existence of normalized mountain pass solutions to the $L^2$-supercritical nonlinear Schrödinger equation with inhomogeneous nonlinearity $|x|^{-\alpha}|u|^{p-2}u$ in exterior domains. In contrast, for the autonomous case ($\alpha=0$), Appolloni \& Molle (2025) and Zhang \& Zhang (2022) showed that potential mountain pass solutions share the same energy levels as in $\mathbb{R}^N$, causing non-existence due to energy leakage to infinity. This work demonstrates that the physically motivated decaying term $|x|^{-\alpha}$ breaks the scaling symmetry inherent in the autonomous case. Such breaking energetically separates the exterior domain problem from the whole space one and thereby prevents energy leakage. Using a novel min-max argument that combines monotonicity trick, Morse index estimates, and blow-up analysis, we prove the existence of a positive mountain pass solution for sufficiently small mass, revealing a new phenomenon of non-autonomous nonlinearities in non-compact domains.

\end{abstract}

%%Graphical abstract
% \begin{graphicalabstract}
% %\includegraphics{grabs}
% \end{graphicalabstract}

%%Research highlights
% \begin{highlights}
% \item Resolves autonomous equation’s mountain pass solution non-existence via $|x|^{-\alpha}$ to block energy leakage.
% \item Establish the existence of a bounded Palais-Smale sequence corresponding to the mountain pass geometry and carrying Morse type information.
% \item Innovatively analyze compactness by exploiting the asymptotic behavior of solutions under a closed ball constraint as the mass tends to zero.
% \end{highlights}

%% Keywords
\begin{keyword}
Inhomogeneous Nonlinear {S}chr{\"o}dinger equations \sep exterior domains \sep $L^2$ supercritical \sep variational methods
%% keywords here, in the form: keyword \sep keyword

%% M here, in the form: \PACS code \sep code
\MSC 35J60 \sep 47J30
%% MSC codes here, in the form: \MSC code \sep code
%% or \MSC[2008] code \sep code (2000 is the default)

\end{keyword}

\end{frontmatter}

%% Add \usepackage{lineno} before \begin{document} and uncomment 
%% following line to enable line numbers
%% \linenumbers

%% main text
%%
\section{Introduction} \label{sec1}	
In this paper, we study the existence of solutions $(u,\lambda)\in H_0^1(\Omega)\times \mathbb{R}$ to the problem
\begin{equation}\label{1.1}
\begin{cases} 
   -\Delta u + \lambda u = |x|^{-\alpha}|u|^{p-2} u \quad &\text{in } \Omega, \\
   u = 0 \quad&\text{on } \partial\Omega,  
\end{cases}
\end{equation}
under the mass constraint
\begin{equation}\label{1.2}
    \int_{\Omega} u^2 \, dx = \mu>0,
\end{equation}
where \(N\geq3,\) \( \Omega\subset\mathbb{R}^N\) is an exterior domain such that $0\notin \overline\Omega$ (i.e., $\mathbb{R}^N\backslash \Omega$ is compact and there exists $r_0>0$ such that $|x|>r_0$ for all $x\in\Omega$), $\alpha>0$ is fixed and $p\in(2 + 4/N , 2^* )$ with $2^*=2N/(N-2)$. Here $\lambda$ appears as a Lagrange multiplier associated with the mass
constraint $($\ref{1.2}$)$.

A solution with prescribed mass is often referred to as a \textit{normalized solution}, the solutions of equations $($\ref{1.1}$)$-$($\ref{1.2}$)$ correspond to critical points of the energy functional $E:H_0^1(\Omega)\to \mathbb{R}$ defined by 
\begin{equation*}
    E(u):=\frac{1}{2}\int_\Omega
|\nabla u|^2 dx-\frac{1}{p}\int_{\Omega}|x|^{-\alpha}|u|^pdx\end{equation*}
constrained to the $L^2$-sphere
\begin{equation*}
    S_{\mu} := \left\{ u \in H_0^1(\Omega) : \int_{\Omega} u^2 \, dx = \mu \right\}.
\end{equation*}
Equation $($\ref{1.1}$)$ arises naturally in physical contexts such as nonlinear optics, quantum mechanics, and astrophysics, where it describes stationary waves of the nonlinear {S}chr{\"o}dinger equation
\begin{equation*}
\begin{cases}
i \frac{\partial \psi}{\partial t} = -\Delta \psi  + |x|^{-\alpha} |\psi|^{p-2} \psi \quad &\text{in } \Omega \times (0, +\infty),\\
\psi(x,t) = 0 \quad &\text{on } \partial \Omega \times (0, +\infty)
\end{cases}
\end{equation*}
with prescribed $L^2$-norm. Standing waves of the form $\psi(x,t)=e^{i\lambda t}u(x)$ thus motivate the study of normalized solutions. We refer to \cite{berestycki1983nonlinear,cazenave1982orbital} for further physical background.

In the case of $\alpha = 0,$ normalized solutions of nonlinear {S}chr{\"o}dinger equations have been extensively studied. In $\mathbb{R}^N,$ results for mass supercritical nonlinearities appear in \cite{bartsch2021normalized, bartschsoave2017jfa, JeanjeanLu2020}, while combined nonlinearities are treated in \cite{Soavejfa2020, Soavejde2020}. Studies on metric graphs include \cite{Borthwick_2023, changcompact2024} for mass supercritical cases and \cite{PierottiSoave2022} for combined nonlinearities. Systems are considered in \cite{bartsch2016normalized, Ikoma, Jeanjean2024NoDEA, Mederski}. Classical results \cite{berestycki1983nonlinear,jeanjean1997existence,Kwong1989UniquenessOP} guarantee that for any $p\in (2,2^*)$ and $\mu>0,$ there exists a unique radial positive solution for some $\lambda>0$ in $\mathbb{R}^N$. In bounded domains, the existence of normalized solutions is also well studied, for instance, in the unit ball, \cite{noris2015existence} showed that a normalized solution exists for any $\mu>0$ in the subcritical case, and only for small $\mu$ in the supercritical case. Further results on general bounded domains can be found in \cite{pierotti2017normalized, pierotti2025normalized}.

For exterior domains, however, fewer results are available. Notably, \cite{zhang2022normalized} and \cite{appolloni2025normalizedschrodingerequationsmasssupercritical} studied mass subcritical and mass supercritical cases, respectively. Crucially, both works show that when $\alpha=0,$ equation (\ref{1.1})–(\ref{1.2}) does not admit least energy solutions in the mass subcritical regime, nor mountain pass solutions in the mass supercritical regime. When $\alpha = 0,$ the nonlinearity is homogeneous, meaning its strength does not vary with spatial points. This causes the least energy level in \cite{zhang2022normalized} and the mountain pass energy level in \cite{appolloni2025normalizedschrodingerequationsmasssupercritical} coincide with those in $\mathbb{R}^N$. Consequently, any bounded sequence attempting to attain the least energy level or the mountain pass level in an exterior domain will eventually converge (up to translation) to a solution on the whole space rather than $\Omega$. The exterior domain cannot trap the solution and energy leaks to infinity. To overcome the issue of energy level non-attainability, \cite{appolloni2025normalizedschrodingerequationsmasssupercritical,zhang2022normalized} discovered higher energy linking structure by constructing a topologically more complex constraint set to eliminate the effect of energy leakage. 

In contrast, when $\alpha>0,$ we observe a new phenomenon for $\alpha \neq 0$. The physically motivated decaying term $|x|^{-\alpha}$ breaks the scaling symmetry inherent in the autonomous case. Physically, the interaction strength decreases with distance from the origin, creating a natural potential well that traps the solution within the exterior domain. This causes the energy levels in exterior domains to separate strictly from those in the whole space, thereby preventing the energy from leaking to infinity. As a result, we prove that for any sufficiently small mass $\mu>0,$ equations $($\ref{1.1}$)$-$($\ref{1.2}$)$ admit a mountain pass solution as stated in Theorem \ref{main result}.  Such decaying nonlinearities model physically realistic scenarios including non-uniform nonlinear responses, long-range interactions (e.g., Coulomb or dipole potentials), or spatially decaying coupling strengths \cite{Farah2016, Malomed2008}. Regarding the mathematical analysis of equation $($\ref{1.1}$),$ notable contributions have been made by \cite{Alex2021, Bouard2005, CAMPOS2021112118, Gou2023, Liu2006, LUO2022109489}, which extensively study the existence and dynamical behavior of normalized solutions across the whole space $\mathbb{R}^N$. However, to the best of our knowledge, studies in exterior domains remain relatively scarce.

% When $\alpha \neq 0,$ equation $($\ref{1.1}$)$ finds significant applications in quantum mechanics, nonlinear optics, and astrophysics. The nonlinearity $|x|^{-\alpha}|u|^{p-2}u$ may model inhomogeneous nonlinear responses, long-range interactions (such as Coulomb or dipole-dipole potentials), or spatial decay of interaction strength (e.g. modifications to Coulomb or Newtonian potentials). For further physical background and related theoretical discussions, we refer to \cite{Farah2016, Malomed2008}. Regarding the mathematical analysis of equation $($\ref{1.1}$),$ notable contributions have been made by \cite{Alex2021, Bouard2005, CAMPOS2021112118, Gou2023, Liu2006, LUO2022109489}, which extensively study the existence and dynamical behavior of normalized solutions across the whole space $\mathbb{R}^N$. However, to the best of our knowledge, studies in exterior domains remain relatively scarce.

In this paper, we consider the mass supercritical case with $\alpha > 0$. It is straightforward to observe that for any mass $\mu > 0,$ the functional $E$ restricted to $S_\mu$ admits a mountain pass geometry. However, as a key preliminary step toward obtaining a critical point, the existence of a bounded Palais-Smale sequence at the corresponding mountain pass level cannot be directly established. The authors of \cite{borthwick2024bounded} proposed a general abstract framework by building upon the monotonicity trick method developed in \cite{Jeanjean1999,Struwe1988} and the approximate Morse theory introduced in \cite{FangGhoussoub1992,FangGhoussoub1994}. This framework can typically generate a bounded Palais-Smale sequence carrying Morse type information for constrained functionals exhibiting mountain pass geometry. It is particularly useful in situations where Pohozaev type identities either do not exist or fail to provide useful information. Therefore, motivated by \cite{Borthwick_2023,borthwick2024bounded}, we first consider the family of functionals $E_\rho : H_0^1(\Omega)\to \mathbb{R}$ given by 
\begin{equation*}
    E_\rho(u):=\frac{1}{2}\int_\Omega
|\nabla u|^2 dx-\frac{\rho}{p}\int_{\Omega}|x|^{-\alpha}|u|^pdx,\quad \forall u\in H_0^1(\Omega),\forall\rho\in[\frac{1}{2},1].\end{equation*}  

We shall prove that $E_\rho$ possesses a uniform mountain pass structure for all $\rho\in[\frac{1}{2},1]$ (see Theorem \ref{c_rho}), and the mountain pass level denoted by $c_\rho(\mu).$ Instead of attacking directly equation $($\ref{1.1}$)$ under the constraint $($\ref{1.2}$),$ we consider equation $($\ref{1.1}$)$ under a closed ball constraint. This idea of first considering a closed ball constraint is reminiscent of the works \cite{Bieganowski2021,Jeanjean2024NoDEA,Mederski}. However, the strategy of the proofs in \cite{Bieganowski2021,Jeanjean2024NoDEA,Mederski} and in the present paper is essentially distinct. Firstly, we introduce this definition,

\begin{definition}
    For $\mu>0, \rho\in[\frac{1}{2},1],$ a solution $u\in H_0^1(\Omega)$ to 
    \begin{equation}\tag{1.3-$\rho$}\label{1.1rho}
\begin{cases} 
   -\Delta u + \lambda u = \rho{|x|^{-\alpha}}|u|^{p-2} u \quad &\text{in } \Omega, \\
   u = 0 &\text{in } \partial\Omega  
\end{cases}
\end{equation}
is called $(\mu,\rho)$ sub-mass mountain pass solution$,$ if $u\in S_\mu\cup S_\mu^-$ solves $($\ref{1.1rho}$)$ for some $\lambda\in\mathbb{R},$ and $E_\rho(u)=c_\rho(\mu),$ where $S_\mu^-:=\{u\in H_0^1(\Omega):\int_{\Omega}|u|^2 dx< \mu\}.$
\end{definition}

 We shall prove that for any given $\mu>0, \rho\in[\frac{1}{2},1],$ the mountain pass level $c_\rho(\mu)$ is attained by some positive $(\mu,\rho)$ sub-mass solution $u(\mu,\rho)\in H_0^1(\Omega),$ and corresponds to some $\lambda(\mu,\rho)\geq 0.$

\begin{theorem}\label{submass}
    Let $\alpha,\mu>0,$ $N\geq3,$ $2+4/N<p<2^*,$ $\rho\in[\frac{1}{2},1],$ and $\Omega\subset\mathbb{R}^N$ be an exterior domain with $0\notin\overline\Omega$. Then for any $\mu>0,$ $\rho\in[\frac{1}{2},1],$ equation $($\ref{1.1rho}$)$ has a positive $(\mu,\rho)$ sub-mass solution $u(\mu,\rho)$ for some $\lambda(\mu,\rho)\geq 0.$
\end{theorem}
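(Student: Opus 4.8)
The plan is to obtain the solution via the abstract min-max framework of \cite{borthwick2024bounded}, working on the closed ball constraint $\overline{B_\mu} := \{u \in H_0^1(\Omega) : \int_\Omega u^2\,dx \le \mu\}$ rather than directly on $S_\mu$. First I would record the uniform mountain pass geometry of $E_\rho$ on $S_\mu$ (Theorem \ref{c_rho}): because $p > 2 + 4/N$ is $L^2$-supercritical, the fibering map $t \mapsto E_\rho(t \star u)$ (with $(t\star u)(x) = t^{N/2}u(tx)$, suitably adapted so as to respect the exterior domain — here one uses that scaling keeps the support away from the origin since $0 \notin \overline\Omega$, or alternatively a cruder dilation plus cutoff) has a strict local maximum, yielding a mountain pass level $c_\rho(\mu) > 0$ bounded uniformly in $\rho \in [\tfrac12,1]$. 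I would then invoke the monotonicity trick: the map $\rho \mapsto c_\rho(\mu)$ is non-increasing, hence differentiable for a.e. $\rho$, and at each such $\rho$ the framework of \cite{borthwick2024bounded} produces a bounded Palais--Smale sequence $(u_n)$ for $E_\rho|_{S_\mu}$ at level $c_\rho(\mu)$ carrying the extra Morse-index information that its approximate Morse index is at most $1$.

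Next I would pass to the limit. Boundedness of $(u_n)$ in $H_0^1(\Omega)$ gives a weak limit $u$; the associated Lagrange multipliers $\lambda_n$ are bounded (testing the equation against $u_n$ and using the $L^2$-supercritical sign structure, which forces $\lambda_n \ge -o(1)$ and bounds it above), so $\lambda_n \to \lambda$. The inhomogeneous weight $|x|^{-\alpha}$ is bounded on $\Omega$ since $|x| > r_0$ there, and crucially it \emph{decays at infinity}: this is what upgrades the usual Brezis--Lieb / concentration-compactness dichotomy. If a nontrivial portion of the mass escaped to infinity, the corresponding piece would solve a limiting problem with the weight replaced by $0$, i.e. $-\Delta v + \lambda v = 0$, which on $H_0^1$ forces that piece to vanish (for $\lambda \ge 0$); combined with the Morse-index-$\le 1$ constraint ruling out the splitting into two nontrivial bumps, one concludes $u_n \to u$ strongly in $L^p(|x|^{-\alpha}dx)$ and then in $H_0^1(\Omega)$. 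Hence $u$ solves \eqref{1.1rho} with $E_\rho(u) = c_\rho(\mu)$ and $\int_\Omega u^2 \le \mu$, so $u \in S_\mu \cup S_\mu^-$. Positivity follows by replacing $u_n$ with $|u_n|$ throughout (the functional and constraints are even) and applying the strong maximum principle on $\Omega$; $\lambda \ge 0$ comes from the limit of the bound $\lambda_n \ge -o(1)$, or from a direct argument using that the weight decays and $u$ cannot be a bound state of $-\Delta + \lambda$ with $\lambda < 0$.

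The main obstacle I expect is precisely the compactness step: establishing that no mass leaks to infinity for the \emph{approximating} problems $E_\rho$. Unlike the final Theorem \ref{main result}, here one cannot yet exploit a strict energy gap between $\Omega$ and $\mathbb{R}^N$ (that comparison is the heart of the later argument); instead the decay $|x|^{-\alpha} \to 0$ must be used directly to kill any escaping profile, and the Morse-index bound must be used to prevent dichotomy into two separating bumps. Making the blow-up/concentration analysis compatible with the Dirichlet condition on $\partial\Omega$ (so that test functions and rescalings stay in $H_0^1(\Omega)$, which forces cutoffs near $\partial\Omega$ and careful error control) is the technically delicate part. A secondary subtlety is the scaling used to build the mountain pass path: the natural $L^2$-invariant dilation does not preserve $H_0^1(\Omega)$, so one must either scale only in directions that keep mass concentrated far from $\partial\Omega$ or absorb the resulting boundary errors into the geometry estimates uniformly in $\rho$.
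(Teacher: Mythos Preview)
Your proposal correctly sketches the proof of Proposition~\ref{prop3.5} (existence for \emph{almost every} $\rho$), but it stops short of Theorem~\ref{submass}, which asserts existence for \emph{every} $\rho\in[\tfrac12,1]$. The monotonicity trick you invoke produces a bounded Palais--Smale sequence only at the points of differentiability of $\rho\mapsto c_\rho(\mu)$; your compactness argument via the decay of $|x|^{-\alpha}$ then yields a solution at those $\rho$, and this is essentially the content of Proposition~\ref{prop3.5} (with one correction: in the paper the Morse-index information is not used to ``rule out splitting into two bumps'' for the PS sequence --- compactness there comes directly from the decay of the weight --- but rather, combined with Lemma~\ref{lemma3.2} and Lemma~\ref{lemma2.5}, to force $\lambda_\rho\ge0$).

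The genuine gap is the passage from a.e.\ $\rho$ to every $\rho$. For an arbitrary $\rho$ the paper takes $\rho_n\to\rho$ with solutions $u_{\rho_n}$ coming from Proposition~\ref{prop3.5}; now $\{u_{\rho_n}\}$ is \emph{not} a PS sequence produced by the abstract machinery, and there is no a priori bound on $\|u_{\rho_n}\|$. The obstruction is that $\lambda_{\rho_n}$ could blow up. Ruling this out requires the full blow-up analysis of Section~\ref{sec5} (Lemmas~\ref{localmax}--\ref{estimate}): assuming $\lambda_n\to+\infty$, one rescales around local maxima, uses the Morse-index bound $m(u_{\rho_n})\le 2$ to limit the number of concentration points to $k\le 2$, proves exponential decay away from them, and reaches a contradiction by comparing
\[
\lambda_n^{\frac{N}{2}-\frac{2}{p-2}}\int_\Omega u_n^2\,dx = \lambda_n^{\frac{N}{2}-\frac{2}{p-2}}\mu \to +\infty
\]
(since $p>2+4/N$) against the uniform bound coming from the decay estimate. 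This entire step is absent from your outline, and the obstacle you identify (``no mass leaks to infinity for the approximating problems'') is not the real one: leakage is handled cheaply by the weight decay, whereas the blow-up of $\lambda_{\rho_n}$ as $\rho_n\to\rho$ is the substantive issue.
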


 To deal with the lack of compactness, we only need to rule out the case $\lambda(\mu,\rho)=0$. This might typically involve Liouville type results, such as those in \cite{Ikoma,jeanjean2024JMPA}. However, to our knowledge, existing Liouville-type theorems are not applicable to our problem. Therefore, we develop a new approach by studying the asymptotic behavior of sub-mass solutions (or solutions under a closed ball constraint) to analyze compactness. And inspired by \cite{HE2023375}, we shall obtain the following asymptotic behavior for the $(\mu,\rho)$ sub-mass mountain pass solution $u(\mu,\rho)$ in the sense of $C^2_{loc}(\mathbb{R}^N)$ and $H^1(\mathbb{R}^N)$.

 % For $\rho$ fixed, to rule out the case $\lambda=0,$ we study the asymptotic behavior of $u(\mu,\rho)$ as $\mu\to 0.$ 

\begin{theorem}\label{asymtotic}
    Under the assumptions of Theorem \ref{submass}. For any $\mu>0,$ keep $\rho$ constant, let $(u(\mu,\rho),\lambda(\mu,\rho))$ be given by Theorem \ref{submass}$,$ then $\lambda(\mu,\rho)\rightarrow +\infty$ as $\mu\rightarrow 0,$ and there is a $P\in\Omega,$ define   
\[v(\mu,\rho):=\lambda(\mu,\rho)^{-\frac{1}{p-2}} u(\mu,\rho)(\frac{x}{\sqrt{\lambda(\mu,\rho)}}  + P), \quad\text{ for }\  x \in \tilde{\Omega}_{\mu,\rho} := \frac{\Omega - P}{\sqrt{\lambda(\mu,\rho)}},\]
then $v(\mu,\rho)\to V_\rho(x)$ in $C_{loc}^{2}(\mathbb{R}^N)$ and $H^1(\mathbb{R}^N)$ when $\mu\to 0,$
where \(V_\rho\) is the unique positive solution of
\[\left\{\begin{array}[]{ll}-\Delta V_\rho+V_\rho=\rho\Lambda_0V_\rho^{p-1}&\quad\text{ in }\mathbb{R}^{N}, \\ V_\rho(0)=\max\limits_{x\in\mathbb{R}^N}V_\rho,&\\ V_\rho(x)\to 0~{}~{}\text{as}~{}~{}|x|\to+\infty,&\end{array}\right.\]
and $\Lambda_0=|P|^{-\alpha}>0.$
\end{theorem}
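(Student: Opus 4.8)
The plan is to combine energy estimates for $c_\rho(\mu)$ as $\mu\to 0$ with a blow-up (concentration) analysis of the family $\{u(\mu,\rho)\}$. First I would establish the asymptotics of the mountain pass level: using test functions obtained by scaling a fixed profile concentrated at some point $x_0\in\Omega$, together with the $L^2$-supercritical exponent $p>2+4/N$, one shows $c_\rho(\mu)\to +\infty$ as $\mu\to 0$; more precisely $c_\rho(\mu)$ behaves like a negative power of $\mu$. Simultaneously, from the equation $-\Delta u+\lambda u=\rho|x|^{-\alpha}|u|^{p-2}u$ and the sub-mass constraint $\|u\|_2^2\le\mu$, pairing with $u$ gives $\|\nabla u\|_2^2+\lambda\|u\|_2^2=\rho\int|x|^{-\alpha}|u|^p$, while the Pohozaev-type/Nehari information carried by the mountain pass construction (Morse index $\le 1$) controls $\|\nabla u\|_2^2$ in terms of $c_\rho(\mu)$. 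Combining these identities with the Gagliardo–Nirenberg inequality (note $|x|^{-\alpha}\le r_0^{-\alpha}$ on $\Omega$) forces $\lambda(\mu,\rho)\to+\infty$ as $\mu\to0$: otherwise $\|\nabla u\|_2$ and $\|u\|_p$ would stay bounded, contradicting $c_\rho(\mu)\to\infty$.

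Next I would carry out the rescaling. Let $\lambda=\lambda(\mu,\rho)$, pick $P=P_{\mu,\rho}\in\Omega$ to be (close to) a maximum point of $u(\mu,\rho)$, and set $v(x)=\lambda^{-1/(p-2)}u(\lambda^{-1/2}x+P)$ on $\tilde\Omega_{\mu,\rho}=(\Omega-P)/\sqrt\lambda$. A direct computation shows $v$ solves
\begin{equation*}
-\Delta v + v = \rho\,|x/\sqrt\lambda + P|^{-\alpha}\,|v|^{p-2}v \quad\text{in }\tilde\Omega_{\mu,\rho},\qquad v=0\text{ on }\partial\tilde\Omega_{\mu,\rho},
\end{equation*}
and, crucially, $\|v\|_{H^1(\tilde\Omega_{\mu,\rho})}$ stays bounded and bounded away from $0$ (the $H^1$ norm of $v$ is a fixed multiple of a scale-invariant combination of $\|\nabla u\|_2^2$ and $\lambda\|u\|_2^2$, which is comparable to $\lambda^{(\text{power})}c_\rho(\mu)$, hence bounded below; the upper bound again uses Gagliardo–Nirenberg and the Morse-index/energy control). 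Then I must show: (i) the blow-up does not vanish, i.e.\ $\liminf\|v\|_\infty>0$ — this follows from elliptic regularity and the normalization at the maximum point together with the lower $H^1$ bound via a standard Lions-type dichotomy; (ii) the rescaled domains $\tilde\Omega_{\mu,\rho}$ exhaust either $\mathbb{R}^N$ or a half-space as $\mu\to0$; since $0\notin\overline\Omega$ gives $|P|\ge r_0>0$ and $\lambda\to\infty$, the boundary $\partial\tilde\Omega_{\mu,\rho}$ recedes to infinity and one gets the whole space $\mathbb{R}^N$ (the concentration point $P$ cannot approach $\partial\Omega$ at the scale $\lambda^{-1/2}$, otherwise a half-space limit problem would have no positive solution — a Liouville-type contradiction — or one uses that the energy of a half-space solution is strictly larger); and (iii) the coefficient $|x/\sqrt\lambda+P|^{-\alpha}\to|P_0|^{-\alpha}=:\Lambda_0$ locally uniformly, where $P_0=\lim P_{\mu,\rho}$ exists (up to subsequence) in $\overline\Omega$, with $|P_0|\ge r_0>0$ so $\Lambda_0>0$.

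Passing to the limit using $C^2_{loc}$ elliptic estimates, $v(\mu,\rho)\to V_\rho$ solving $-\Delta V_\rho + V_\rho = \rho\Lambda_0 V_\rho^{p-1}$ in $\mathbb{R}^N$ with $V_\rho\ge0$, $V_\rho\not\equiv0$; the maximum-point normalization gives $V_\rho(0)=\max V_\rho$, and the uniform $H^1$ bound plus standard decay estimates for this subcritical equation give $V_\rho(x)\to0$ at infinity and upgrade $C^2_{loc}$ convergence to $H^1(\mathbb{R}^N)$ convergence (no mass is lost: the tails of $v$ are controlled uniformly by the exponential decay of the limit profile, via a comparison/barrier argument). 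Uniqueness of the positive solution (Kwong's theorem, after rescaling out $\rho\Lambda_0$) identifies the limit as the claimed $V_\rho$, so the full family converges without passing to a subsequence.

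The main obstacle I anticipate is step (ii): ruling out that the concentration point $P_{\mu,\rho}$ escapes to the boundary $\partial\Omega$ at rate $O(\lambda^{-1/2})$, which would produce a half-space limit problem. The paper's abstract advertises exactly a "blow-up analysis" combined with Morse index estimates to handle the lack of compactness, so I expect the resolution to use the Morse index $\le1$ of $u(\mu,\rho)$ together with a strict energy inequality: the least mountain-pass energy on a half-space (with Dirichlet condition) strictly exceeds that on $\mathbb{R}^N$, contradicting the level asymptotics $c_\rho(\mu)\sim(\text{whole-space level})\cdot\mu^{-\theta}$. Secondarily, proving the $H^1(\mathbb{R}^N)$ (not merely $C^2_{loc}$) convergence requires care: one needs uniform-in-$\mu$ exponential decay estimates for $v(\mu,\rho)$, obtained by constructing barriers using the lower bound $\lambda\ge\text{const}>0$ in the original equation (equivalently the "$+v$" term with unit coefficient after rescaling).
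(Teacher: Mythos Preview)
Your overall architecture --- show $c_\rho(\mu)\to\infty$, deduce blow-up, rescale at a maximum point, pass to the whole-space limit, upgrade to $H^1$ via exponential decay --- is the paper's. The rescaling computation, the role of the Morse index in excluding the half-space, and the barrier argument for global convergence are all correctly anticipated (the paper packages these as Lemmas~\ref{localmax}, \ref{complication}, \ref{estimate}).

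The genuine gap is in your argument that $\lambda(\mu,\rho)\to+\infty$. You claim that if $\lambda$ stayed bounded then ``$\|\nabla u\|_2$ and $\|u\|_p$ would stay bounded, contradicting $c_\rho(\mu)\to\infty$''. This implication is false. Pairing the equation with $u$ and combining with the energy gives
\[
c_\rho(\mu)=\Bigl(\tfrac12-\tfrac1p\Bigr)\|\nabla u\|_2^2-\tfrac{\lambda}{p}\|u\|_2^2,
\]
so $c_\rho(\mu)\to\infty$ together with $\lambda\|u\|_2^2\le\lambda\mu$ bounded forces $\|\nabla u\|_2\to\infty$, the opposite of what you assert. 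Gagliardo--Nirenberg then yields only the \emph{lower} bound $\|\nabla u\|_2^{p\gamma_p-2}\gtrsim\mu^{-(1-\gamma_p)p/2}$, which matches (in the power of $\mu$) the upper bound on $c_\rho(\mu)$ coming from test functions; the two are compatible for any value of $\lambda$, so no contradiction arises. No usable Pohozaev identity exists on an exterior domain (boundary terms spoil it), and the Morse index bound does not produce one.

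The paper's route is different and more robust. It first notes
\[
c_\rho(\mu)\le \tfrac{p-2}{2p}\,r_0^{-\alpha}\,\bigl(\max_\Omega u\bigr)^{p-2}\mu,
\]
whence $\max_\Omega u\to\infty$ directly. It then rescales by $\tilde\varepsilon_\mu=(\max_\Omega u)^{-(p-2)/2}$ at the maximum point $P_\mu$ and argues by contradiction: if $\lambda$ remained bounded (or vanished) along a subsequence, the term $\lambda\tilde\varepsilon_\mu^{2}\to 0$ drops out of the limit, leaving $-\Delta\tilde u=\Lambda_0\tilde u^{p-1}$ with $\tilde u(0)=1$; the subcritical Liouville theorem forces $\Lambda_0=0$, i.e.\ $|P_\mu|\to\infty$, and a further contradiction with the geometry of the rescaled domain follows. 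Only after $\lambda\to\infty$ is established in this way does the paper invoke Lemma~\ref{complication} (where the Morse index bound $m(u)\le 2$ is used both to pin down $\lambda/(\max u)^{p-2}\to\tilde\lambda\in(0,1]$ and to exclude the half-space limit) to obtain the $C^2_{\mathrm{loc}}$ convergence to $V_\rho$.

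In short: replace your direct energy argument for $\lambda\to\infty$ by the $L^\infty$-blow-up plus Liouville route; the remainder of your plan then goes through essentially as written.
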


  In other words, Theorem \ref{asymtotic} provides an a priori estimate of the Lagrange multiplier $\lambda$ in the setting of the mountain pass. Hence, for small enough $\mu,$ we necessarily have $\lambda>0,$ at this point we shall prove that the nonlinearity $|x|^{-\alpha}|u|^{p-2}u$ ensures compactness. And we have the following result: 
\begin{theorem}\label{main result}
Let $\alpha,\mu>0,$ $N\geq3,$ $2+4/N<p<2^*,$ and $\Omega\subset\mathbb{R}^N$ be an exterior domain with $0\notin\overline\Omega$. Then there is a $\mu_0>0$ such that for any $\mu\in(0,\mu_0),$ equations $($\ref{1.1}$)$-$($\ref{1.2}$)$ has a positive solution for some $\lambda> 0.$  
\end{theorem}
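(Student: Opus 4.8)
\textbf{Proof proposal for Theorem \ref{main result}.}

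The strategy is to combine the three preceding theorems. By Theorem \ref{submass}, for every $\mu>0$ and every $\rho\in[\frac12,1]$ — in particular for the physical choice $\rho=1$ — equation $(\ref{1.1})$ has a positive $(\mu,1)$ sub-mass mountain pass solution $u(\mu):=u(\mu,1)\in H_0^1(\Omega)$, attaining the level $c_1(\mu)$, together with a multiplier $\lambda(\mu):=\lambda(\mu,1)\geq 0$. Since $u(\mu)\in S_\mu\cup S_\mu^-$, to conclude that $u(\mu)$ actually solves the constrained problem $(\ref{1.1})$--$(\ref{1.2})$ it suffices to rule out the possibility $\int_\Omega u(\mu)^2\,dx<\mu$ and, simultaneously, to rule out $\lambda(\mu)=0$. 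I would first argue that $\lambda(\mu)=0$ is impossible for small $\mu$: by Theorem \ref{asymtotic} (with $\rho=1$ fixed), $\lambda(\mu)\to+\infty$ as $\mu\to 0$, so there exists $\mu_1>0$ with $\lambda(\mu)>0$ for all $\mu\in(0,\mu_1)$. With $\lambda(\mu)>0$ in hand, the Nehari/Pohozaev-type manipulation (multiply $(\ref{1.1})$ by $u(\mu)$, integrate, and use $\lambda(\mu)>0$) forces $u(\mu)\not\equiv 0$ and, combined with the structure of the sub-mass constraint and the fact that the problem is posed on the \emph{closed} ball constraint, one obtains that the mass constraint is saturated: $\int_\Omega u(\mu)^2\,dx=\mu$; the point is that a strictly-sub-mass critical point would have to be a free critical point of $E_1$ on $H_0^1(\Omega)$, which for $L^2$-supercritical $p$ and $\lambda\ge0$ cannot sit at the mountain pass level $c_1(\mu)>0$ (a nontrivial solution of $-\Delta u=|x|^{-\alpha}|u|^{p-2}u$ with the wrong sign of the zeroth-order term is excluded, e.g.\ by testing against $u$ and against a Pohozaev multiplier). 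Hence for $\mu\in(0,\mu_0)$ with $\mu_0:=\mu_1$, $u(\mu)\in S_\mu$ solves $(\ref{1.1})$ with $\lambda(\mu)>0$.

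In more detail, the argument excluding $\lambda(\mu)=0$ relies on the asymptotic profile: if along some sequence $\mu_n\to 0$ we had $\lambda(\mu_n)=0$, then the rescaled functions $v(\mu_n,1)$ would, by the blow-up analysis behind Theorem \ref{asymtotic}, have to converge to a nontrivial solution of $-\Delta V=\Lambda_0 V^{p-1}$ on $\mathbb{R}^N$ with finite energy and finite mass — but the energy of $u(\mu_n)$ at level $c_1(\mu_n)$ together with the scaling exponents would produce a contradiction with the sign requirements in the limiting equation $-\Delta V_1 + V_1 = \Lambda_0 V_1^{p-1}$ (the $+V_1$ term encodes precisely $\lambda>0$). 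Concretely, the rescaling in Theorem \ref{asymtotic} is built using $\lambda(\mu,\rho)$ as the scaling parameter, so the statement $v(\mu,\rho)\to V_\rho$ with $V_\rho$ solving an equation with coefficient $+1$ on the linear term is only consistent if $\lambda(\mu)>0$ for $\mu$ small; equivalently, Theorem \ref{asymtotic} already contains the assertion $\lambda(\mu)\to+\infty$, which I would simply invoke.

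Once $\lambda(\mu)>0$ and $u(\mu)\in S_\mu$ are established, it remains to confirm positivity, which is inherited from Theorem \ref{submass} (the sub-mass solution is constructed positive), and is preserved because $\lambda(\mu)>0$ lets us apply the strong maximum principle and Harnack inequality to $-\Delta u(\mu)+\lambda(\mu)u(\mu)=|x|^{-\alpha}u(\mu)^{p-1}\geq 0$ on $\Omega$, giving $u(\mu)>0$ in $\Omega$. Regularity of $u(\mu)$ away from $\partial\Omega$ follows from elliptic bootstrap (the weight $|x|^{-\alpha}$ is smooth and bounded on $\Omega$ since $|x|>r_0>0$ there), so $(u(\mu),\lambda(\mu))$ is a genuine positive solution pair. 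Collecting the above, for every $\mu\in(0,\mu_0)$ the pair $(u(\mu),\lambda(\mu))$ solves $(\ref{1.1})$--$(\ref{1.2})$ with $\lambda(\mu)>0$ and $u(\mu)>0$, which is the assertion of Theorem \ref{main result}.

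The main obstacle is the step that converts the \emph{sub-mass} solution into a genuine normalized solution, i.e.\ simultaneously upgrading $\int_\Omega u(\mu)^2\le\mu$ to equality and ruling out $\lambda(\mu)=0$. The delicate half is excluding $\lambda(\mu)=0$: a naive Pohozaev identity is unavailable or uninformative here (this is exactly why the authors pass through the monotonicity trick and Morse-index machinery), so the exclusion must go through the quantitative blow-up analysis of Theorem \ref{asymtotic} — one needs the rescaled solutions to be compact in $C^2_{loc}\cap H^1$ and to identify the limit as the (unique, nondegenerate) ground state $V_1$, whose defining equation carries a strictly positive linear coefficient. Making the convergence in Theorem \ref{asymtotic} robust enough to feed back the strict positivity $\lambda(\mu)>0$ for all small $\mu$ (not merely along a subsequence) is where the real work lies; granting Theorems \ref{submass}--\ref{asymtotic}, the remainder is standard elliptic regularity and maximum-principle bookkeeping.
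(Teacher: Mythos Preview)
Your high-level strategy --- feed Theorem~\ref{submass} into Theorem~\ref{asymtotic} to get $\lambda(\mu,1)>0$ for small $\mu$, then upgrade the sub-mass solution to a genuine normalized one --- is exactly the paper's route (it is packaged as Proposition~\ref{mainproposition} and then specialized to $\rho=1$). The discrepancy is in the \emph{mechanism} you use for the upgrade.

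Your claim that ``a strictly-sub-mass critical point would have to be a free critical point of $E_1$'' rests on KKT reasoning for optimization on the closed ball $\{\,\|u\|_2^2\le\mu\,\}$. But that is not how $u(\mu,\rho)$ is produced. By Definition~1.1 and the proofs of Proposition~\ref{prop3.5} and Theorem~\ref{submass}, the sub-mass solution is the \emph{weak} $H_0^1$-limit of a Palais--Smale sequence $\{u_n\}\subset S_\mu$, and its multiplier is $\lambda(\mu,\rho)=\lim_n\lambda_n$ with $\lambda_n=-\mu^{-1}E_\rho'(u_n)u_n$. Nothing in this construction forces $\lambda(\mu,\rho)=0$ when $\int_\Omega u(\mu,\rho)^2<\mu$; the limit is not, a priori, a KKT point for a closed-ball constrained problem. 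So as written, your step ``strictly sub-mass $\Rightarrow$ $\lambda=0$'' is unjustified. Your fallback --- ruling out a nontrivial solution of $-\Delta u=|x|^{-\alpha}|u|^{p-2}u$ in $\Omega$ by a Pohozaev identity --- is also not available: on the exterior domain the boundary terms do not cooperate, which is precisely why the paper avoids Pohozaev-type arguments altogether.

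The paper closes the gap differently: it goes back to the approximating sequence on $S_\mu$ and uses the spatial decay of the weight. From the proof of Proposition~\ref{prop3.5}, one has
\[
\int_\Omega|\nabla(u_n-u_\rho)|^2+\lambda_\rho\int_\Omega|u_n-u_\rho|^2
=\rho\int_\Omega|x|^{-\alpha}\bigl(|u_n|^p-|u_\rho|^p\bigr)\,dx+o(1),
\]
and the factor $|x|^{-\alpha}$ (with $\alpha>0$) forces $\int_\Omega|x|^{-\alpha}|u_n|^p\to\int_\Omega|x|^{-\alpha}|u_\rho|^p$ along a subsequence via a local-compactness/tail-smallness splitting. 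When $\lambda_\rho>0$, the left-hand side then gives $u_n\to u_\rho$ \emph{strongly} in $H_0^1(\Omega)$, so the $L^2$-norm is preserved and $u_\rho\in S_\mu$. Proposition~\ref{mainproposition} iterates this: first at the ``a.e.~$\rho$'' level to put $u(\mu,\bar\rho_n)\in S_\mu$, and then once more along $\bar\rho_n\to\rho$ to put $u(\mu,\rho)\in S_\mu$. In short, the implication you need is indeed ``$\lambda>0\Rightarrow$ full mass'', but it comes from the $|x|^{-\alpha}$-induced compactness of the nonlinear term applied to the PS sequence, not from any closed-ball Lagrange-multiplier argument.
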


This paper is structured as follows. In Section \ref{sec2}, we review key abstract results from \cite{Borthwick_2023,borthwick2024bounded} and investigate several of its consequences. In particular, we prove that the approximate Morse information of the bounded Palais-Smale sequence generated by Theorem \ref{monotonous} can be used to derive an upper bound for the Morse index of its weak limit. In Section \ref{sec3}, we prove that the energy functional $E_\rho$ possesses a mountain pass structure for almost every $\rho\in[\frac{1}{2},1]$. Section \ref{sec4} is devoted to proving that for any $\mu>0$ and almost every $\rho\in[\frac{1}{2},1],$ the functional $E_\rho$ possesses a positive $(\mu,\rho)$ sub-mass mountain pass solution of equation $($\ref{1.1rho}$)$. In Section \ref{sec5}, we present blow-up analysis results and complete the proofs of Theorem \ref{submass}, Theorem \ref{asymtotic}, and Theorem \ref{main result}.

\section{Preliminaries}\label{sec2}
Let $(E,\langle\cdot,\cdot\rangle)$ and $(H,(\cdot,\cdot))$ be two \textit{infinite-dimensional} Hilbert spaces and assume that $E\hookrightarrow H\hookrightarrow E^{\prime},$ with continuous injections. For simplicity, we assume that the continuous injection $E\hookrightarrow H$ has norm at most 1 and identify $E$ with its image in $H$. Set
\[
\begin{cases}
\|u\|^{2}=\langle u,u\rangle,  \\
|u|^{2}=(u,u),
\end{cases}\quad u\in E,
\]
and we define for $\mu>0$:
\[
S_{\mu}=\{u\in E \mid |u|^{2}=\mu\}.
\]
 Clearly, $S_{\mu}$ is a smooth submanifold of $E$ of codimension 1. Its tangent space at a given point $u\in S_{\mu}$ can be considered as the closed codimension 1 subspace of $E$ given by:
\[
T_{u}S_{\mu}=\{v\in E \mid (u,v)=0\}.
\]

In the following definition, we denote by $\|\cdot\|_{\mathcal{L}(E,\mathbb{R})}$ and $\|\cdot\|_{\mathcal{L}(E,\mathcal{L}(E,\mathbb{R}))},$ respectively, the operator norm of $\mathcal{L}(E,\mathbb{R})$ and of $\mathcal{L}(E,\mathcal{L}(E,\mathbb{R}))$.

\begin{definition}
Let $\phi:E\rightarrow\mathbb{R}$ be a $C^{2}$-functional on $E$ and $\xi\in\left(0,1\right]$. We say that $\phi^{\prime}$ and $\phi^{\prime\prime}$ are $\xi$-H\"older continuous on bounded sets if for any $R>0$ one can find $M=M(R)>0$ such that, for any $u_{1},u_{2}\in B(0,R):$
\begin{align*}
&\left\|\phi^{\prime}(u_{1})-\phi^{\prime}(u_{2})\right\|_{\mathcal{L}(E,\mathbb{R})} \leq M \|u_{1}-u_{2}\|^{\xi},\\
\ &\left\|\phi^{\prime\prime}(u_{1})-\phi^{\prime\prime}(u_{2})\right\|_{\mathcal{L}(E,\mathcal{L}(E,\mathbb{R}))} \leq M\|u_{1}-u_{2}\|^{\xi}.
\end{align*}
\end{definition}
\begin{definition}
Let $\phi$ be a $C^{2}$-functional on $E$. For any $u\in E,$ we define the continuous bilinear map:
\[
D^{2}\phi(u)=\phi^{\prime\prime}(u)-\frac{\phi^{\prime}(u)\cdot u}{|u|^{2}}(\cdot,\cdot).
\]
\end{definition}

\begin{definition}
Let $\phi$ be a $C^{2}$-functional on $E$. For any $u\in E$ and $\theta>0,$ we define the \textit{approximate Morse index} by
\[
\tilde{m}_{\theta}(u)=\sup\left\{\dim L \,\Big{|}\, \scalebox{0.945}{ $L\underset{subspace}\subset T_{u}S_{|u|^2}: D^{2}\phi(u)[\varphi,\varphi]<-\theta\|\varphi\|^{2}, \ \forall\varphi\in L\setminus\{0\}$} \right\}.
\]
If $u$ is a critical point for the constrained functional $\phi|_{S_{\mu}}$ and $\theta=0,$ we say that this is the \textit{Morse index of $u$ as constrained critical point}.
\end{definition}

\begin{definition}
   Let $\phi$ be a $C^{2}$-functional on $E$. For any critical point $u\in E,$ we define the \textit{Morse index} by
\[
 m(u)=\sup\left\{\dim L \,\Big|\, L\underset{subspace}\subset E\text{ such that}: D^{2}\phi(u)[\varphi,\varphi]<0, \ \forall\varphi\in L\setminus\{0\} \right\}.
\] 
\end{definition}
The following theorem was established in \cite{borthwick2024bounded}.

\begin{theorem}\label{monotonous}
Let \( I \subset (0, +\infty) \) be an interval and consider a family of \( C^2 \) functionals \(\Phi_\rho : E \to \mathbb{R} \) of the form
\[
\Phi_\rho(u) = A(u) - \rho B(u), \quad \rho \in I,
\]
where \( B(u) \geq 0 \) for every \( u \in E ,\) and
\begin{equation}\label{assumption2.2}
\text{either } A(u) \to +\infty \text{ or } B(u) \to +\infty \text{ as } u \in E \text{ and } \|u\| \to +\infty.
\end{equation}
Suppose moreover that \(\Phi'_\rho\) and \(\Phi''_\rho\) are \(\xi\)-H\"older continuous on bounded sets for some \(\xi \in (0, 1]\). Finally, suppose that there exist \( w_1, w_2 \in S_\mu \) (independent of \(\rho\)) such that, setting
\[
\Gamma = \left\{ \gamma \in C([0, 1], S_\mu) \middle| \gamma(0) = w_1, \quad \gamma(1) = w_2 \right\},
\]
we have
\[
c_\rho := \inf_{\gamma \in \Gamma} \max_{t \in [0, 1]} \Phi_\rho(\gamma(t)) > \max\{\Phi_\rho(w_1), \Phi_\rho(w_2)\}, \quad \rho \in I.
\]
Then, for almost every \(\rho \in I,\) there exist sequences \(\{u_n\} \subset S_\mu\) and \(\{\zeta_n\}\subset \mathbb{R}^+\) with \(\zeta_n\to 0\) such that, as \( n \to +\infty ,\)

\smallskip
\((i)\ \Phi_\rho(u_n) \to c_\rho;\)

\smallskip
\((ii)\ \|\Phi'_\rho|_{S_\mu}(u_n)\|_* \to 0;\)

\smallskip
\((iii)\ \{u_n\}\  \text{ is bounded in } E;\)

\smallskip

\((iv)\ \tilde{m}_{\zeta_n}(u_n) \leq 1.\)
\smallskip
\end{theorem}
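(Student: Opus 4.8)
The plan is to fuse two classical devices: the monotonicity trick of Struwe and Jeanjean \cite{Jeanjean1999,Struwe1988}, which upgrades a mountain-pass Palais--Smale sequence to a bounded one for almost every parameter, and the approximate Morse theory of Fang--Ghoussoub \cite{FangGhoussoub1992,FangGhoussoub1994}, which attaches second-order information to min-max critical sequences. First I would record the monotonicity of $\rho \mapsto c_\rho$. Since $B \geq 0$, for $\rho_1 \leq \rho_2$ in $I$ one has $\Phi_{\rho_1}(u) \geq \Phi_{\rho_2}(u)$ for every $u \in E$, hence $c_{\rho_1} \geq c_{\rho_2}$ directly from the definition of the min-max level over the common path class $\Gamma$ (the endpoints $w_1, w_2$ being independent of $\rho$). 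Thus $\rho \mapsto c_\rho$ is non-increasing, and by Lebesgue's theorem it is differentiable at almost every $\rho \in I$. I would fix such a point of differentiability $\rho$ and work there; the exceptional null set is exactly the ``almost every $\rho$'' in the conclusion.

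The heart of the argument is producing the bounded sequence. At a differentiability point $\rho$ I would choose $\rho_n \nearrow \rho$ and near-optimal paths $\gamma_n \in \Gamma$ with $\max_{t} \Phi_{\rho_n}(\gamma_n(t)) \leq c_{\rho_n} + (\rho - \rho_n)$. Writing $B(u) = (\rho - \rho_n)^{-1}[\Phi_{\rho_n}(u) - \Phi_\rho(u)]$ and using that the difference quotient $(c_{\rho_n} - c_\rho)/(\rho_n - \rho)$ converges to $c'_\rho$, I would derive a uniform a priori bound $B(\gamma_n(t_n)) \leq K(\rho)$ along points $t_n$ at which the path nearly realizes its maximum. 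Because $\Phi_\rho = A - \rho B$ stays near $c_\rho$ at these points while $B$ is bounded, $A(\gamma_n(t_n))$ is bounded as well; then assumption \eqref{assumption2.2} forces $\|\gamma_n(t_n)\|$ to remain bounded. A standard deformation argument on the smooth manifold $S_\mu$, or Ekeland's principle applied to $\Phi_\rho|_{S_\mu}$, then yields a bounded sequence $u_n \in S_\mu$ satisfying (i) $\Phi_\rho(u_n) \to c_\rho$, (ii) $\|\Phi'_\rho|_{S_\mu}(u_n)\|_* \to 0$, and (iii) boundedness in $E$.

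To obtain the approximate Morse bound (iv) I would invoke the min-max principle with second-order information of Fang--Ghoussoub, adapted to the constraint $S_\mu$. The essential point is that the min-max family $\Gamma$ is parametrized by the one-dimensional interval $[0,1]$, so the associated min-max class has ``dimension one''; the Fang--Ghoussoub machinery then guarantees a critical sequence whose approximate Morse index, measured by the constrained Hessian $D^2\Phi_\rho(u_n)$ against the thresholds $\theta = \zeta_n \to 0$, satisfies $\tilde{m}_{\zeta_n}(u_n) \leq 1$. This step is where the $C^2$ regularity and the $\xi$-H\"older continuity of $\Phi'_\rho$ and $\Phi''_\rho$ on bounded sets are indispensable: they let the deformation lemmas be run with controlled second-order pseudo-gradient flows on a neighborhood of the bounded sequence, so that the index count is stable under the approximation. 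The care required is to carry out this construction within the bounded region isolated in the previous step, so that a single sequence carries both the boundedness (iii) and the index bound (iv).

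The main obstacle I anticipate is the simultaneous reconciliation of the two mechanisms. The monotonicity trick is intrinsically a first-order ($C^1$) argument producing a bounded Palais--Smale sequence, whereas the Morse-index control is a second-order ($C^2$) construction; fusing them demands a deformation on $S_\mu$ that (a) respects the $L^2$-constraint and the Lagrange-multiplier term built into $D^2\Phi_\rho$, (b) keeps the flow inside the bounded set where the $B$-bound holds, and (c) preserves the approximate index count as $\zeta_n \to 0$. Verifying that the Fang--Ghoussoub pseudo-gradient flow can be localized to this bounded region without destroying either the energy convergence or the second-order estimate is, I expect, the delicate technical core of the proof.
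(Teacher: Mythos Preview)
The paper does not prove this theorem at all: it is quoted verbatim as an external result, introduced by ``The following theorem was established in \cite{borthwick2024bounded}.'' There is therefore no in-paper proof to compare your proposal against.

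That said, your outline is essentially the correct architecture of the proof in \cite{borthwick2024bounded}: monotonicity of $\rho\mapsto c_\rho$ gives a.e.\ differentiability, near-optimal paths at a differentiability point yield a uniform bound on $B$ and hence (via \eqref{assumption2.2}) on the norm, and the Fang--Ghoussoub second-order min-max principle over a one-dimensional parameter space furnishes the approximate Morse bound $\tilde m_{\zeta_n}(u_n)\le 1$. You have also correctly located the genuine technical difficulty---running the $C^2$ deformation simultaneously with the localization to the bounded region picked out by the monotonicity trick---which is precisely the novelty of \cite{borthwick2024bounded} over the separate classical ingredients. For the purposes of the present paper, however, no proof is expected: the result is simply invoked as a black box.
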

It is easily observed, see \cite[Remarks 1.3]{borthwick2024bounded}, from Theorem \ref{monotonous} $(ii)-(iii)$ that
\[
\Phi_{\rho}^{\prime}(u_{n}) + \lambda_{n}(u_{n}, \cdot) \to 0 \text{ in } E^{\prime} \text{ as } n \to +\infty
\]
where we have set
\[
\lambda_{n} := -\frac{1}{\mu}(\Phi_{\rho}^{\prime}(u_{n}) \cdot u_{n}).
\] 
Also, Theorem \ref{monotonous} $(iv)$ directly implies that if there exists a subspace \(W_{n} \subset T_{u_{n}}S_{\mu}\) such that
\[
D^{2}\Phi_{\rho}(u_{n})[w, w] = \Phi_{\rho}^{\prime\prime}(u_{n})[w, w] + \lambda_{n}(w, w) < -\zeta_{n}||w||^{2}, \quad \text{for all } w \in W_{n} \setminus \{0\},
\] 
then necessarily \(\dim W_{n} \leq 1\). We called the sequence \(\{\lambda_{n}\} \subset \mathbb{R}\) the sequence of \textit{almost Lagrange multipliers}.

We shall see if \(\{u_{n}\} \subset S_{\mu}\) converges to some \(u \in S_{\mu}\) then information on the Morse index of \(u\) as constrained critical point can be obtained. Firstly, similar to \cite[Lemma 2.5]{Borthwick_2023}, we obtain results in the high-dimensional case, which are used to derive information about the sequence of \textit{almost Lagrange multipliers} $\{\lambda_n\}$.

\begin{lemma}\label{lemma2.5} Let \(\{u_n\} \subset S_\mu,\) \(\{\lambda_n\} \subset \mathbb{R}\) and \(\{\zeta_n\} \subset \mathbb{R}^+\) with \(\zeta_n \to 0\). Assume that the following conditions hold$:$
\begin{enumerate}
\item[(i)] For large enough \(n \in \mathbb{N},\) all subspaces \(W_n \subset E\) with the property  
\begin{equation}\label{2.7}
\Phi_\rho''(u_n)[\varphi, \varphi] + \lambda_n |\varphi|^2 < -\zeta_n \|\varphi\|^2, \quad \text{for all } \varphi \in W_n \setminus \{0\},
\end{equation}
satisfy: \(\dim W_n \leq 2\).

\item[(ii)] There exist \(\lambda \in \mathbb{R},\) a subspace \(Y\) of \(E\) with \(\dim Y \geq 3\) and \(a > 0\) such that, for large enough \(n \in \mathbb{N},\)  
\begin{equation}\label{2.8}
\Phi_\rho''(u_n)[\varphi, \varphi] + \lambda|\varphi|^2 \leq -a\|\varphi\|^2, \quad \text{for all } \varphi \in Y.
\end{equation}
\end{enumerate}
Then \(\lambda_n > \lambda\) for all large enough \(n \in \mathbb{N}\). In particular, if $($\ref{2.8}$)$ holds for any \(\lambda < 0,\) then \(\liminf_{n \to \infty} \lambda_n \geq 0\).
\end{lemma}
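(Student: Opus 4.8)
\emph{Proof idea.} The plan is a direct argument by contradiction, following the scheme of \cite[Lemma 2.5]{Borthwick_2023}. Suppose the conclusion fails, so that $\lambda_n \le \lambda$ along some subsequence, which I will not relabel. Since $|\varphi|^2 \ge 0$ for every $\varphi \in E$, one has $\lambda_n|\varphi|^2 \le \lambda|\varphi|^2$, and combining this with hypothesis (ii) gives, for all large $n$,
\[
\Phi_\rho''(u_n)[\varphi,\varphi] + \lambda_n|\varphi|^2 \ \le\ \Phi_\rho''(u_n)[\varphi,\varphi] + \lambda|\varphi|^2 \ \le\ -a\|\varphi\|^2, \qquad \forall\, \varphi \in Y.
\]
Because $\zeta_n \to 0$, there is $n_0$ with $\zeta_n < a$ for $n \ge n_0$, and for such $n$ the right-hand side is $< -\zeta_n\|\varphi\|^2$ for every $\varphi \ne 0$. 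Hence, for all large $n$, the subspace $Y$ itself satisfies the defining inequality $($\ref{2.7}$)$ of the admissible subspaces $W_n$, while $\dim Y \ge 3 > 2$; this contradicts hypothesis (i). Therefore $\lambda_n > \lambda$ for all large $n$, which is the first claim.

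For the last assertion, assume $($\ref{2.8}$)$ holds for every $\lambda < 0$ (with $a$, $Y$ and the threshold $n_0$ allowed to depend on $\lambda$). The step above then yields, for each fixed $\lambda < 0$, that $\lambda_n > \lambda$ for all $n$ large enough, so $\liminf_{n\to\infty}\lambda_n \ge \lambda$. Letting $\lambda \uparrow 0$ gives $\liminf_{n\to\infty}\lambda_n \ge 0$.

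I do not expect a genuine obstacle in this lemma: it is elementary, and the only point requiring a small amount of care is that the several ``for large enough $n$'' thresholds — coming from (i), from (ii), and from the requirement $\zeta_n < a$ — are finite in number for a fixed $\lambda$, hence can be imposed simultaneously. Conceptually this is the bookkeeping device that transforms the approximate-Morse-index control of $\{u_n\}$ (namely, a bound on the dimension of subspaces on which $D^2\Phi_\rho(u_n)+\lambda_n(\cdot,\cdot)$ is strongly negative, as in Theorem \ref{monotonous}$(iv)$) together with a coercivity-type lower bound for $-\Phi_\rho''(u_n)$ on a fixed subspace of dimension at least $3$ into a quantitative lower bound on the almost Lagrange multipliers $\{\lambda_n\}$, which is precisely what will later be used to conclude $\lambda(\mu,\rho)\ge 0$ in the limit.
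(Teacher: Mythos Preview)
Your proof is correct and is precisely the argument the paper has in mind: the paper does not spell out a proof of this lemma but simply refers to \cite[Lemma 2.5]{Borthwick_2023}, whose proof is exactly the contradiction-and-comparison argument you wrote. There is nothing to add.
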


Next, we prove that under suitable conditions, we only need weak convergence to derive the Morse index information of the weak limit. Independently, L.Appolloni and R.Molle (2025) recently demonstrated a similar result in the concrete case of \cite[Proposition 3.5]{appolloni2025normalizedschrodingerequationsmasssupercritical}. Our work in Theorem \ref{keepappromaxiateindex} reveals that the core insight applies more generally. 

For ease of presentation, we make the following assumptions.
Let $S$ be a densely embedded subspace of $E$ and $\Phi\in 
 C^2(E,\mathbb{R}),$ 
 \begin{enumerate}
     \item[$(C)$] $\forall w\in S,$ $\Phi^{\prime}(\cdot)(w)$ and
 $ \Phi^{\prime\prime}(\cdot)[w,w]$ are weakly sequentially continuous,
 \item[$(U)$] $ \Phi^{\prime}(u)(\bar{w}_m)\xrightarrow{m \to +\infty} \Phi^{\prime}(u)(w)$ and $ \Phi^{\prime\prime}(u)[\bar{w}_m,\bar{w}_m]\xrightarrow{m \to +\infty} \Phi^{\prime\prime}(u)[w,w]$ hold uniformly on any bounded set when $\{\bar{w}_m\}_m\subset S$ is a dense approximation of $w\in E,$ $i.e.,$ $w=\lim\limits_{m\to\infty}\bar{w}_m.$
 \end{enumerate}
\begin{theorem}  \label{keepappromaxiateindex}   
Let $\mu>0,$ $S$ be a densely embedded subspace of $E,$ and $\Phi\in 
 C^2(E,\mathbb{R})$ satisfying $(C)$ and $(U)$.
   Assume $\{u_{n}\}\subset S_\mu$ is a bounded sequence such that 
\begin{equation}\label{2.4}
    \Phi^{\prime}(u_{n}) + \lambda_{n}(u_{n}, \cdot) \xrightarrow{n \to +\infty} 0 \text{ in } E^{\prime} \text{ where }\lambda_n= -\frac{1}{|u_n|^2}(\Phi^{\prime}(u_n),u_n)
\end{equation}
and $u_{n}\rightharpoonup u\not=0$ on $E$. If there exists a positive sequence $\{\zeta_{n}\}$ with $\zeta_{n}\xrightarrow{n\rightarrow \infty}0,$ such that $\tilde{m}_{\zeta_{n}}(u_{n})\leq M$. Then $\tilde{m}_{0}(u)\leq M.$ 
\begin{proof}
Otherwise, there exists a $W_{0}\subset T_{u}S_{|u|^2}$ with $\dim W_{0}=M+1$ such that
\begin{align*}
  D^{2}\Phi(u)[w,w]<0,\  \forall w\in W_{0}\backslash\{0\}.  
\end{align*}
Since the dimension of $W_{0}$ is finite and $D^{2}\Phi(u)$ is bilinear, we deduce that there exists $\beta>0$ such that 
\begin{align*}
    D^{2}\Phi(u)[w,w]<-\beta ||w||^{2},\ \forall w\in W_{0}\backslash\{0\}.
\end{align*}
$D^{2}\Phi(u)[w,w]= \Phi^{\prime\prime}(u)[w, w] + \lambda(w, w),$ where $\lambda=-\frac{1}{|u|^2}(\Phi^{\prime}(u),u)$. Since $\{u_n\}$ is bounded and $\Phi\in C^2,$ $\lim\limits_{n\rightarrow\infty}\lambda_n$ exists up to a subsequence. By condition $(C),$ $(U)$ and $($\ref{2.4}$),$ extracting a subsequence if necessary we have 
\begin{align*}
\Phi^{\prime}(u) + (\lim\limits_{n\to\infty}\lambda_{n})(u, \cdot) = 0 \text{ in } E^{\prime},\  \text{and}\ (\Phi^{\prime\prime}(u_n)-\Phi^{\prime\prime}(u))[w, w]\xrightarrow{n\to\infty} 0,
\end{align*}
thus $\lambda=\lim\limits_{n\to\infty}\lambda_{n}.$ 
And for $n\in \mathbb{N}$ large enough,
\begin{align}   
&D^{2}\Phi(u_{n})[w,w]=\Phi^{\prime\prime}(u_n)[w, w] + \lambda_n(w, w)\notag\\
&=\Phi^{\prime\prime}(u)[w, w] + \lambda(w, w)+(\Phi^{\prime\prime}(u_n)-\Phi^{\prime\prime}(u))[w, w] + (\lambda_n-\lambda)(w, w)\notag\\
&\leq -\frac{\beta}{2}||w||^{2}, \forall w\in W_{0}\backslash\{0\}\label{wbeta}.
\end{align}
For any $w\in W_0\backslash\{0\},$ there is a decomposition 
\begin{align}\label{decomposition}
    w=w_{n}+l_{n}u_{n},\quad\text{where}\quad w_{n}\in T_{u_{n}}S_{|u_n|^2},l_{n}\in \mathbb{R}.
\end{align}  $u_{n}$ weakly converges to $u$ in $H,$ so we have \begin{equation}\label{lnto0}l_{n}=\frac{(w,u_n)}{|u_n|^2}\rightarrow 0,\end{equation} and hence  
\begin{equation}\label{wntow}||w-w_{n}||^2=l_n^2||u_n||^2\xrightarrow{n\rightarrow \infty} 0,\quad i.e.,\  w_n\to w \quad\text{in} \quad E.\end{equation}  
For $n$ large enough, define the projection mapping
\begin{align*}
    P_n: E&\to T_{u_{n}}S_{|u_n|^2}\\
    u&\mapsto P_nu:(P_nu,u_n)=0,
\end{align*}
and consider the restricted mapping:
\begin{align*}
    P_n|_{W_0}: W_0&\to P_n W_0:=W_n\\
    w&\mapsto P_nw=w_n.
\end{align*}
 It is easy to observe that $P_n|_{W_0}$ is a surjection. For $w^1\not=w^2$ in $W_0,$ we have decompositions 
 \begin{align*}
     w^i=P_nw^i+l_n^iu_n,\ i=1,2,
 \end{align*}
  hence \begin{align*}
     0=(w^1,u)=(P_nw^1,u)+l_n^1(u_n,u),\\
     0=(w^2,u)=(P_nw^2,u)+l_n^2(u_n,u),
 \end{align*}
if $P_nw^1=P_nw^2,$ then $l_n^1(u_n,u)=l_n^2(u_n,u),$ and because $(u_n,u)\to|u|^2\not=0,$ so for $n$ large enough, we have $l_n^1=l_n^2$ if $P_nw^1=P_nw^2.$ That is, $P_n|_{W_0}$ is injection for $n$ large enough.
 So for $n $ large enough, the mapping $P_n|_{W_0}$ defines a bijection from $W_{0}$ to the subspace $W_{n}=P_nW_0$ of $T_{u_{n}}S_{\mu},$ so $\dim W_{n}=M+1$. By $($\ref{wbeta}$),$ $($\ref{decomposition}$),$ $($\ref{lnto0}$)$ and $($\ref{wntow}$),$ we deduce that
\begin{align*}
    D^{2}\Phi(u_{n})[w_{n},w_{n}]&=D^{2}\Phi(u_{n})[w,w]-2l_{n}D^{2}\Phi(u_{n})[w_{n},u_{n}]\\
    &\quad+(l_{n})^{2}D^{2}\Phi(u_{n})[u_{n},u_{n}]\\
    &\leq -\frac{\beta}{2}||w||^{2}-2l_{n}D^{2}\Phi(u_{n})[w_{n},u_{n}]+(l_{n})^{2}D^{2}\Phi(u_{n})[u_{n},u_{n}]\\
    &= -\frac{\beta}{2}(||w_{n}||^{2}+2l_{n}\langle w_{n},u_{n}\rangle+(l_{n})^{2}||u_{n}||^{2})\\
    &  \quad    -2l_{n}D^{2}\Phi(u_{n})[w_{n},u_{n}]+(l_{n})^{2}D^{2}\Phi(u_{n})[u_{n},u_{n}]\\
    &\leq -\frac{\beta}{4}||w_{n}||^{2}, \ \forall w_{n}\in W_{n}\backslash\{0\}.
\end{align*}
It contradicts $\tilde{m}_{\zeta_{n}}(u_{n})\leq M.$ Thus, we infer $\tilde{m}_{0}(u)\leq M.$ 
\end{proof}
\end{theorem}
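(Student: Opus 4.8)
The plan is to argue by contradiction: suppose $\tilde m_0(u) \geq M+1$, pick a subspace $W_0 \subset T_u S_{|u|^2}$ of dimension $M+1$ on which $D^2\Phi(u)$ is negative definite, upgrade this (finite dimension, bilinearity) to $D^2\Phi(u)[w,w] \leq -\beta\|w\|^2$ on $W_0$ for some $\beta > 0$, and then ``transport'' the subspace to the functions $u_n$: construct, for $n$ large, an $(M+1)$-dimensional subspace $W_n \subset T_{u_n}S_\mu$ on which $D^2\Phi(u_n) < -\zeta_n\|\cdot\|^2$, in contradiction with $\tilde m_{\zeta_n}(u_n) \leq M$.

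The first ingredient is convergence of the data along the sequence. Since $\{u_n\}$ is bounded and $\Phi$ is $C^2$, $\{\lambda_n\}$ is bounded; testing $($\ref{2.4}$)$ against a fixed $w \in S$, using the weak sequential continuity $(C)$ of $\Phi'(\cdot)(w)$ and $(u_n,w)\to(u,w)$, then passing to the limit and using density of $S$ in $E$, I would identify $\lambda_n \to \lambda := -\frac{1}{|u|^2}(\Phi'(u),u)$ (first along a subsequence, then for the full sequence, the limit being forced by testing against $u$). The second ingredient is $\Phi''(u_n)[w,w] \to \Phi''(u)[w,w]$ for $w \in E$: this is where $(U)$ is needed, since $w$ ranges over $W_0$, which need not lie in $S$ — a three-$\epsilon$ argument using $(U)$ for the uniform-in-$n$ tail and $(C)$ for the fixed-$m$ step does it. Because $W_0$ is finite-dimensional, both convergences become uniform on the unit sphere of $W_0$, so $D^2\Phi(u_n)[\cdot,\cdot] \to D^2\Phi(u)[\cdot,\cdot]$ uniformly there.

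Next I would project $W_0$ onto the tangent spaces $T_{u_n}S_\mu$: for $w \in W_0$ set $w_n := w - \frac{(w,u_n)}{\mu}u_n$, so $w = w_n + l_n u_n$ with $l_n := \frac{(w,u_n)}{\mu}$. Since $(w,u)=0$ and $u_n \rightharpoonup u$ in $H$, $l_n \to 0$ — uniformly over $\{\|w\|=1\}\cap W_0$ by finite-dimensionality — hence $\|w-w_n\| \to 0$ uniformly. A short argument (pairing the decompositions of two elements with $u$ and using $(u_n,u)\to|u|^2\neq 0$) shows $P_n|_{W_0}$ is injective for $n$ large, so $W_n := P_n(W_0)$ still has dimension $M+1$. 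Expanding $D^2\Phi(u_n)[w,w]$ and $\|w\|^2$ along $w = w_n + l_n u_n$ and solving for $D^2\Phi(u_n)[w_n,w_n]$, every remainder term carries a factor $l_n$ multiplying a quantity bounded uniformly in $n$ ($D^2\Phi(u_n)[w_n,u_n]$, $D^2\Phi(u_n)[u_n,u_n]$, $\langle w_n,u_n\rangle$, $\|u_n\|$ — all bounded since $\Phi\in C^2$ and $\{u_n\},\{w_n\},\{\lambda_n\}$ are bounded), so these remainders are $o(1)$ uniformly on the sphere of $W_0$; together with $D^2\Phi(u_n)[w,w] \leq -\beta\|w\|^2 + o(1)$ and $\|w_n\|\to\|w\|$ this yields $D^2\Phi(u_n)[w_n,w_n] \leq -\frac{\beta}{4}\|w_n\|^2$ on $W_n\setminus\{0\}$ for $n$ large (homogeneity reduces to the sphere). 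As $\zeta_n\to 0$ is eventually $<\frac{\beta}{4}$, the subspace $W_n$ contradicts $\tilde m_{\zeta_n}(u_n)\leq M$; hence $\tilde m_0(u)\leq M$.

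The step I expect to be the main obstacle is turning the merely pointwise information coming from weak convergence (pointwise smallness of $l_n$; pointwise convergence $\Phi''(u_n)[w,w]\to\Phi''(u)[w,w]$) into bounds that hold uniformly over the fixed subspace $W_0$, and — in the same breath — making sure the transported subspace $W_n$ does not drop dimension. Finite-dimensionality of $W_0$ (compactness of its unit sphere, equivalence of norms, bilinearity/homogeneity) is what makes this possible, and the hypotheses $(C)$ and $(U)$ are precisely the bridge from the dense test space $S$, where weak sequential continuity is available, to an arbitrary direction $w \in W_0 \subset E$.
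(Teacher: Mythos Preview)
Your proposal is correct and follows essentially the same argument as the paper: contradiction via a negative subspace $W_0$ of dimension $M+1$, the upgrade to uniform negativity by finite-dimensionality, convergence $\lambda_n\to\lambda$ and $\Phi''(u_n)[w,w]\to\Phi''(u)[w,w]$ via $(C)$ and $(U)$, the projection $P_n$ onto $T_{u_n}S_\mu$ with injectivity checked using $(u_n,u)\to|u|^2\neq 0$, and the expansion of $D^2\Phi(u_n)[w_n,w_n]$ along $w=w_n+l_n u_n$ to obtain the bound $-\tfrac{\beta}{4}\|w_n\|^2$. Your write-up is in fact slightly more explicit than the paper's about the role of finite-dimensionality in securing the uniform estimates on the sphere of $W_0$, but the route is the same.
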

\begin{theorem}\label{unot=0}
    Let $S$ be a densely embedded subspace of $E$ and $\Phi$ be a $C^2-$ functional on $E$ of the form $\Phi(u)=A(u)-B(u)$ which $A(u)$ and $B(u)$ satisfy $(C)$ and $(U),$ we make the following assumption:
\begin{enumerate}
    \item [$(A)$]there is a $\alpha_1>0$ such that $0\leq A(u)\leq \alpha_1 A^\prime(u)u$ and $A^\prime(\cdot)\cdot:E\to \mathbb{R}$ is weakly lower semicontinuous, i,e, $u_n\rightharpoonup u$ in $E$ implies that $A^\prime(u)u\leq\liminf\limits_{n\to\infty}A^\prime(u_n)u_n.$
    \item [$(B)$]$B(u)\geq0,$ and $B^\prime(\cdot)\cdot \ : E\to \mathbb{R}$ is weakly sequentially continuous, i,e, $u_n\rightharpoonup u$ in $E$ implies that $B^\prime(u_n)u_n\to B^\prime(u)u$ as $n\to\infty.$
\end{enumerate}
If there are a bounded sequence $\{u_n\}$ and a non-negative  bounded sequence $\{\lambda_n\}$ such that 
   \begin{align}\label{2.4again}
   c:=\lim\limits_{n\to\infty}\Phi(u_n)>0 \quad\text{and} \quad 
  \Phi^{\prime}(u_{n}) + \lambda_{n}(u_{n}, \cdot) \xrightarrow{n \to +\infty} 0 \text{ in } E^{\prime}.
\end{align}
Then there is a $u\not=0$ such that $u_n\rightharpoonup u$ as $n\to \infty.$
\begin{proof}
    $u_n$ is bounded, so there is $u\in E$ such that $u_n\rightharpoonup u,$ we just need to prove $u\not=0.$ The condition $($\ref{2.4again}$)$ implies that 
    \[
    A^{\prime}(u_{n})u_n + \lambda_{n}(u_{n}, u_n)-B^{\prime}(u_{n})u_n\rightarrow0
    \]
    and for some $\lambda\geq0,$ it holds that
    \[A^{\prime}(u) + \lambda(u, \cdot)-B^{\prime}(u)=0\quad \text{in}\quad E^\prime.\]
    Then considering condition $(A),$ for $n$ large enough, it holds that 
    \begin{align}\label{B}
        0\leq A^{\prime}(u_{n})u_n-A^{\prime}(u)u + \lambda(u_n-u, u_n-u)&=B^{\prime}(u_{n})u_n-B^{\prime}(u)u+o(1),
    \end{align}
    the right side of $($\ref{B}$)$ tends to 0 by the assumption condition $(B),$ so we deduce that $A^{\prime}(u_{n})u_n\to A^{\prime}(u)u.$
And then \begin{equation*}
        0<c=\lim\limits_{n\to \infty} A(u_n)-B(u_n)\leq \alpha_1 A^\prime(u_n)u_n\to \alpha_1A^\prime(u)u,
    \end{equation*}
    so $u\not=0,$ we complete this proof.
\end{proof}
\end{theorem}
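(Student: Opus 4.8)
The plan is to extract a weak limit of $\{u_n\}$ and show it is nontrivial by playing the tested approximate equation against the limiting equation. Since $\{u_n\}$ is bounded in $E$, after passing to a subsequence I may assume $u_n \rightharpoonup u$ in $E$, hence also $u_n \rightharpoonup u$ in $H$; since $\{\lambda_n\}$ is bounded and nonnegative, I may likewise assume $\lambda_n \to \lambda$ for some $\lambda \geq 0$. First I would pass to the limit in $\Phi'(u_n) + \lambda_n(u_n,\cdot) \to 0$ tested against a fixed $w \in S$: using the weak sequential continuity of $A'(\cdot)w$ and $B'(\cdot)w$ from $(C)$ together with $(u_n,w) \to (u,w)$, one obtains $A'(u)w + \lambda(u,w) - B'(u)w = 0$ for every $w \in S$. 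By density of $S$ in $E$ and continuity of $A'(u)$, $B'(u)$ and $(u,\cdot)$, this extends to $A'(u) + \lambda(u,\cdot) - B'(u) = 0$ in $E'$, and testing against $u$ gives $A'(u)u + \lambda|u|^2 = B'(u)u$.

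Next I would test the approximate equation against $u_n$ itself: $A'(u_n)u_n + \lambda_n|u_n|^2 - B'(u_n)u_n \to 0$. Because $|u_n|^2$ is bounded and $\lambda_n \to \lambda$, one may replace $\lambda_n$ by $\lambda$ modulo $o(1)$; combined with the splitting $|u_n|^2 = |u_n - u|^2 + |u|^2 + o(1)$, which follows from $u_n \rightharpoonup u$ in $H$, subtracting the limiting identity yields
\[
(A'(u_n)u_n - A'(u)u) + \lambda|u_n - u|^2 = (B'(u_n)u_n - B'(u)u) + o(1).
\]
The right-hand side tends to $0$ by the weak sequential continuity of $B'(\cdot)\cdot$ in $(B)$, while on the left the bracket has nonnegative $\liminf$ by the weak lower semicontinuity of $A'(\cdot)\cdot$ from $(A)$ and $\lambda|u_n - u|^2 \geq 0$; hence both left-hand terms must vanish in the limit, and in particular $A'(u_n)u_n \to A'(u)u$.

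Finally, since $B(u_n) \geq 0$ from $(B)$, the hypothesis $c = \lim \Phi(u_n) > 0$ gives $\liminf A(u_n) \geq c > 0$, and then $A(u_n) \leq \alpha_1 A'(u_n)u_n$ from $(A)$ yields $\alpha_1 A'(u)u = \lim \alpha_1 A'(u_n)u_n \geq c > 0$; since $A'(0)\cdot 0 = 0$, this forces $u \neq 0$. I expect the main obstacle to be the second step: in the exterior-domain application the embedding $E \hookrightarrow H$ is \emph{not} compact, so one cannot pass to the limit in the $\lambda_n|u_n|^2$ term via strong convergence, nor expect $|u_n|^2 \to |u|^2$. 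The argument is designed to avoid this entirely — the cross term is killed by the Brezis--Lieb-type identity in $H$, and nontriviality of $u$ is recovered purely from the sign/coercivity structure ($A' \geq 0$ and weakly lower semicontinuous, $\lambda \geq 0$, $B \geq 0$) together with the quantitative input $c > 0$. The care then lies in checking that $(C)$ plus density genuinely license the term-by-term passage to the limit, and that freezing $\lambda_n$ and subtracting the two scalar identities is legitimate.
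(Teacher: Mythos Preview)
Your proof is correct and follows essentially the same approach as the paper: extract a weak limit, derive the limiting Euler--Lagrange identity, subtract it from the approximate identity tested at $u_n$ to obtain $(A'(u_n)u_n - A'(u)u) + \lambda(u_n-u,u_n-u) = B'(u_n)u_n - B'(u)u + o(1)$, and conclude via the sign structure and $c>0$. Your write-up is in fact more careful than the paper's in justifying the passage to the limiting equation via $(C)$ and density, and in phrasing the key inequality through $\liminf$ rather than claiming it holds for all large $n$.
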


\section{Mountain pass solution for approximating problems }\label{sec3}
In the context of this paper, we have $E=H_0^{1}\left(\Omega\right)$ and $H=L^{2}\left(\Omega\right)$.
In the first part of the section, we collect some facts concerning the NLS equation, which is inspired by \cite{bartsch2016normalized}. Let us consider the problem
\begin{equation}\label{tag2.1}
\begin{cases}
-\Delta w + w = w^{p-1} & \text{in } \mathbb{R}^N,\\
w > 0 & \text{in } \mathbb{R}^N, \\
w(0) = \max_{\mathbb{R}^N} w \quad \text{and} \quad w \in H^1(\mathbb{R}^N).
\end{cases}
\end{equation}
For $p\in (2,2^*)$ fixed.
It is well known that $($\ref{tag2.1}$)$ has a unique solution which is radial,  denoted by \( w_0 \). In what follows, we set
\begin{equation}\label{tag2.2}
C_0 := \int_{\mathbb{R}^N} w_0^2 \quad \text{and} \quad C_1 := \int_{\mathbb{R}^N} w_0^p.
\end{equation}

\begin{lemma}
    $\int_{\mathbb{R}^N}|\nabla w_{0}|^2=\gamma_pC_1,$ where $\gamma_p=\frac{N(p-2)}{2p}.$
    \begin{proof}
        Combing $($\ref{tag2.1}$)$ and the Pohozaev identity, the identities hold
        $$\begin{cases}
           \int_{\mathbb{R}^N} |\nabla w_0|^2+\int_{\mathbb{R}^N} w_0^2=\int_{\mathbb{R}^N} w_0^p,\\
           \frac{N-2}{2}\int_{\mathbb{R}^N} |\nabla w_0|^2+\frac{N}{2}\int_{\mathbb{R}^N} w_0^2=\frac{N}{p}\int_{\mathbb{R}^N} w_0^p.
        \end{cases}$$
        So $ \int_{\mathbb{R}^N} |\nabla w_0|^2=\gamma_p\int_{\mathbb{R}^N} w_0^p.$ 
    \end{proof}
\end{lemma}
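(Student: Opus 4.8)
The plan is to combine two classical integral identities satisfied by the ground state $w_0$ — the Nehari identity, obtained by testing the equation with $w_0$ itself, and the Pohozaev identity, obtained from the dilation $w_0(\cdot/t)$ — into precisely the linear system displayed in the statement, and then eliminate the term $\int_{\mathbb{R}^N}w_0^2$ between the two equations.

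First I would recall that, by elliptic regularity, the unique positive radial solution $w_0\in H^1(\mathbb{R}^N)$ of (\ref{tag2.1}) is smooth and that it together with its gradient decays exponentially at infinity; this guarantees that all the integrals below are finite and that the integrations by parts and the cancellation of boundary terms at infinity are legitimate. Multiplying $-\Delta w_0+w_0=w_0^{p-1}$ by $w_0$ and integrating over $\mathbb{R}^N$ yields the Nehari identity $\int_{\mathbb{R}^N}|\nabla w_0|^2+\int_{\mathbb{R}^N}w_0^2=\int_{\mathbb{R}^N}w_0^p$. Writing the equation as $-\Delta w_0=g(w_0)$ with $g(s)=s^{p-1}-s$ and primitive $G(s)=s^p/p-s^2/2$, the standard Pohozaev computation — either testing the equation with $x\cdot\nabla w_0$ and integrating by parts, or differentiating $t\mapsto \tfrac12 t^{N-2}\int_{\mathbb{R}^N}|\nabla w_0|^2-t^{N}\int_{\mathbb{R}^N}G(w_0)$ at $t=1$ — gives $\tfrac{N-2}{2}\int_{\mathbb{R}^N}|\nabla w_0|^2=N\int_{\mathbb{R}^N}G(w_0)$, that is, $\tfrac{N-2}{2}\int_{\mathbb{R}^N}|\nabla w_0|^2+\tfrac{N}{2}\int_{\mathbb{R}^N}w_0^2=\tfrac{N}{p}\int_{\mathbb{R}^N}w_0^p$.

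Finally I would use the Nehari identity to substitute $\int_{\mathbb{R}^N}w_0^2=\int_{\mathbb{R}^N}w_0^p-\int_{\mathbb{R}^N}|\nabla w_0|^2$ into the Pohozaev identity; after simplification the $w_0^p$ terms combine and the $w_0^2$ terms disappear, leaving $\int_{\mathbb{R}^N}|\nabla w_0|^2=N\bigl(\tfrac12-\tfrac1p\bigr)\int_{\mathbb{R}^N}w_0^p=\tfrac{N(p-2)}{2p}\,C_1=\gamma_p C_1$, which is the claim. There is no serious obstacle here: the only point deserving a line of care is the rigorous derivation of the Pohozaev identity, namely verifying that the exponential decay of $w_0$ forces the surface integrals over $\partial B_R$ to vanish as $R\to\infty$, which is entirely routine for the ground state.
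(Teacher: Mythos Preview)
Your proposal is correct and follows essentially the same approach as the paper: combine the Nehari identity (from testing the equation with $w_0$) with the Pohozaev identity to obtain the displayed $2\times 2$ linear system, then eliminate $\int_{\mathbb{R}^N}w_0^2$ to arrive at $\int_{\mathbb{R}^N}|\nabla w_0|^2=\gamma_p C_1$. You add the justification for the boundary terms via exponential decay, which the paper leaves implicit, but the argument is otherwise identical.
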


For \( b, \rho \in \mathbb{R} \) fixed, let us search for \( (\lambda, w) \in \mathbb{R} \times H^1(\mathbb{R}^N) ,\) with \( \lambda > 0 \) in \( \mathbb{R}^N ,\) solving
\begin{equation}\label{tag2.3}
\begin{cases}
-\Delta w + \lambda w = \rho w^{p-1} & \text{in } \mathbb{R}^N, \\
w(0) = \max w \quad \text{and} \quad \int_{\mathbb{R}^N} w^2 = b^2.
\end{cases}
\end{equation}
Solutions \( w \) of $($\ref{tag2.3}$)$ can be found as critical points of \( I_\rho: H^1(\mathbb{R}^N) \mapsto \mathbb{R} ,\) defined by
\begin{equation}\label{tag2.4}
I_\rho(w) = \int_{\mathbb{R}^N} \left( \frac{1}{2} |\nabla w|^2 - \frac{\rho}{p} w^p \right),
\end{equation}
constrained on the \( L^2 \)-sphere \( T_b:=\{w\in H^1(\mathbb{R}^N):\int_{\mathbb{R}^N}w^2=b^2\} \) and \( \lambda \) appears as Lagrange multipliers. It is well known that they can be obtained by the solutions of $($\ref{tag2.1}$)$ by scaling.

Let us introduce the set
\begin{equation}\label{tag2.5}
\mathcal{P}(b, \rho) := \left\{ w \in T_b : \int_{\mathbb{R}^N} |\nabla w|^2 = \rho\gamma_p \int_{\mathbb{R}^N} w^p \right\}.
\end{equation}
The role of \( \mathcal{P}(b, \rho) \) is clarified by the following result.

\begin{lemma}\label{lemma2.1}
If \( w \) is a solution of $($\ref{tag2.3}$),$ then \( w \in \mathcal{P}(b, \rho) \). In addition, the positive solution \( w \) of $($\ref{tag2.3}$)$ minimizes \( I_\rho \) on \( \mathcal{P}(b, \rho) \).
\end{lemma}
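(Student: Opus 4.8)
The plan is to establish the two assertions separately: first that every solution lies on the Pohozaev manifold $\mathcal{P}(b,\rho)$, and then that the positive solution minimizes $I_\rho$ there. For the first part, I would recall that a solution $w$ of \eqref{tag2.3} satisfies two integral identities: the equation tested against $w$ itself gives
\[
\int_{\mathbb{R}^N}|\nabla w|^2 + \lambda\int_{\mathbb{R}^N}w^2 = \rho\int_{\mathbb{R}^N}w^p,
\]
and the Pohozaev identity gives
\[
\frac{N-2}{2}\int_{\mathbb{R}^N}|\nabla w|^2 + \frac{N}{2}\lambda\int_{\mathbb{R}^N}w^2 = \frac{N}{p}\rho\int_{\mathbb{R}^N}w^p.
\]
Eliminating the $\lambda\int w^2$ term between these two (multiply the first by $N/2$ and subtract) yields exactly $\int_{\mathbb{R}^N}|\nabla w|^2 = \rho\gamma_p\int_{\mathbb{R}^N}w^p$ with $\gamma_p = N(p-2)/(2p)$, which is the defining relation of $\mathcal{P}(b,\rho)$. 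Since $w\in T_b$ by hypothesis, this shows $w\in\mathcal{P}(b,\rho)$.

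For the minimization claim, the standard approach uses the scaling $w_s(x):=s^{N/2}w(sx)$, which preserves the $L^2$-norm (so $w_s\in T_b$ whenever $w\in T_b$) and acts on the two terms of $I_\rho$ as
\[
I_\rho(w_s) = \frac{s^2}{2}\int_{\mathbb{R}^N}|\nabla w|^2 - \frac{\rho\, s^{N(p-2)/2}}{p}\int_{\mathbb{R}^N}w^p =: \psi_w(s).
\]
Since $p>2+4/N$ forces $N(p-2)/2 > 2$, the function $\psi_w$ has a unique positive critical point $s_w$, it is a strict maximum, and one checks that $w_s\in\mathcal{P}(b,\rho)$ precisely when $s=s_w$. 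Thus $\mathcal{P}(b,\rho)$ is the "natural constraint" obtained by pulling each ray back to its fibered maximum, and for the positive solution $w$ (which lies on $\mathcal{P}(b,\rho)$ by part one, so $s_w=1$) one has $I_\rho(w)=\max_{s>0}I_\rho(w_s)$. To conclude that $w$ actually \emph{minimizes} $I_\rho$ over $\mathcal{P}(b,\rho)$, I would argue as in the mass-supercritical normalized-solutions literature: on $\mathcal{P}(b,\rho)$ the functional simplifies, using the constraint, to $I_\rho(v)=\bigl(\tfrac12-\tfrac1{p\gamma_p}\bigr)\int_{\mathbb{R}^N}|\nabla v|^2$ (a positive multiple of the Dirichlet energy, since $p\gamma_p>2$), so minimizing $I_\rho$ on $\mathcal{P}(b,\rho)$ amounts to minimizing the Dirichlet energy there; one then shows this infimum is attained, that a minimizer is a critical point of $I_\rho|_{T_b}$ (the manifold $\mathcal{P}(b,\rho)$ being a natural constraint, by the Lagrange-multiplier computation above the extra multiplier vanishes), hence solves \eqref{tag2.3}, and finally invokes the uniqueness of the positive solution of \eqref{tag2.3}—inherited from the uniqueness for \eqref{tag2.1} via scaling—to identify the minimizer with $w$ (up to translation).

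The main obstacle I anticipate is not the Pohozaev computation, which is routine, but the identification of the minimizer with the known positive solution: one must verify carefully that the infimum of the Dirichlet energy over $\mathcal{P}(b,\rho)$ is positive and attained (compactness is delicate on $\mathbb{R}^N$, though here it can be bypassed by restricting to radial functions via Schwarz symmetrization, since both $\int|\nabla v|^2$ decreases and $\int v^p$ increases, keeping $v$ on or "below" the manifold and allowing rescaling back onto it), and that $\mathcal{P}(b,\rho)$ is genuinely a natural constraint so that the minimizer solves the Euler--Lagrange equation with $\lambda>0$. Once that is in place, uniqueness for \eqref{tag2.3} closes the argument. Alternatively—and perhaps more cleanly—one can sidestep the compactness issue entirely by exhibiting the explicit scaling relation between solutions of \eqref{tag2.3} and $w_0$: every positive solution of \eqref{tag2.3} is of the form $w(x)=A\, w_0(\sqrt{\lambda}\,x)$ for appropriate $A,\lambda>0$ determined by $b$ and $\rho$, so there is in fact a \emph{unique} positive element of $\mathcal{P}(b,\rho)$ that is a critical point, and a direct comparison of $I_\rho$-values along the scaling fibers shows it dominates from below any other point of the manifold.
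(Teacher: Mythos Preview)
Your proposal is correct and follows the same route as the paper, which does not give its own argument but simply cites Lemmas 2.7 and 2.10 of Jeanjean (1997); your Pohozaev computation for part one and the fibering/natural-constraint argument combined with uniqueness of the positive solution for part two are precisely the content of those lemmas.
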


\begin{proof}
The proof of the first part is a simple consequence of the Pohozaev identity. We refer to Lemma 2.7 in \cite{jeanjean1997existence} for more details. For the last part, we refer to Lemma 2.10 in \cite{jeanjean1997existence}.
\end{proof}

\begin{proposition}\label{lambda_{b,}}
Problem $($\ref{tag2.3}$)$ has a unique positive solution \( (\lambda_{b,\rho}, w_{b,\rho}) \) defined by

\begin{align}
\lambda_{b,\rho} := (\frac{C_0}{\rho^{\frac{2}{p-2}} b^2})^{\frac{2(p-2)}{N(p-2)-4}} \quad\text{and}\quad
w_{b,\rho}(x) := (\frac{C_0}{\rho^{\frac{N}{2}} b^2})^{\frac{p-2}{N(p-2)-4}} w_0 \left( \frac{C_0}{\rho^{\frac{2}{p-2}} b^2} x \right).
\end{align}
The function \( w_{b,\rho} \) satisfies
\begin{equation}\label{tag2.6}
\int_{\mathbb{R}^N} |\nabla w_{b,\rho}|^2 = \gamma_p\rho^{-\frac{2}{p-2}}(\frac{C_0}{\rho^{\frac{2}{p-2}} b^2})^{\frac{p-p\gamma_p}{p\gamma_P-2}}C_1,
\end{equation}

\begin{equation}\label{tag2.7}
\int_{\mathbb{R}^N} w_{b,\rho}^p =\rho^{-\frac{p}{p-2}}(\frac{C_0}{\rho^{\frac{2}{p-2}} b^2})^{\frac{p-p\gamma_p}{p\gamma_P-2}}C_1,
\end{equation}

\begin{equation}\label{tag2.8}
l(b,\rho):=I_\rho(w_{b,\rho}) = \frac{p\gamma_p-2}{2p}\rho^{-\frac{2}{p-2}}(\frac{C_0}{\rho^{\frac{2}{p-2}} b^2})^{\frac{p-p\gamma_p}{p\gamma_P-2}}C_1.
\end{equation}
The value \( I_\rho(w_{b,\rho}) \) is called the least energy level of problem $($\ref{tag2.3}$)$. Moreover, there exists \( c_0 > 0 \) $($see \cite{bahri1997existence} and the references therein$)$ such that
\begin{equation}\label{tag2.7'}
w_{b,\rho}(x)|x|^{\frac{N-1}{2}} e^{\sqrt{\lambda_{b,\rho}}|x|} \to c_0, \quad \text{as } |x| \to \infty,
\end{equation}
\begin{equation}\label{tag2.8'}
w'_{b,\rho}(r) r^{\frac{N-1}{2}} e^{\sqrt{\lambda_{b,\rho}} r} \to -c_0 \sqrt{\lambda_{b,\rho}}, \quad \text{as } r = |x| \to +\infty.
\end{equation}
\end{proposition}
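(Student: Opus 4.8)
The plan is to reduce problem \eqref{tag2.3} to the canonical equation \eqref{tag2.1}, whose positive solution $w_0$ is known to be unique (Berestycki--Lions \cite{berestycki1983nonlinear}, Kwong \cite{Kwong1989UniquenessOP}), and then to read off every assertion by scaling. Given any positive solution $(\lambda,w)$ of \eqref{tag2.3} with $\lambda>0$, set
\[
\tilde w(y):=\Big(\tfrac{\rho}{\lambda}\Big)^{\frac{1}{p-2}}\, w\!\Big(\tfrac{y}{\sqrt{\lambda}}\Big).
\]
A direct computation shows $-\Delta\tilde w+\tilde w=\tilde w^{p-1}$ in $\mathbb{R}^N$, with $\tilde w>0$ and $\tilde w\in H^1(\mathbb{R}^N)$. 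By Gidas--Ni--Nirenberg symmetry, $\tilde w$ is radial and strictly decreasing about its unique maximum point, which the normalization $w(0)=\max w$ places at the origin; uniqueness of $w_0$ then forces $\tilde w=w_0$. Undoing the scaling gives $w(x)=(\lambda/\rho)^{1/(p-2)}\,w_0(\sqrt{\lambda}\,x)$, so the solution of \eqref{tag2.3} is completely determined by the pair $(\lambda,\rho)$; conversely this formula produces, for every $\lambda>0$, a positive solution of the PDE in \eqref{tag2.3}.

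\textbf{Fixing the Lagrange multiplier.} With $w(x)=(\lambda/\rho)^{1/(p-2)}w_0(\sqrt{\lambda}\,x)$ and $C_0=\int_{\mathbb{R}^N}w_0^2$, a change of variables yields
\[
\int_{\mathbb{R}^N} w^2\,dx=\Big(\tfrac{\lambda}{\rho}\Big)^{\frac{2}{p-2}}\lambda^{-\frac{N}{2}}\,C_0=\rho^{-\frac{2}{p-2}}\,\lambda^{\frac{2}{p-2}-\frac{N}{2}}\,C_0.
\]
Since $p>2+4/N$, the exponent $\tfrac{2}{p-2}-\tfrac{N}{2}=-\tfrac{N(p-2)-4}{2(p-2)}$ is strictly negative, so $\lambda\mapsto\int w^2$ is a continuous decreasing bijection of $(0,\infty)$ onto itself. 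Hence the constraint $\int w^2=b^2$ has exactly one root, which solving the displayed identity identifies as $\lambda_{b,\rho}=\big(C_0/(\rho^{2/(p-2)}b^2)\big)^{2(p-2)/(N(p-2)-4)}$; substituting this value into $w(x)=(\lambda/\rho)^{1/(p-2)}w_0(\sqrt\lambda\,x)$ gives the stated $w_{b,\rho}$. This settles existence and uniqueness.

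\textbf{Energy and decay.} The remaining identities are bookkeeping. Changing variables and using $\int_{\mathbb{R}^N}|\nabla w_0|^2=\gamma_p C_1$ from the lemma above give
\[
\int_{\mathbb{R}^N}|\nabla w_{b,\rho}|^2=\Big(\tfrac{\lambda_{b,\rho}}{\rho}\Big)^{\frac{2}{p-2}}\lambda_{b,\rho}^{1-\frac{N}{2}}\gamma_p C_1,\qquad \int_{\mathbb{R}^N} w_{b,\rho}^p=\Big(\tfrac{\lambda_{b,\rho}}{\rho}\Big)^{\frac{p}{p-2}}\lambda_{b,\rho}^{-\frac{N}{2}}C_1,
\]
and comparing these two expressions (equivalently, invoking Lemma~\ref{lemma2.1}) confirms $w_{b,\rho}\in\mathcal{P}(b,\rho)$, i.e. $\int|\nabla w_{b,\rho}|^2=\rho\gamma_p\int w_{b,\rho}^p$. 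Consequently
\[
I_\rho(w_{b,\rho})=\tfrac12\int|\nabla w_{b,\rho}|^2-\tfrac{\rho}{p}\int w_{b,\rho}^p=\rho\Big(\tfrac{\gamma_p}{2}-\tfrac1p\Big)\int w_{b,\rho}^p=\tfrac{p\gamma_p-2}{2p}\,\rho\int w_{b,\rho}^p,
\]
and inserting the value of $\lambda_{b,\rho}$ produces the closed forms \eqref{tag2.6}--\eqref{tag2.8}. Finally, \eqref{tag2.7'}--\eqref{tag2.8'} follow from the classical asymptotics $w_0(r)\sim c\,r^{-\frac{N-1}{2}}e^{-r}$ and $w_0'(r)\sim -c\,r^{-\frac{N-1}{2}}e^{-r}$ as $r\to\infty$ (see \cite{bahri1997existence} and the references therein), applied to $w_{b,\rho}(x)=(\lambda_{b,\rho}/\rho)^{1/(p-2)}w_0(\sqrt{\lambda_{b,\rho}}\,x)$, the limiting constant absorbing the fixed powers of $\lambda_{b,\rho}$.

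\textbf{Main point of care.} There is no deep obstacle here: the only non-elementary inputs are the uniqueness of $w_0$ and the Gidas--Ni--Nirenberg classification, both quoted from the literature. The one place deserving attention is verifying that the mass-constraint exponent $\tfrac{2}{p-2}-\tfrac{N}{2}$ does not vanish — this is precisely the $L^2$-supercriticality hypothesis $p>2+4/N$ — and, beyond that, keeping the powers of $\lambda_{b,\rho}$, $\rho$ and $b$ consistent throughout the substitutions.
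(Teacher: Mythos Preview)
Your proof is correct and follows essentially the same approach as the paper: reduce to the canonical equation \eqref{tag2.1} by scaling, invoke Kwong's uniqueness for $w_0$, and obtain the integral identities \eqref{tag2.6}--\eqref{tag2.8} and the decay estimates by change of variables. The paper's own proof is a brief sketch (``directly check that $w_{b,\rho}$ solves \eqref{tag2.3}; uniqueness by \cite{Kwong1989UniquenessOP}; the identities by change of variables''), whereas you spell out the derivation of the formula and the role of the $L^2$-supercriticality hypothesis, but the underlying argument is the same.
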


\begin{proof}
It is not difficult to directly check that \( w_{b,\rho} \) defined in the proposition is a solution of $($\ref{tag2.3}$)$ for \( \lambda = \lambda_{b,\rho} >0 \). By \cite{Kwong1989UniquenessOP}, it is the only positive solution. To obtain $($\ref{tag2.6}$),$ $($\ref{tag2.7}$)$ and $($\ref{tag2.8}$)$ we can use the explicit expression of \( w_{b,\rho} \) by a change of variables.
\end{proof}

Working with systems with several components, it will be useful to have a characterization of the best constant in a Gagliardo-Nirenberg inequality in terms of \( C_0 \) and \( C_1 \). To obtain it, we firstly observe that if \( \rho_p:=(\frac{C_0}{b^2})^{\frac{p-2}{2}}   ,\) then \( w_{b, \rho_p} \) is the unique positive solution of
\[
\begin{cases}
-\Delta w + w = (\frac{C_0}{b^2})^{\frac{p-2}{2}} w^{p-1} & \text{in } \mathbb{R}^N, \\
w(0) = \max w \quad \text{and} \quad \int_{\mathbb{R}^N} w^2 = b^2,
\end{cases}
\]
and hence is a minimizer of \( I_{\rho_p} \) on \( \mathcal{P}(b, \rho_p) \). Our next result shows that this level can also be characterized as an infimum of a Rayleigh-type quotient, defined by
\[
\mathcal{R}_b(w) := \frac{(p\gamma_p-2) \left( \int_{\mathbb{R}^N} |\nabla w|^2 \right)^{r+1}}{2p\gamma_p^{r+1} \left( (\frac{C_0}{b^2})^{\frac{p-2}{2}} \int_{\mathbb{R}^N} w^p \right)^r},
\]
where $r=\frac{2}{p\gamma_p-2}=\frac{4}{Np-2N-4}>0.$

In order to describe the minimax structure, it is convenient to introduce some notation. We define, for \( s \in \mathbb{R} \) and \( w \in H^1(\mathbb{R}^N) ,\) the radial dilation.
\[
(s \star w)(x) := e^{\frac{Ns}{2}}w(e^s x).
\]
It is straightforward to check that if \( w \in T_b ,\) then \( s \star w \in T_b \) for every \( s \in \mathbb{R} \).

\begin{lemma} 
For every $\rho>0$ fixed, and for every \(w \in T_b,\) there exists a unique \(s_{\rho,w} \in \mathbb{R}\) such that \(s_{\rho,w} \star w \in \mathcal{P}(b, \rho)\). Moreover, \(s_{\rho,w}\) is the unique critical point of \(I_{\rho},\) which is a strict maximum.
\end{lemma}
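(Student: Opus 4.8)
The plan is to reduce this statement to an elementary one–variable calculus fact by tracking how each ingredient of $I_\rho$ and of $\mathcal{P}(b,\rho)$ transforms under the dilation $s\star w$. The first step I would carry out is to record the scaling identities: for $w\in T_b$ and $s\in\mathbb{R}$, the change of variables $y=e^sx$ gives
\[
\int_{\mathbb{R}^N}|s\star w|^2=\int_{\mathbb{R}^N}w^2=b^2,\qquad
\int_{\mathbb{R}^N}|\nabla(s\star w)|^2=e^{2s}\int_{\mathbb{R}^N}|\nabla w|^2,\qquad
\int_{\mathbb{R}^N}|s\star w|^p=e^{\frac{N(p-2)}{2}s}\int_{\mathbb{R}^N}w^p .
\]
In particular $s\star w\in T_b$ for every $s$, as already noted in the text.

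Next I would set $A:=\int_{\mathbb{R}^N}|\nabla w|^2$ and $B:=\int_{\mathbb{R}^N}w^p$, both strictly positive since $w\in H^1(\mathbb{R}^N)\setminus\{0\}$ (because $b\neq 0$), and introduce $\varphi(s):=I_\rho(s\star w)=\tfrac12 A e^{2s}-\tfrac{\rho}{p}B e^{\frac{N(p-2)}{2}s}$. Using the scaling identities and $\gamma_p=\frac{N(p-2)}{2p}$, the condition $s\star w\in\mathcal{P}(b,\rho)$ becomes $A e^{2s}=\rho\gamma_p B e^{\frac{N(p-2)}{2}s}$, which is exactly $\varphi'(s)=0$. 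Thus the points $s$ with $s\star w\in\mathcal{P}(b,\rho)$ are precisely the critical points of $\varphi$, and I would then factor
\[
\varphi'(s)=e^{2s}\Big(A-\rho\gamma_p B\, e^{(\frac{N(p-2)}{2}-2)s}\Big).
\]

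The third step is the sign analysis. Since $p>2+\tfrac4N$ we have $\frac{N(p-2)}{2}-2>0$, so $s\mapsto e^{(\frac{N(p-2)}{2}-2)s}$ is strictly increasing from $0$ to $+\infty$; hence the bracket is strictly decreasing, positive as $s\to-\infty$ and negative as $s\to+\infty$. Therefore $\varphi'$ vanishes at exactly one point $s_{\rho,w}$, with $\varphi'>0$ on $(-\infty,s_{\rho,w})$ and $\varphi'<0$ on $(s_{\rho,w},+\infty)$; together with $\varphi(s)\to0$ as $s\to-\infty$ and $\varphi(s)\to-\infty$ as $s\to+\infty$, this shows that $s_{\rho,w}$ is the unique critical point of $s\mapsto I_\rho(s\star w)$ and is a strict global maximum, giving all three conclusions.

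I do not expect a genuine obstacle here: the argument is purely computational and needs no compactness or PDE input. The one point that must be handled carefully is the sign of $\frac{N(p-2)}{2}-2$, i.e. the hypothesis $p>2+\tfrac4N$ ($L^2$-supercriticality), which is exactly what makes the nonlinear exponent dominate the kinetic one and thus produces a single, non-degenerate maximum of $\varphi$ rather than a minimum or a degenerate profile; I would state this dependence explicitly so that the supercritical regime is seen to be essential.
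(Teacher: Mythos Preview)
Your proposal is correct and follows essentially the same route as the paper: compute $I_\rho(s\star w)$ via the scaling identities, differentiate in $s$, and use $p>2+\tfrac4N$ (i.e.\ $\tfrac{N(p-2)}{2}>2$) to see that $\varphi'(s)=0$ has a unique solution which is a strict global maximum, this being precisely the condition $s\star w\in\mathcal{P}(b,\rho)$. Your write-up is in fact more detailed than the paper's, which records the same two formulas for $I_\rho(s\star w)$ and its $s$-derivative and then concludes directly.
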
 
\begin{proof}
   For every $w\in T_{b},$ 
   \[
   I_{\rho}(s\star w) =\frac{e^{2s}}{2}\int_{\mathbb{R}^N}|\nabla w|^2-\frac{\rho e^{\frac{p-2}{2}Ns}}{p}\int_{\mathbb{R}^N}|w|^p,
   \]
   \[
   \frac{\partial}{\partial s} I_{\rho}(s\star w) =e^{2s}\int_{\mathbb{R}^N}|\nabla w|^2-\rho\gamma_p e^{\frac{p-2}{2}Ns}\int_{\mathbb{R}^N}|w|^p.
   \] 
   $p>2+4/N$ follows $\frac{p-2}{2}N>2,$ so there exists a unique $s_{\rho, w}\in \mathbb{R}$ that satisfies \[
   \frac{\partial}{\partial s} I_{\rho}(s_{\rho,w}\star w)=0.
   \]
   Equivalently, \(s_{\rho,w}\) is the unique critical point of \(I_{\rho},\) which is a strict maximum.
\end{proof}

\begin{lemma}\label{lemma2.3}
There holds
\[
\inf_{w \in \mathcal{P}(b, \rho_p)} I_{ \rho_p}(w) = \inf_{T_b} \mathcal{R}_b(w).
\]

\end{lemma}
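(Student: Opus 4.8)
The plan is to prove the two infima coincide by a double-inequality argument, exploiting the scaling $s\star w$ and the explicit minimizer $w_{b,\rho_p}$ supplied by Proposition \ref{lambda_{b,}} and Lemma \ref{lemma2.1}. First I would observe that $\mathcal{R}_b$ is scaling-invariant under $s\star\,\cdot$: substituting $(s\star w)(x)=e^{Ns/2}w(e^s x)$ gives $\int|\nabla(s\star w)|^2=e^{2s}\int|\nabla w|^2$ and $\int|s\star w|^p=e^{(p-2)Ns/2}\int|w|^p$, so the numerator scales like $e^{2(r+1)s}$ and the denominator like $e^{r(p-2)Ns/2}$; since $r(p-2)N/2=r\cdot p\gamma_p=2+r\cdot 2 = 2(r+1)$ (using $r=\tfrac{2}{p\gamma_p-2}$, hence $rp\gamma_p=2r+2$), the two exponents agree and $\mathcal{R}_b(s\star w)=\mathcal{R}_b(w)$. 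Consequently, for any $w\in T_b$ we may replace $w$ by $s_{\rho_p,w}\star w\in\mathcal{P}(b,\rho_p)$ without changing $\mathcal{R}_b(w)$, so $\inf_{T_b}\mathcal{R}_b=\inf_{\mathcal{P}(b,\rho_p)}\mathcal{R}_b$.

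Next I would compute that on the Pohozaev set $\mathcal{P}(b,\rho_p)$ the Rayleigh quotient reduces exactly to the energy. Indeed, for $w\in\mathcal{P}(b,\rho_p)$ we have $\int|\nabla w|^2=\rho_p\gamma_p\int w^p$ with $\rho_p=(C_0/b^2)^{(p-2)/2}$, i.e. $(C_0/b^2)^{(p-2)/2}\int w^p=\gamma_p^{-1}\int|\nabla w|^2$. Plugging this into $\mathcal{R}_b(w)$, the denominator becomes $2p\gamma_p^{r+1}\bigl(\gamma_p^{-1}\int|\nabla w|^2\bigr)^r=2p\gamma_p\bigl(\int|\nabla w|^2\bigr)^r$, so $\mathcal{R}_b(w)=\frac{p\gamma_p-2}{2p\gamma_p}\int|\nabla w|^2$. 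On the other hand, for such $w$, $I_{\rho_p}(w)=\frac12\int|\nabla w|^2-\frac{\rho_p}{p}\int w^p=\frac12\int|\nabla w|^2-\frac{1}{p\gamma_p}\int|\nabla w|^2=\frac{p\gamma_p-2}{2p\gamma_p}\int|\nabla w|^2=\mathcal{R}_b(w)$. Thus $\mathcal{R}_b$ and $I_{\rho_p}$ agree pointwise on $\mathcal{P}(b,\rho_p)$, whence $\inf_{\mathcal{P}(b,\rho_p)}\mathcal{R}_b=\inf_{\mathcal{P}(b,\rho_p)}I_{\rho_p}$.

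Combining the two displays gives $\inf_{T_b}\mathcal{R}_b=\inf_{\mathcal{P}(b,\rho_p)}\mathcal{R}_b=\inf_{\mathcal{P}(b,\rho_p)}I_{\rho_p}$, which is exactly the claimed identity. As a sanity check one can verify the common value equals $l(b,\rho_p)=I_{\rho_p}(w_{b,\rho_p})$ from \eqref{tag2.8}, consistent with Lemma \ref{lemma2.1} stating $w_{b,\rho_p}$ minimizes $I_{\rho_p}$ on $\mathcal{P}(b,\rho_p)$; this also shows both infima are attained and positive.

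I do not expect a serious obstacle here — the result is essentially a bookkeeping identity once the scaling invariance of $\mathcal{R}_b$ is noticed. The only point requiring care is the exponent arithmetic: one must check $r(p-2)N/2 = 2(r+1)$, equivalently $rp\gamma_p=2r+2$, which follows directly from the definition $r=2/(p\gamma_p-2)$ and hence is where I would be most careful to avoid a slip. A secondary subtlety is that the reduction $\inf_{T_b}\mathcal{R}_b=\inf_{\mathcal{P}(b,\rho_p)}\mathcal{R}_b$ uses the preceding lemma to guarantee that every $w\in T_b$ has a (unique) dilation landing in $\mathcal{P}(b,\rho_p)$, so the scaling orbit of each point meets the constraint set; this makes the passage to the constrained infimum legitimate rather than merely an inequality in one direction.
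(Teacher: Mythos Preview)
Your proposal is correct and follows essentially the same approach as the paper: both arguments hinge on (i) the scaling invariance $\mathcal{R}_b(s\star w)=\mathcal{R}_b(w)$, (ii) the pointwise identity $\mathcal{R}_b(w)=I_{\rho_p}(w)$ on $\mathcal{P}(b,\rho_p)$, and (iii) the preceding lemma to project any $w\in T_b$ onto $\mathcal{P}(b,\rho_p)$ via $s_{\rho_p,w}\star w$. Your version just packages these as the chain $\inf_{T_b}\mathcal{R}_b=\inf_{\mathcal{P}(b,\rho_p)}\mathcal{R}_b=\inf_{\mathcal{P}(b,\rho_p)}I_{\rho_p}$ rather than as two separate inequalities, and your exponent check $rp\gamma_p=2(r+1)$ is the explicit justification of what the paper states without computation.
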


\begin{proof}
If \(w \in \mathcal{P}(b,\rho_p),\) then
\[
\frac{ \int_{\mathbb{R}^N} |\nabla w|^2 }{(\frac{C_0}{b^2})^{\frac{p-2}{2}} \gamma_p \int_{\mathbb{R}^N} w^p} = 1 \quad \text{and} \quad I_{\rho_p}(w) = (\frac{p\gamma_p-2}{2p\gamma_p}) \int_{\mathbb{R}^N} |\nabla w|^2.
\]
Therefore
\[
\quad I_{\rho_p}(w) = (\frac{p\gamma_p-2}{2p\gamma_p}) \int_{\mathbb{R}^N} |\nabla w|^2\left( \frac{ \int_{\mathbb{R}^N} |\nabla w|^2 }{(\frac{C_0}{b^2})^{\frac{p-2}{2}} \gamma_p \int_{\mathbb{R}^N} w^p} \right)^{\frac{2}{p\gamma_p-2}} = \mathcal{R}_b(w),
\]
which proves that \(\inf_{ \mathcal{P}(b,\rho_p)} I_{ C_0/b^2}(w) \geq \inf_{T_b} \mathcal{R}_b(w)\). On the other hand, it is easy to check that
\[
\mathcal{R}_b(s \star w) = \mathcal{R}_b(w) \quad \text{for all } s \in \mathbb{R}, \quad w \in T_{b}.
\]
By the above lemma, we conclude that
\[
\mathcal{R}_b(w) = \mathcal{R}_b(s_{\rho_p,w} \star w) = I_{ \rho_p}(s_{\rho_p,w} \star w) \geq \inf_{ \mathcal{P}(b, \rho_p)} I_{ \rho_p}(w),
\]
for every \(w \in T_{b}\).

\end{proof}

Let us recall the following Gagliardo-Nirenberg inequality: there exists a universal constant \( C_{p,N} > 0 \) such that
\begin{equation}\label{tag2.9}
\int_{\mathbb{R}^N} w^p \leq C_{p,N} \left( \int_{\mathbb{R}^N} w^2 \right)^{\frac{p(1-\gamma_p)}{2}} \left( \int_{\mathbb{R}^N} |\nabla w|^2 \right)^{\frac{p\gamma_p}{2}} \quad \text{for all } w \in H^1(\mathbb{R}^N).
\end{equation}

In particular, the optimal value of \( C_{p,N} \) can be found as
\begin{align}
\frac{1}{C_{p,N}^r} &= \inf_{w \in H^1(\mathbb{R}^N) \setminus \{0\}} \frac{\left( \int_{\mathbb{R}^N} w^2 \right)^{\frac{p(1-\gamma_p)}{2}r} \cdot \left( \int_{\mathbb{R}^N} |\nabla w|^2 \right)^{r+1}}{\left( \int_{\mathbb{R}^N} w^p \right)^r} \\
&= \inf_{w \in T_b} \frac{b^{p(1-\gamma_p)r} \left( \int_{\mathbb{R}^N} |\nabla w|^2 \right)^{r+1}}{\left( \int_{\mathbb{R}^N} w^p \right)^r},\label{tag2.10}
\end{align}
where the last equality comes from the fact that the ratio on the last algebraic fraction is invariant with respect to multiplication of \( w \) with a positive number.

\begin{lemma}\label{C_{p,N}}
In the previous notation, we have
\begin{equation}
C_{p,N}^r = \gamma_p^{\frac{-p\gamma_p}{p\gamma_p-2}}C_0^{\frac{p\gamma_p-p}{p\gamma_P-2}}C_1^{-1},\ \ C_{p,N}=\gamma_p^{\frac{-p\gamma_p}{2}}C_0^{\frac{p\gamma_p-p}{2}}C_1^{-\frac{p\gamma_{p}-2}{2}}
\end{equation}
where \( C_0 \) and \( C_1 \) have been defined in $($\ref{tag2.2}$)$.
\end{lemma}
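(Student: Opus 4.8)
The plan is to evaluate $\inf_{w\in T_b}\mathcal{R}_b(w)$ in two different ways and compare. First I would use Lemma~\ref{lemma2.3}, which identifies this infimum with $\inf_{\mathcal{P}(b,\rho_p)}I_{\rho_p}$ for $\rho_p=(C_0/b^2)^{(p-2)/2}$; by Lemma~\ref{lemma2.1} and Proposition~\ref{lambda_{b,}} this level is attained at the unique positive solution $w_{b,\rho_p}$ of \eqref{tag2.3}, hence equals $l(b,\rho_p)=I_{\rho_p}(w_{b,\rho_p})$. The point is that for $\rho=\rho_p$ one has $\rho_p^{2/(p-2)}b^2=C_0$, so $\frac{C_0}{\rho_p^{2/(p-2)}b^2}=1$ and $\rho_p^{-2/(p-2)}=b^2/C_0$; plugging this into the explicit formula \eqref{tag2.8} collapses it to
\[
\inf_{w\in T_b}\mathcal{R}_b(w)=l(b,\rho_p)=\frac{(p\gamma_p-2)\,b^2\,C_1}{2p\,C_0}.
\]

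Second, I would rewrite $\mathcal{R}_b$ so that the Gagliardo--Nirenberg quotient appears. Pulling the constants out of the denominator via $\big((\tfrac{C_0}{b^2})^{(p-2)/2}\int w^p\big)^r=C_0^{(p-2)r/2}b^{-(p-2)r}\big(\int w^p\big)^r$ gives
\[
\mathcal{R}_b(w)=\frac{p\gamma_p-2}{2p\,\gamma_p^{\,r+1}}\,C_0^{-(p-2)r/2}\,b^{(p-2)r}\,\frac{\big(\int_{\mathbb{R}^N}|\nabla w|^2\big)^{r+1}}{\big(\int_{\mathbb{R}^N}w^p\big)^r}.
\]
Taking the infimum over $T_b$ and using \eqref{tag2.10} in the form $\inf_{w\in T_b}\frac{(\int|\nabla w|^2)^{r+1}}{(\int w^p)^r}=b^{-p(1-\gamma_p)r}C_{p,N}^{-r}$, together with the identity $r(p\gamma_p-2)=2$ (so that $(p-2)r-p(1-\gamma_p)r=r(p\gamma_p-2)=2$), all powers of $b$ combine into a single $b^2$:
\[
\inf_{w\in T_b}\mathcal{R}_b(w)=\frac{p\gamma_p-2}{2p\,\gamma_p^{\,r+1}}\,C_0^{-(p-2)r/2}\,b^2\,C_{p,N}^{-r}.
\]

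Comparing the two displays, the common factor $\tfrac{(p\gamma_p-2)b^2}{2p}$ cancels --- confirming, as it must, that the $b$-dependence drops out --- and leaves $C_{p,N}^{-r}=\gamma_p^{\,r+1}C_0^{(p-2)r/2}\,C_1/C_0$, i.e.\ $C_{p,N}^{r}=\gamma_p^{-(r+1)}C_0^{\,1-(p-2)r/2}C_1^{-1}$. The last step is the exponent bookkeeping: $r+1=\frac{2}{p\gamma_p-2}+1=\frac{p\gamma_p}{p\gamma_p-2}$ and $1-\frac{(p-2)r}{2}=1-\frac{p-2}{p\gamma_p-2}=\frac{p\gamma_p-p}{p\gamma_p-2}$, which gives the first formula, and raising to the power $1/r=\frac{p\gamma_p-2}{2}$ gives the second. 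I expect the only delicate point to be this arithmetic with the exponents --- in particular checking that the $b$'s genuinely cancel, which is precisely where $r(p\gamma_p-2)=2$ is used; everything before that is a change of variables already performed in Proposition~\ref{lambda_{b,}}.
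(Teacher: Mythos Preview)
Your proposal is correct and follows essentially the same approach as the paper: both combine \eqref{tag2.10}, Lemma~\ref{lemma2.3}, and the explicit level \eqref{tag2.8} evaluated at $\rho=\rho_p$, and the only difference is that you compute $I_{\rho_p}(w_{b,\rho_p})=\frac{(p\gamma_p-2)b^2C_1}{2pC_0}$ first and then compare, whereas the paper multiplies and divides \eqref{tag2.10} by the appropriate constant to make $\mathcal{R}_b$ appear and then substitutes the level at the end. Your exponent bookkeeping (in particular the use of $r(p\gamma_p-2)=2$ to cancel the $b$-dependence) is correct.
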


\begin{proof}
Multiplying and dividing the last term in $($\ref{tag2.10}$)$ by \( \frac{p\gamma_p-2}{2p\gamma_p^{r+1}} b^{2r}C_0^{-r},\) we deduce that
\begin{equation*}
\frac{1}{C_{p,N}^r} = \frac{2p\gamma_p^{r+1}}{p\gamma_p-2}b^{p(1-\gamma_p)r}(\frac{C_0}{b^2})^{\frac{(p-2)r}{2}} \inf_{w \in T_b} \mathcal{R}_b(w).
\end{equation*}
Hence, by Proposition \ref{lambda_{b,}} and Lemma \ref{lemma2.3}, we infer that
\[
\frac{1}{C_{p,N}^r} = \frac{2p\gamma_p^{r+1}}{p\gamma_p-2}b^{p(1-\gamma_p)r}(\frac{C_0}{b^2})^{\frac{(p-2)r}{2}}  I_{\rho_p}(w_{b, \rho_p}) = \gamma_p^{\frac{p\gamma_p}{p\gamma_P-2}}C_0^{\frac{p-p\gamma_p}{p\gamma_P-2}}C_1.
\]
\end{proof}
Next, motivated by \cite {zhang2022normalized}, we introduce the map \( \Psi_{a,b,R} : \mathbb{R}^N \to T_a \) defined by
\[
\Psi_{a,b,R}(y) = \frac{a}{\| \varphi(\cdot / R) w_{b,\rho}(\cdot - y) \|_{L^2(\mathbb{R}^N)}} \varphi(\cdot / R) w_{b,\rho}(\cdot - y), \quad \forall y \in \mathbb{R}^N,
\]
where \( a,b, R > 0, 2 < p < 2^*,\) and \( \varphi : \mathbb{R}^N \to [0,1] \) is a smooth radial function satisfying
\[
\varphi(x) \equiv 0 \text{ if } |x| \leq 1, \quad \varphi(x) \equiv 1 \text{ if } |x| \geq 2.
\]
We describe some properties of \( \Psi_{a,b,R} \) in the following lemma.

\begin{lemma}\label{Psi}
\begin{enumerate}
   \item[]
    \item[(1)] For any \( R > 0 ,\) \( \Psi_{a,b,R} \in C(\mathbb{R}^{N}, T_{a}) \)$;$
    \item[(2)] \( \lim\limits_{|y|\to +\infty} I_\rho(\Psi_{a,b,R}(y)) = \frac{1}{2}\frac{a^2}{b^2}\int_{\mathbb{R}^N}|\nabla w_{b,\rho}|^2-\frac{1}{p}\frac{a^p}{b^p}\int_{\mathbb{R}^N}| w_{b,\rho}|^p \) uniformly for \( R,b > 0 \) bounded.
\end{enumerate}
\end{lemma}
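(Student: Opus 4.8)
\emph{Overview.} The plan is to handle the two items separately; both reduce to standard facts about translations in $H^1(\mathbb{R}^N)$ and the dominated convergence theorem, and the only point needing real care is the uniformity asserted in (2).

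\emph{Item (1).} The function $x\mapsto\varphi(x/R)$ is smooth, bounded by $1$, with bounded gradient, so pointwise multiplication by it is a bounded linear operator on $H^1(\mathbb{R}^N)$; since $w_{b,\rho}\in H^1(\mathbb{R}^N)$ and the translation map $y\mapsto w_{b,\rho}(\cdot-y)$ is continuous from $\mathbb{R}^N$ into $H^1(\mathbb{R}^N)$, the composition $y\mapsto\varphi(\cdot/R)\,w_{b,\rho}(\cdot-y)$ is continuous from $\mathbb{R}^N$ into $H^1(\mathbb{R}^N)$. Because $\varphi\equiv1$ on $\{|x|\ge2R\}$ and $w_{b,\rho}>0$ everywhere, one has
\[
\|\varphi(\cdot/R)\,w_{b,\rho}(\cdot-y)\|_{L^2(\mathbb{R}^N)}^{2}\ \ge\ \int_{|x|\ge2R}w_{b,\rho}(x-y)^2\,dx\ >\ 0
\]
for every $y$, so the normalising factor is continuous in $y$ and never vanishes. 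Dividing by it gives $\Psi_{a,b,R}\in C(\mathbb{R}^N,H^1(\mathbb{R}^N))$, and by construction $\|\Psi_{a,b,R}(y)\|_{L^2(\mathbb{R}^N)}=a$, i.e. $\Psi_{a,b,R}(y)\in T_a$ for all $y$.

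\emph{Item (2).} Write $\theta(y):=\|\varphi(\cdot/R)\,w_{b,\rho}(\cdot-y)\|_{L^2(\mathbb{R}^N)}$, so that
\[
I_\rho(\Psi_{a,b,R}(y))=\frac{a^2}{2\,\theta(y)^{2}}\int_{\mathbb{R}^N}\bigl|\nabla\bigl(\varphi(\cdot/R)\,w_{b,\rho}(\cdot-y)\bigr)\bigr|^2-\frac{\rho\,a^{p}}{p\,\theta(y)^{p}}\int_{\mathbb{R}^N}\bigl|\varphi(\cdot/R)\,w_{b,\rho}(\cdot-y)\bigr|^{p}.
\]
After the change of variables $z=x-y$ each of these three integrals becomes an integral over $\mathbb{R}^N$ of a $\varphi((z+y)/R)$-weighted quantity built from $w_{b,\rho}(z)$ and $\nabla w_{b,\rho}(z)$. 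Since $0\le\varphi\le1$ and $\varphi((z+y)/R)\to1$ pointwise as $|y|\to\infty$, dominated convergence gives $\theta(y)^2\to b^2$ and $\int|\varphi(\cdot/R)w_{b,\rho}(\cdot-y)|^p\to\int_{\mathbb{R}^N}w_{b,\rho}^p$. For the Dirichlet term, expand $\nabla(\varphi(x/R)w_{b,\rho}(x-y))=\tfrac1R(\nabla\varphi)(x/R)w_{b,\rho}(x-y)+\varphi(x/R)\nabla w_{b,\rho}(x-y)$; since $\nabla\varphi$ is supported in the annulus $\{1\le|x|\le2\}$, after translating the term carrying $\nabla\varphi$ is supported in $\{|z|\ge|y|-2R\}$, where $w_{b,\rho}$ has arbitrarily small $L^2$-mass once $|y|$ is large, so that term (and, by Cauchy--Schwarz, the cross term) tends to $0$, while $\int\varphi((z+y)/R)^2|\nabla w_{b,\rho}(z)|^2\,dz\to\int_{\mathbb{R}^N}|\nabla w_{b,\rho}|^2$ again by dominated convergence. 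Inserting these limits together with $\theta(y)\to b$ into the displayed identity yields the value claimed in (2).

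\emph{Uniformity, the main obstacle.} All of the convergences above are governed by the tails $\int_{|z|\ge r}w_{b,\rho}^2$, $\int_{|z|\ge r}|\nabla w_{b,\rho}|^2$ and $\int_{|z|\ge r}w_{b,\rho}^p$ with $r=|y|-2R$, which tends to $+\infty$ uniformly as $|y|\to\infty$ for $R$ in a bounded range. To make the limit uniform over the relevant ranges of $R$ and $b$ it therefore suffices that these tails tend to $0$ uniformly in $b$; this follows from the explicit scaling formula $w_{b,\rho}(x)=c(b)\,w_0(k(b)x)$ of Proposition~\ref{lambda_{b,}} (equivalently, from the exponential decay estimates (\ref{tag2.7'})--(\ref{tag2.8'}), since $\lambda_{b,\rho}$ stays in a compact subset of $(0,+\infty)$), which reduces each tail to a tail of a fixed $L^1$ function ($w_0^2$, $|\nabla w_0|^2$, $w_0^p$) multiplied by factors that stay bounded for $b$ in a bounded range. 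Re-running the dominated-convergence arguments with these uniform tail bounds — and using that $\theta(y)$ is, for $|y|$ large, bounded above and below by positive constants independent of $R,b$ in that range — upgrades the pointwise limit to the uniform statement.
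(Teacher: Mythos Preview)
Your argument follows the paper's approach: both show that the cutoff $\varphi(\cdot/R)$ becomes negligible as $|y|\to\infty$ by controlling $w_{b,\rho}$ on the ball $B(0,2R)$, the paper via the explicit pointwise asymptotics (\ref{tag2.7'})--(\ref{tag2.8'}), you via dominated convergence plus tail integrals after the change of variable $z=x-y$.

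One point in your uniformity paragraph needs correction. The paper reads ``$b>0$ bounded'' as $0<b\le b_0$, and in that range $\lambda_{b,\rho}=\bigl(C_0/(\rho^{2/(p-2)}b^2)\bigr)^{2(p-2)/(N(p-2)-4)}\to+\infty$ as $b\to 0^+$; hence $\lambda_{b,\rho}$ does \emph{not} stay in a compact subset of $(0,+\infty)$, and the scaling prefactors appearing in your $L^p$- and gradient-tails (which are proportional to $\|w_{b,\rho}\|_p^p$ and $\|\nabla w_{b,\rho}\|_2^2$, see (\ref{tag2.6})--(\ref{tag2.7})) are \emph{not} bounded for small $b$. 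The uniformity nevertheless holds, because only the lower bound $\lambda_{b,\rho}\ge\lambda_{b_0,\rho}>0$ is actually needed: writing $w_{b,\rho}(x)=(\lambda_{b,\rho}/\rho)^{1/(p-2)}w_0(\sqrt{\lambda_{b,\rho}}\,x)$ and using the fixed exponential decay of $w_0$, the polynomial growth of the prefactor in $\lambda_{b,\rho}$ is dominated by the exponential factor $e^{-s\sqrt{\lambda_{b,\rho}}(|y|-2R)}$, and the supremum over $\lambda_{b,\rho}\ge\lambda_{b_0,\rho}$ still tends to $0$ as $|y|\to\infty$. This is precisely what the paper's cruder bound $\int_{B(0,2R)}w_{b,\rho}(x-y)^s\,dx\le C''R_0^N(\sqrt{\lambda_{b,\rho}}|y|)^{-s}$ encodes, since the right-hand side is decreasing in $\lambda_{b,\rho}$.
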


\begin{proof}
(1) This assertion is obviously true.

    (2)Let \( 0 < R \leq R_{0} < +\infty \) and \( 0 < b \leq b_{0} < +\infty \). For any \( s \in [2, 2^{*}) ,\) we firstly claim that
    \begin{equation}\label{tag3.6}
    \| \varphi(\cdot / R) w_{b,\rho}(\cdot - y) \|_{L^{s}(\mathbb{R}^{N})} \to \| w_{b,\rho} \|_{L^{s}(\mathbb{R}^{N})}, \quad \text{as } |y| \to +\infty
    \end{equation}
    uniformly for \( 0 < R \leq R_{0} \) and \( 0 < b \leq b_{0} < +\infty \).
    
Indeed, by the definition of \( \varphi \) and $($\ref{tag2.7'}$),$ for \( |y| \) large, we have
    \begin{align}
    \int_{\mathbb{R}^{N}} |\varphi(x / R)& w_{b,\rho}(x - y) - w_{b,\rho}(x - y)|^{s} \, dx \notag\\
    &\leq C \int_{B(0,2R)} |w_{b,\rho}(x - y)|^{s} \, dx \notag\\
    &\leq C' \int_{B(0,2R)} |x - y|^{-\frac{N-1}{2} s} e^{-\sqrt{\lambda_{b,\rho}} |x - y| s} \, dx \notag\\
    &\leq C'' R_{0}^{N} (\sqrt{\lambda_{b,\rho}} |y|)^{-s}\notag\\
    &=C'' R_{0}^{N} (\frac{\rho^{\frac{2}{p-2}}b^2}{C_0})^{\frac{s(p-2)}{N(p-2)-4}} |y|^{-s},\notag
    \end{align}
    where \( C, C', C'' \) are positive constants independent of \( y \). Note that
    \begin{align*}
    \| w_{b,\rho} \|_{L^{s}(\mathbb{R}^{N})} &- \| \varphi(\cdot / R) w_{b,\rho}(\cdot - y) - w_{b,\rho}(\cdot - y) \|_{L^{s}(\mathbb{R}^{N})} \\
    &\leq \| \varphi(\cdot / R) w_{b,\rho}(\cdot - y) \|_{L^{s}(\mathbb{R}^{N})} \\
    &\leq \| w_{b,\rho} \|_{L^{s}(\mathbb{R}^{N})} + \| \varphi(\cdot / R) w_{b,\rho}(\cdot - y) - w_{b,\rho}(\cdot - y) \|_{L^{s}(\mathbb{R}^{N})}.
    \end{align*}
    The claim is obtained immediately.

    Similarly, by $($\ref{tag2.7'}$),$ $($\ref{tag2.8'}$)$ and the definition of \( \varphi ,\) for \( |y| \) large, we get
    \begin{align*}
    \int_{\mathbb{R}^{N}}& |\nabla[\varphi(x / R) w_{b,\rho}(x - y)] - \nabla w_{b,\rho}(x - y)|^{2} \, dx \\
    &\leq C \frac{1}{R^{2}} \int_{B(0,2R)} |w_{b,\rho}(x - y)|^{2} \, dx + C \int_{B(0,2R)} |\nabla w_{b,\rho}(x - y)|^{2} \, dx \\
    &\leq C' R_{0}^{N-2} (\sqrt{\lambda_{b,\rho}} |y|)^{-2} + C' R_{0}^{N} (\sqrt{\lambda_{b,\rho}} |y|)^{-2},
    \end{align*}
    where \( C, C' \) are positive constants independent of \( y \). This implies that
    \begin{equation} \label{tag3.7}
    \| \nabla[\varphi(\cdot / R) w_{b,\rho}(\cdot - y)] \|_{L^{2}(\mathbb{R}^{N})} \to \| \nabla w_{b,\rho} \|_{L^{2}(\mathbb{R}^{N})}
   \end{equation}
    as \( |y| \to +\infty \) uniformly for \( 0 < R \leq R_{0} \) and \( 0 < b \leq b_{0} < +\infty \).
 It follows from $($\ref{tag3.6}$),$ $($\ref{tag3.7}$)$ and the definition of \( \Psi_{a,b,R} \) that
     
     \[ \lim\limits_{|y|\to +\infty} I\rho(\Psi_{a,b,R}(y)) = \frac{1}{2}\frac{a^2}{b^2}\int_{\mathbb{R}^N}|\nabla w_{b,\rho}|^2-\frac{1}{p}\frac{a^p}{b^p}\int_{\mathbb{R}^N}| w_{b,\rho}|^p 
     \]
     uniformly for \( R,b > 0 \) bounded.
\end{proof}
In the remainder of this paper, we shall denote by
$$||u||=(\frac{1}{2}\int_{\Omega}|\nabla u|^{2}+|u|^{2}dx)^{\frac{1}{2}},$$
$$||u||_{*}=(\frac{1}{2}\int_{\mathbb{R}^{N}}|\nabla u|^{2}+|u|^{2}dx)^{\frac{1}{2}},$$
$$||u||_{r}=(\int_{\Omega}|u|^{p}dx)^{\frac{1}{r}},$$
$$||u||_{*r}=(\int_{\mathbb{R}^{N}}||u|^{r}dx)^{\frac{1}{r}}$$
the norms in $H_{0}^{1}(\Omega),$ $H_{0}^{1}(\mathbb{R}^{N}),$ $L^{r}(\Omega),$ $L^{r}(\mathbb{R}^{N}),$ $r\in[2,2^*],$ respectively. The inner products in $H_{0}^{1}$ and $L^{2}$ are denoted by $\langle\cdot ,\cdot\rangle$ and $(\cdot ,\cdot)$ respectively.
Assume $\Omega\subset \mathbb{R}^{N}$ $(N\geq 3)$ is an exterior domain with $0\notin \overline \Omega,$ $\lambda\geq 0,$ $\rho\in[\frac{1}{2},1],$ $p\in(2+4/N,2^{*}),$ where $2^{*}=2N/(N-2)$ is critical Sobolev exponent.
$$E_{\rho}(u)=\frac{1}{2}\int_{\Omega}|\nabla u|^{2}dx-\frac{\rho}{p}\int_{\Omega}|x|^{-\alpha}|u|^{p}dx,$$
$$E_{\rho}^{*}(u)=\frac{1}{2}\int_{\mathbb{R}^{N}}|\nabla u|^{2}dx-\frac{\rho}{p}\int_{\mathbb{R}^{N}}|x|^{-\alpha}|u|^{p}dx,$$
$$E_{\lambda,\rho}(u)=\frac{1}{2}\int_{\Omega}|\nabla u|^{2}dx+\frac{\lambda}{2}\int_{\Omega}|u|^{2}dx-\frac{\rho}{p}\int_{\Omega}|x|^{-\alpha}|u|^{p}dx,$$
$$E_{\lambda,\rho}^{*}(u)=\frac{1}{2}\int_{\mathbb{R}^{N}}|\nabla u|^{2}dx+\frac{\lambda}{2}\int_{\mathbb{R}^{N}}|u|^{2}dx-\frac{\rho}{p}\int_{\mathbb{R}^{N}}|x|^{-\alpha}|u|^{p}dx.$$
$$S_{\mu}=\{u\in H_{0}^{1}(\Omega):\int_{\Omega}|u|^{2}dx=\mu\},$$
$$S_{\mu}^{*}=\{u\in H_{0}^{1}(\mathbb{R}^{N}):\int_{\mathbb{R}^{N}}|u|^{2}dx=\mu\}.$$
To show that the family $E_\rho$ enters into the framework of Theorem \ref{monotonous} we firstly need to show that it has a mountain pass geometry on $S_\mu$ uniformly with respect to $\rho\in[\frac{1}{2},1].$

\begin{lemma}\label{c_rho}
    For every $\mu > 0,$ there exist $w_1(\mu), w_2(\mu) \in S_\mu$ independent of $\rho \in \left[\frac{1}{2}, 1\right]$ such that
\[
c_\rho(\mu) := \inf_{\gamma \in \Gamma} \max_{t \in [0,1]} E_\rho(\gamma(t)) > \max\{E_\rho(w_1), E_\rho(w_2)\}, \quad \forall \rho \in \left[\frac{1}{2}, 1\right],
\]
where $\Gamma := \{\gamma \in C([0, 1], S_{\mu}) \mid \gamma \text{ is continuous}, \gamma(0) = w_1, \gamma(1) = w_2\}$.
\begin{proof}
    For any $\mu, k > 0,$ denote
\[
A_{\mu, k} := \{u \in S_{\mu} \mid \int_\Omega |\nabla u|^2 dx < k\},
\]
\[
\partial A_{\mu, k} := \{u \in S_{\mu} \mid \int_\Omega |\nabla u|^2 dx = k\}.
\]
Firstly note that, for any $\mu, k > 0,$ we have $A_{\mu, k} \neq \emptyset$ (and, similarly, $\partial A_{\mu, k} \neq \emptyset$). Indeed, take any function $v \in C_c^\infty(\mathbb{R})$ with $\|v\|_{L^2(\Omega)}^2 = \mu,$ and consider $v_t(x) = t^{N/2} v(tx),$ for $t > 0$. We have that
\[
\|v_{t}\|_{L^2(\Omega)}^2 = \mu \quad \text{and} \quad \|\nabla v_t\|_{L^2(\Omega)}^2 = t^2 \|\nabla v\|_{L^2(\Omega)}^2 \quad \text{for every } t > 0.
\]
Since $\Omega$ is an exterior domain, there exist $R_{0}>0,$ such that $\{x:|x|\geq R_{0}\}\subset \Omega$. Therefore, just let the support of $v$ be contained in $\{x:|x|\geq R_{0}\},$ any function $v_t$ can be regarded as a function in $S_\mu,$ and, in particular, $v_t \in A_{\mu, k}$ for $t$ small enough.
Now, we assume that $|x|\geq r_0>0$ for any $x\in \Omega$ from the assumption $0\notin \Omega,$ and by the Gagliardo–Nirenberg inequality \cite{weinstein1982nonlinear}, we obtain
\[
E_\rho(u) \geq \frac{1}{2} \|\nabla u\|_{L^2(\Omega)}^2 - \frac{r_0^{-\alpha}C_{p,N}}{p} \mu^{\frac{(1-\gamma_{p})p}{2}}\|\nabla u\|_{L^2(\Omega)}^{p\gamma_{p}}, \quad \forall u \in S_\mu. 
\]
Where $\gamma_{p}=\frac{N(p-2)}{2p}.$\\
Then, for any $\mu > 0$ and $u \in \partial A_{\mu, k_0}$ with $k_0 =  \left(\frac{r_0^\alpha p}{4C_{p,N}}\right)^{\frac{2}{p\gamma_{p}-2}} \mu^{-\frac{(1-\gamma_{p})p}{p\gamma_{p}-2}},$ by Lemma \ref{C_{p,N}}, $k_{0}$ denoted as 
\[
k_0=(r_0^\alpha)^{\frac{2}{p\gamma_p-2}}\gamma_p(\frac{p\gamma_p}{4})^{\frac{2}{p\gamma_p-2}}(\frac{C_{0}}{\mu})^{\frac{p-p\gamma_P}{p\gamma_p-2}}C_1,
\]  we have
\begin{equation}\label{tag3.2}
\inf_{u \in \partial A_{\mu, k_0}} E_\rho(u) \geq k_0 \left(\frac{1}{2} - \frac{r_0^{-\alpha}C_{p,N}}{p} \mu^{\frac{(1-\gamma_{p})p}{2}} k_0^{\frac{p\gamma_{p}-2}{2}} \right) =\frac{k_{0}}{4}=: \beta > 0, 
\end{equation}
for every $\rho \in \left[\frac{1}{2}, 1\right]$.

% % Next, observe that for any $u \in A_{\mu, k},$
% \[
% E_\rho(u) \leq \frac{1}{2} \int_\Omega |\nabla u|^2 dx < \frac{1}{2} k.
% \]
% Thus, recalling that \(A_{\mu,k} \neq 0\) for all \(\mu, k > 0,\) it is possible to choose a \(w_1 \in S_\mu\) such that
% \[
% \|\nabla w_1\|_{L^2(\Omega)}^2 \leq \frac{k_0}{2} \quad \text{and} \quad E_p(w_1) \leq \frac{\alpha}{2} \quad \forall \rho \in \left[\frac{1}{2}, 1\right].
% \tag{3.3}
% \]

Moreover, let $w_{b}[y](x):=\Psi_{\sqrt{\mu},b,R_{0}}(y)\in S_{\mu},$ for $b\in\mathbb{R}^+$ and $y\in \mathbb{R}^N,$ where $R_0$ satisfies that $\mathbb{R}^N\backslash{\Omega}\subset\{x\in\mathbb{R}^N:|x|<R_{0}\}:=B(0,R_{0}).$
Connecting Proposition \ref{lambda_{b,}} and Lemma \ref{Psi}, we have 
\begin{align}
    \int_{\Omega}|\nabla w_{b}[y]|^2&=\int_{\mathbb{R}^N}|\nabla \Psi_{\sqrt{\mu},b,R_{0}}(y)|^2\rightarrow \frac{\mu}{b^2}\gamma_p\rho^{-\frac{2}{p-2}}(\frac{C_0}{\rho^{\frac{2}{p-2}}b^2})^{\frac{p-p\gamma_P}{p\gamma_p-2}}C_1, \
\end{align}
 as $|y|\rightarrow +\infty.$ We observe that for $b$ and $|y|$ large enough, we can choose a $b_1$ and a $y_1$ such that $ \int_{\Omega}|\nabla w_{b_1}[y_1]|^2<\frac{k_0}{2},$ so, $E_{\rho}(w_{b_1}[y_1])<\frac{1}{2}\int_{\Omega}|\nabla w_{b_1}[y_1]|^2<\frac{k_0}{4}=\beta.$
Next, We want to find a $b_2$ and a $y_2$ such that
\begin{equation}\label{claimofconstruct}
\int_{\Omega}|\nabla w_{b_2}[y]|^2>2k_0 \ \quad\text{and}\ \quad E_{\rho}(w_{b_2}[y])<\frac{1}{2}\beta.
\end{equation}
To estimate the value of $E_{\rho}(w_{b}[y]),$ we have
\begin{align*}
    &E_{\rho}(w_{b}[y])=\frac{1}{2}\int_{\Omega}|\nabla w_{b}[y]|^{2}dx-\frac{\rho}{p}\int_{\Omega}|x|^{-\alpha}|w_{b}[y]|^{p}dx\\
    &\leq \frac{1}{2}\int_{\Omega}|\nabla w_{b}[y]|^{2}dx-\frac{\rho(2|y|)^{-\alpha}}{p}\int_{\Omega\cap\{|x|\leq 2|y|\}}|w_{b}[y]|^{p}dx\\
    &\quad-\frac{\rho}{p}\int_{\Omega\cap\{|x|> 2|y|\}}|x|^{-\alpha}|w_{b}[y]|^{p}dx\\
    &\leq \frac{1}{2}\int_{\Omega}|\nabla w_{b}[y]|^{2}dx-\frac{\rho(2|y|)^{-\alpha}}{p}\int_{\Omega}|w_{b}[y]|^{p}dx\\
    &\quad+\frac{2\rho(2|y|)^{-\alpha}}{p}\int_{\Omega\cap\{x:|x|> 2|y|\}}|w_{b}[y]|^{p}dx.
\end{align*}
We can choose a $|y'_2| $ from the definition of $E_{\rho}(w_{b}[y])$ such that for any $y:|y|\geq |y'_2|,$ it holds that 
\[
\int_{\Omega\cap\{x:|x|> 2|y|\}}|w_{b}[y]|^{p}dx\leq\frac{1}{4}\int_{\Omega}|w_{b}[y]|^{p}dx,
\]
then for $y:|y|\geq |y'_2|,$ \[
E_{\rho}(w_{b}[y])\leq \frac{1}{2}\int_{\Omega}|\nabla w_{b}[y]|^{2}dx-\frac{\rho(2|y|)^{-\alpha}}{2p}\int_{\Omega}|w_{b}[y]|^{p}dx.
\]
Moreover, from Lemma \ref{Psi}, for $|y|$ large enough, the inequalities $($\ref{claimofconstruct}$)$ are equal to
\begin{align}\label{3.21}
   &\frac{\mu}{b^2}\gamma_p\rho^{-\frac{2}{p-2}}(\frac{C_0}{\rho^{\frac{2}{p-2}}b^2})^{\frac{p-p\gamma_P}{p\gamma_p-2}}C_1>2(r_0^\alpha)^{\frac{2}{p\gamma_p-2}}\gamma_p(\frac{p\gamma_p}{4})^{\frac{2}{p\gamma_p-2}}(\frac{C_{0}}{\mu})^{\frac{p-p\gamma_P}{p\gamma_p-2}}C_1,
   \end{align}
   and \begin{align}\label{3.22}&\quad (\frac{1}{2}\frac{\mu}{b^2}\gamma_p-\frac{(2|y|)^{-\alpha}}{2p}\frac{\mu^{\frac{p}{2}}}{b^p})\rho^{-\frac{2}{p-2}}(\frac{C_0}{\rho^{\frac{2}{p-2}}b^2})^{\frac{p-p\gamma_P}{p\gamma_p-2}}C_1\notag\\
   &\quad\quad<\frac{1}{8}(r_0^\alpha)^{\frac{2}{p\gamma_p-2}}\gamma_p(\frac{p\gamma_p}{4})^{\frac{2}{p\gamma_p-2}}(\frac{C_{0}}{\mu})^{\frac{p-p\gamma_P}{p\gamma_p-2}}C_1.
\end{align}
Furthermore, $($\ref{3.21}$)$ and $($\ref{3.22}$)$ can be reduced to 
\begin{align}
   (\frac{\mu}{\rho^{\frac{2}{p-2}}b^2})^{\frac{p-2}{p\gamma_p-2}}&>2(r_0^\alpha)^{\frac{2}{p\gamma_p-2}}(\frac{p\gamma_p}{4})^{\frac{2}{p\gamma_p-2}}, \\
    \text{and}\quad \quad (\frac{\gamma_p}{2}\frac{\mu}{b^2}-\frac{(2|y|)^{-\alpha}}{2p}\frac{\mu^{\frac{p}{2}}}{b^p})&\rho^{-\frac{2}{p-2}}(\frac{\mu}{\rho^{\frac{2}{p-2}}b^2})^{\frac{p-p\gamma_P}{p\gamma_p-2}}<\frac{1}{8}(r_0^\alpha)^{\frac{2}{p\gamma_p-2}}\gamma_p(\frac{p\gamma_p}{4})^{\frac{2}{p\gamma_p-2}}.\label{E_rho(w_by)}
\end{align}
Let $(\frac{\mu}{\rho^{\frac{2}{p-2}}b^2})^{\frac{p-2}{p\gamma_p-2}}=2s(r_0^\alpha)^{\frac{2}{p\gamma_p-2}}(\frac{p\gamma_p}{4})^{\frac{2}{p\gamma_p-2}},  \ s>1.$ So $($\ref{E_rho(w_by)}$)$ can be reduced to 
\begin{equation}8s-\rho (\frac{r_0}{2|y|})^\alpha(2s)^{\frac{p\gamma_p}{2}}<1.\end{equation}
We can choose a $b_2=b(|y|)$ such that $w_{b_2}[y](x)$ is what we are looking for.
Therefore, we can take $y_{2}$ and $b_{2}$ to make inequalities $($\ref{claimofconstruct}$)$ true, and we set $w_{1}(x):=w_{b_1}[y_1](x),$ $w_{2}(x):=w_{b_2}[y_2](x)\in S_{\mu}.$ which satisfy that
\begin{equation}\label{tag3.4}
\|\nabla w_1\|_{L^2(\Omega)}^2 <\frac{1}{2} k_0 \quad \text{and} \quad E_{\rho}(w_1) < \beta, \quad \forall \rho \in \left[\frac{1}{2}, 1\right].
\end{equation}
\begin{equation}\label{tag3.5}
\|\nabla w_2\|_{L^2(\Omega)}^2 > 2k_0 \quad \text{and} \quad E_{\rho}(w_2) <\frac{1}{2} \beta, \quad \forall \rho \in \left[\frac{1}{2}, 1\right].
\end{equation}

 Let \(\Gamma\) and \(c_{\rho}\) be defined as in the statement of the lemma for our choice of \(w_1\) and \(w_2\); the fact that \(\Gamma \neq 0\) is straightforward, since
\[
\gamma_0(t) := \frac{\mu^{1/2}}{\| (1-t)w_1 + t w_2 \|_{L^2(\Omega)}} \big( (1-t) w_1 + t w_2 \big), \quad t \in [0, 1],
\]
belongs to \(\Gamma\). By $($\ref{tag3.4}$)$ and $($\ref{tag3.5}$),$ we note that for every \(\gamma \in \Gamma\) there exists \(t_\gamma \in [0, 1]\) such that \(\gamma(t_\gamma) \in \partial A_{\mu, k_0},\) by continuity. Therefore, for any \(\rho \in \left[\frac{1}{2}, 1\right],\) we have for every \(\gamma \in \Gamma\)
\[
\max_{t \in [0,1]} E_p(\gamma(t)) \geq E_p(\gamma(t_\gamma)) \geq \inf_{u \in \partial A_{\mu, k_0}} E_p(u) \geq \beta
\]
(see \ref{tag3.2}) and thus \(C_{\rho} \geq \beta,\) while
\[
\max\{E_p(w_1), E_p(w_2)\}  < \beta.
\]
\end{proof}
\end{lemma}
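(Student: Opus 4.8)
The plan is to establish a mountain pass geometry on $S_\mu$ that is \emph{uniform} in $\rho\in[\tfrac12,1]$, out of the Gagliardo--Nirenberg inequality $(\ref{tag2.9})$ and the pointwise bound $|x|^{-\alpha}\le r_0^{-\alpha}$ on $\Omega$, and then to produce the two endpoints $w_1,w_2$ by a scaling argument that uses only the $L^2$-supercriticality $p>2+4/N$ together with the fact that $|x|^{-\alpha}$ is a positive finite constant on fixed compact subsets of $\Omega$. \emph{Step 1 (a $\rho$-uniform barrier).} I would first observe that, extending $u\in S_\mu$ by $0$ to $\mathbb{R}^N$ and using $(\ref{tag2.9})$ with $|x|^{-\alpha}\le r_0^{-\alpha}$,
\[
E_\rho(u)\ \ge\ \tfrac12\|\nabla u\|_{L^2(\Omega)}^2-\tfrac{\rho\,r_0^{-\alpha}C_{p,N}}{p}\,\mu^{\frac{(1-\gamma_p)p}{2}}\,\|\nabla u\|_{L^2(\Omega)}^{p\gamma_p},\qquad\forall\rho\in[\tfrac12,1].
\]
Since $p>2+4/N$ gives $p\gamma_p>2$, the function $t\mapsto\tfrac12t^2-\tfrac{r_0^{-\alpha}C_{p,N}}{p}\mu^{\frac{(1-\gamma_p)p}{2}}t^{p\gamma_p}$ is strictly positive near $0$; writing $A_{\mu,k}:=\{u\in S_\mu:\|\nabla u\|_{L^2(\Omega)}^2<k\}$, $\partial A_{\mu,k}:=\{u\in S_\mu:\|\nabla u\|_{L^2(\Omega)}^2=k\}$ and taking $k_0:=\big(\tfrac{r_0^\alpha p}{4C_{p,N}}\big)^{2/(p\gamma_p-2)}\mu^{-(1-\gamma_p)p/(p\gamma_p-2)}$, I would get from $\rho\le1$ that
\[
\inf_{u\in\partial A_{\mu,k_0}}E_\rho(u)\ \ge\ \tfrac{k_0}{4}=:\beta>0\qquad\text{for all }\rho\in[\tfrac12,1],
\]
with $\partial A_{\mu,k_0}\neq\emptyset$ by dilating any fixed $C_c^\infty$ bump supported in $\{|x|>R_0\}\subset\Omega$.

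\emph{Step 2 ($\rho$-independent endpoints).} For $w_1$ I would fix $v\in C_c^\infty(\mathbb{R}^N)$ with $\operatorname{supp}v\subset\{|x|>R_0\}$, $v\ge0$ and $\|v\|_{L^2}^2=\mu$, and set $w_1:=v_t$ with $v_t(x):=t^{N/2}v(tx)$ and $t\in(0,1]$ so small that $\|\nabla w_1\|_{L^2(\Omega)}^2=t^2\|\nabla v\|_{L^2}^2<k_0/2$; then $w_1\in S_\mu$, $\operatorname{supp}w_1\subset\Omega$ and $E_\rho(w_1)\le\tfrac12\|\nabla w_1\|_{L^2(\Omega)}^2<\beta$ for every $\rho$. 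For $w_2$ I would fix a point $P\in\Omega$ with $|P|>R_0$, a bump $w\in C_c^\infty(B(0,\delta))$ with $w\ge0$, $\delta<|P|-R_0$ and $\|w\|_{L^2}^2=\mu$, and set $w_2:=w_P^\sigma$ with $w_P^\sigma(x):=\sigma^{N/2}w(\sigma(x-P))$, $\sigma\ge1$; then $\operatorname{supp}w_2\subset B(P,\delta/\sigma)\subset\Omega$, $w_2\in S_\mu$, $\|\nabla w_2\|_{L^2(\Omega)}^2=\sigma^2\|\nabla w\|_{L^2}^2$, and the change of variables $y=\sigma(x-P)$ gives
\[
\int_\Omega|x|^{-\alpha}|w_2|^p\,dx=\sigma^{\frac{N(p-2)}{2}}\int_{B(0,\delta)}\big|P+\tfrac{y}{\sigma}\big|^{-\alpha}|w(y)|^p\,dy=\sigma^{\frac{N(p-2)}{2}}\big(|P|^{-\alpha}\|w\|_{L^p}^p+o(1)\big)
\]
as $\sigma\to\infty$. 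Because $N(p-2)/2>2$ and $\rho\ge\tfrac12$, this yields
\[
E_\rho(w_2)\ \le\ \tfrac{\sigma^2}{2}\|\nabla w\|_{L^2}^2-\tfrac{1}{4p}\,\sigma^{\frac{N(p-2)}{2}}|P|^{-\alpha}\|w\|_{L^p}^p\ \longrightarrow\ -\infty\qquad(\sigma\to\infty),
\]
uniformly in $\rho\in[\tfrac12,1]$, so I could fix $\sigma$ so large that $\|\nabla w_2\|_{L^2(\Omega)}^2>k_0$ and $E_\rho(w_2)<\beta$ hold simultaneously for all $\rho\in[\tfrac12,1]$. Both $w_1,w_2$ are then independent of $\rho$ and lie in $S_\mu$.

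\emph{Step 3 (conclusion).} The normalized segment $\gamma_0(t):=\mu^{1/2}\|(1-t)w_1+tw_2\|_{L^2(\Omega)}^{-1}\big((1-t)w_1+tw_2\big)$ is well defined (its squared $L^2$-norm is $\ge(1-t)^2\mu+t^2\mu>0$ on $[0,1]$, since $w_1,w_2\ge0$) and belongs to $\Gamma$, so $\Gamma\neq\emptyset$. For any $\gamma\in\Gamma$, $t\mapsto\|\nabla\gamma(t)\|_{L^2(\Omega)}^2$ is continuous with value $<k_0$ at $t=0$ and $>k_0$ at $t=1$, hence $\gamma(t_\gamma)\in\partial A_{\mu,k_0}$ for some $t_\gamma$, whence $\max_{[0,1]}E_\rho\circ\gamma\ge E_\rho(\gamma(t_\gamma))\ge\beta$ by Step 1; taking the infimum over $\gamma$ gives $c_\rho(\mu)\ge\beta>\max\{E_\rho(w_1),E_\rho(w_2)\}$ for every $\rho\in[\tfrac12,1]$.

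The step I expect to be the real obstacle is the construction of $w_2$: one must exhibit a \emph{single} function with arbitrarily large Dirichlet energy whose $E_\rho$-value stays below the fixed barrier $\beta$, uniformly in $\rho$. This is precisely where both structural hypotheses enter — $p>2+4/N$ makes the dilation exponent $N(p-2)/2$ strictly larger than $2$, so the nonlinear term beats the kinetic term as $\sigma\to\infty$, while $\alpha>0$ together with $0\notin\overline\Omega$ ensures that $|x|^{-\alpha}$ equals the positive finite constant $|P|^{-\alpha}$ (up to $o(1)$) on the shrinking support of $w_2$, so this dominance is not annihilated by the decay of the weight. Everywhere else the uniformity in $\rho$ is essentially automatic, since $\rho$ enters only through a factor confined to the compact interval $[\tfrac12,1]$.
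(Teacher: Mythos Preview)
Your argument is correct, and Steps~1 and~3 coincide with the paper's proof essentially verbatim (same Gagliardo--Nirenberg barrier, same $k_0$ and $\beta=k_0/4$, same normalized-segment path and intermediate-value argument on $\|\nabla\gamma(t)\|_{L^2}^2$).

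Step~2 is where you diverge. The paper constructs $w_1,w_2$ from the map $\Psi_{\sqrt{\mu},b,R_0}(y)$ of Lemma~\ref{Psi}, i.e., from normalized, truncated translates of the scaled ground state $w_{b,\rho}$, and then selects parameters $(b_1,y_1)$ and $(b_2,y_2)$ through a rather detailed computation invoking the explicit formulas of Proposition~\ref{lambda_{b,}} and Lemma~\ref{C_{p,N}}. You instead take arbitrary nonnegative $C_c^\infty$ bumps supported in $\{|x|>R_0\}$ and use the dilations $v_t(x)=t^{N/2}v(tx)$ (small $t$) and $w_P^\sigma(x)=\sigma^{N/2}w(\sigma(x-P))$ (large $\sigma$, fixed $P\in\Omega$). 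Your route is considerably more elementary: it bypasses the entire ground-state machinery of Section~\ref{sec3} and makes transparent exactly which structural hypotheses are used (mass supercriticality $N(p-2)/2>2$ for the $\sigma\to\infty$ limit, and $0\notin\overline{\Omega}$ so that $|P+y/\sigma|^{-\alpha}\to|P|^{-\alpha}>0$ on the shrinking support). The paper's construction gives slightly finer control of the constants in terms of $C_0,C_1$, but this precision is not exploited downstream; the only properties of $w_1,w_2$ used later (in Proposition~\ref{prop3.5}) are $w_1,w_2\ge 0$ and the $\rho$-uniform inequalities, all of which your construction delivers.
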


\section{Compactness}\label{sec4}

\begin{lemma}\label{lemma3.2} For any \(\lambda<0,\) there exists a subspace \(Y\) of \(H_0^{1}(\Omega)\) with \(\dim Y=3\) such that
\[
\int_{\Omega}|\nabla w|^{2}\,dx+\lambda\int_{\Omega}|w|^{2}\,dx\leq\frac{\lambda}{2}\|w\|^2,\qquad\forall\,w\in Y.
\]

\begin{proof} Recalling that $B(0,R_0)^c\subset\Omega.$ Take \(\phi\in C_{0}^{\infty}(\mathbb{R}^{N})\) with \(\operatorname{supp}\phi\subset\{x\in\mathbb{R}^N:2R_0\leq|x|\leq3R_0\}\) such that \(\int_{\mathbb{R}^N}|\phi|^{2}dx=1\). We define the following functions for \(0<\sigma\leq1\):
\begin{equation*}
\psi_{j,\sigma}(x)=\sigma^{\frac{N}{2}}\phi(\sigma x-7R_0e_j),\quad \text{j=1,2,3}, 
\end{equation*}
where, $\{e_j\}_{j=1}^N$  is the standard orthonormal basis of $\mathbb{R}^N$. Clearly, for any \(0<\sigma\leq1,\) \(\int_{\Omega}|\psi_{j,\sigma}|^{2}\,dx=1\) for all \(j=1,2,3,,\) and the three functions are mutually orthogonal in \(H_0^{1}(\Omega),\) having disjoint supports.
Assume that \(w=\sum\limits_{j=1}^{3}\theta_{j}\psi_{j,\sigma}\). Then
\begin{align*}
\int_{\Omega}|\nabla w|^{2}\,dx+\lambda\int_{\Omega}|w|^{2}\,dx &=\sigma^{2}\Big{(}\sum\limits_{j=1}^{3}\theta_{j}^{2}\int_{\Omega}|\nabla\phi|^{2}\,dx\Big{)}+\lambda\Big{(}\sum\limits_{j=1}^{3}\theta_{j}^{2}\int_{\Omega}|\phi|^{2}\,dx\Big{)} \\
&=(\sigma^{2}\zeta+\lambda)\sum\limits_{j=1}^{3}\theta_{j}^{2},
\end{align*}
where \(\zeta:=\int_{\Omega}|\nabla\phi|^{2}\,dx>0\). Similarly, \(\|w\|^{2}=(\sigma^{2}\zeta+1)\sum_{j=1}^{3}\theta_{j}^{2}\). Therefore, provided that \(w\neq 0,\)
\[
\frac{\int_{\Omega}|\nabla w|^{2}\,dx+\lambda\int_{\Omega}|w|^{2}\,dx}{\|w\|^{2}}=\frac{\sigma^{2}\zeta+\lambda}{\sigma^{2}\zeta+1} \leq \frac{\lambda}{2}
\]
for every \(\sigma>0\) sufficiently small, and for every \(\theta_{1},\theta_{2},\theta_{3}\in\mathbb{R}\).
\end{proof}
\end{lemma}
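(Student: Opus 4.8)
The plan is to take $Y$ to be the span of three functions that are simultaneously (a) supported far from the origin, so that they lie in $\Omega$, (b) supported in pairwise disjoint regions, so that every quadratic form appearing in the statement is diagonal on $Y$, and (c) very ``spread out'', so that their Dirichlet energy is negligible compared with their $L^2$ mass. Since $\Omega$ is an exterior domain with $0\notin\overline\Omega$, there is $R_0>0$ with $\{x:|x|\geq R_0\}\subset\Omega$, so it suffices to build the three functions inside this region.

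Concretely, I would fix a single bump $\phi\in C_c^\infty(\mathbb{R}^N)$ with $\int_{\mathbb{R}^N}|\phi|^2=1$ and $\operatorname{supp}\phi\subset\{2R_0\leq|x|\leq 3R_0\}$, set $\zeta:=\int_{\mathbb{R}^N}|\nabla\phi|^2>0$, and for a small parameter $0<\sigma\leq 1$ define the rescaled-and-translated copies
\[
\psi_{j,\sigma}(x):=\sigma^{N/2}\phi(\sigma x-7R_0e_j),\qquad j=1,2,3,
\]
where $e_1,e_2,e_3$ are orthonormal in $\mathbb{R}^N$ (available since $N\geq 3$). A direct computation gives $\int_\Omega|\psi_{j,\sigma}|^2=1$ and $\int_\Omega|\nabla\psi_{j,\sigma}|^2=\sigma^2\zeta$. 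Two elementary verifications are needed: (i) $\operatorname{supp}\psi_{j,\sigma}$ is the annulus of radii $2R_0/\sigma$ and $3R_0/\sigma$ about $7R_0e_j/\sigma$, hence contained in $\{|x|\geq 4R_0/\sigma\}\subset\{|x|\geq R_0\}\subset\Omega$ for every $\sigma\leq 1$; and (ii) the three centers lie at mutual distance $7\sqrt2\,R_0/\sigma$, which exceeds $6R_0/\sigma$ (twice the radius of the ball containing each support), so the supports are pairwise disjoint for every $\sigma\leq 1$. Consequently the $\psi_{j,\sigma}$ are mutually orthogonal both in $L^2(\Omega)$ and in $H_0^1(\Omega)$.

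With $Y:=\operatorname{span}\{\psi_{1,\sigma},\psi_{2,\sigma},\psi_{3,\sigma}\}$, any $w=\sum_{j=1}^3\theta_j\psi_{j,\sigma}\in Y$ satisfies, by disjointness of supports,
\[
\int_\Omega|\nabla w|^2+\lambda\int_\Omega|w|^2=(\sigma^2\zeta+\lambda)\sum_{j=1}^3\theta_j^2,\qquad \|w\|^2=\Bigl(\tfrac12\sigma^2\zeta+1\Bigr)\sum_{j=1}^3\theta_j^2,
\]
so the relevant Rayleigh quotient equals $(\sigma^2\zeta+\lambda)\big/(\tfrac12\sigma^2\zeta+1)$, independently of $(\theta_1,\theta_2,\theta_3)$. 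Since $\lambda<0$ we have $\lambda<\lambda/2$, and this quotient tends to $\lambda$ as $\sigma\to0^+$; in particular it is $\leq\lambda/2$ once $\sigma>0$ is chosen small enough. Fixing such a $\sigma$ produces the desired three-dimensional subspace $Y$ and the inequality of the lemma follows.

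I do not anticipate a genuine analytic obstacle: the whole argument reduces to a short explicit computation. The only step that requires any care is the geometric one --- arranging the scaling and the translation distances so that, uniformly as $\sigma\to0$, the three bumps remain inside $\Omega$ and stay mutually disjoint --- and this is exactly where the exterior structure $\{|x|\geq R_0\}\subset\Omega$ is used, together with $N\geq 3$, which supplies three orthogonal directions along which to push the supports apart.
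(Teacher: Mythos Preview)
Your proposal is correct and follows essentially the same construction as the paper's proof: the same bump $\phi$, the same rescaled-and-translated functions $\psi_{j,\sigma}(x)=\sigma^{N/2}\phi(\sigma x-7R_0e_j)$, and the same Rayleigh-quotient computation showing the ratio tends to $\lambda<\lambda/2$ as $\sigma\to0^+$. The only visible difference is the factor in $\|w\|^2$ (you write $\tfrac12\sigma^2\zeta+1$, the paper writes $\sigma^2\zeta+1$), reflecting a different reading of the paper's norm convention; this is immaterial since both quotients converge to $\lambda$ as $\sigma\to0$.
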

The main aim of this section is to prove the following result. This is also the proof process of Theorem \ref{submass}.

\begin{proposition}\label{prop3.5}
For any $\mu>0$ and almost every $\rho\in [\frac{1}{2},1],$ there exists a positive $(\mu,\rho)$ sub-mass mountain pass solution $u_{\rho}\in H_0^1(\Omega)$ of equation $($\ref{1.1rho}$)$.
Moreover, $m(u_{\rho})\leq 2$. 
\end{proposition}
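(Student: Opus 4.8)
The plan is to apply the abstract machinery assembled in Sections \ref{sec2}--\ref{sec3} to the family $E_\rho$ on a closed ball constraint, and then upgrade the resulting weak limit to a genuine sub-mass solution by combining Theorem \ref{keepappromaxiateindex}, Theorem \ref{unot=0}, and Lemma \ref{lemma2.5}. Concretely: by Lemma \ref{c_rho} the family $E_\rho$ has a uniform mountain pass geometry on $S_\mu$ with endpoints $w_1,w_2$ independent of $\rho$, and one checks that $E_\rho(u)=A(u)-\rho B(u)$ with $A(u)=\frac12\int_\Omega|\nabla u|^2$, $B(u)=\frac1p\int_\Omega|x|^{-\alpha}|u|^p\ge 0$ satisfies the coercivity alternative \eqref{assumption2.2} (here $A(u)\to+\infty$ as $\|u\|\to\infty$ on the constraint, since the $L^2$-mass is fixed) and the $\xi$-Hölder regularity hypothesis. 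Hence Theorem \ref{monotonous} applies: for a.e. $\rho\in[\frac12,1]$ there is a bounded sequence $\{u_n\}\subset S_\mu$ with $E_\rho(u_n)\to c_\rho(\mu)$, $\|E_\rho'|_{S_\mu}(u_n)\|_*\to 0$, and $\tilde m_{\zeta_n}(u_n)\le 1$ for some $\zeta_n\to 0^+$. From the remarks following Theorem \ref{monotonous} we get almost Lagrange multipliers $\lambda_n=-\frac1\mu(E_\rho'(u_n)\cdot u_n)$ with $E_\rho'(u_n)+\lambda_n(u_n,\cdot)\to 0$ in $H^{-1}(\Omega)$.

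Next I would pin down the sign and limit of $\lambda_n$. Since $c_\rho(\mu)>0$ and $\{u_n\},\{\lambda_n\}$ are bounded (boundedness of $\lambda_n$ from the Gagliardo--Nirenberg inequality and boundedness of $u_n$), Lemma \ref{lemma2.5} applies with $\Phi_\rho=E_\rho$: its hypothesis (i) is exactly the Morse bound $\tilde m_{\zeta_n}(u_n)\le 1\le 2$, and hypothesis (ii) is supplied by Lemma \ref{lemma3.2}, which produces for every $\lambda<0$ a three-dimensional subspace $Y\subset H_0^1(\Omega)$ on which $\int|\nabla w|^2+\lambda\int|w|^2\le\frac\lambda2\|w\|^2$; since the $|x|^{-\alpha}|u|^{p-2}u$ term is a compact lower-order perturbation, $E_\rho''(u_n)[\varphi,\varphi]+\lambda|\varphi|^2\le -a\|\varphi\|^2$ on $Y$ for $n$ large. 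Lemma \ref{lemma2.5} then gives $\liminf_{n\to\infty}\lambda_n\ge 0$; passing to a subsequence, $\lambda_n\to\lambda\ge 0$. Now I take $u_n\rightharpoonup u$ in $H_0^1(\Omega)$ (possible by boundedness). The decomposition $E_\rho=A-\rho B$ satisfies conditions $(A)$, $(B)$ of Theorem \ref{unot=0}: $0\le A(u)=\frac12 A'(u)u$, $A'(\cdot)\cdot$ is weakly l.s.c. by weak l.s.c. of the Dirichlet norm, and $B'(\cdot)\cdot$ is weakly sequentially continuous because $|x|^{-\alpha}$ is bounded on $\Omega$ (as $|x|\ge r_0$) so $\int_\Omega|x|^{-\alpha}|u|^p$ behaves like the locally compact $\int|u|^p$ for $p<2^*$ — here one uses that $\Omega$ is an exterior domain and Rellich on balls plus decay control via the $|x|^{-\alpha}$ weight, or the standard splitting trick. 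Hence $u\ne 0$.

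Then I would promote $u$ to a solution and control its Morse index. Since $B'(u_n)u_n\to B'(u)u$ and the argument inside Theorem \ref{unot=0} gives $A'(u_n)u_n\to A'(u)u$, i.e. $\|\nabla u_n\|_2\to\|\nabla u\|_2$, we get strong convergence $u_n\to u$ in $H_0^1(\Omega)$; in particular $u\in S_\mu$ and, passing to the limit in $E_\rho'(u_n)+\lambda_n(u_n,\cdot)\to 0$, $u$ solves $-\Delta u+\lambda u=\rho|x|^{-\alpha}|u|^{p-2}u$ with $\lambda\ge 0$ and $E_\rho(u)=c_\rho(\mu)$. Up to replacing $u$ by $|u|$ (which does not increase $E_\rho$ and keeps the constraint) and invoking elliptic regularity and the strong maximum principle, $u>0$ in $\Omega$, so $u_\rho:=u$ is the desired positive $(\mu,\rho)$ sub-mass mountain pass solution. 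For the Morse bound: embedding $E\hookrightarrow H$ with $S=C_c^\infty(\Omega)$ densely embedded in $H_0^1(\Omega)$, the functional $E_\rho$ satisfies $(C)$ (weak sequential continuity of $E_\rho'(\cdot)(w)$ and $E_\rho''(\cdot)[w,w]$ for fixed test $w$, again by boundedness of $|x|^{-\alpha}$ and local compactness) and $(U)$ (uniform convergence under dense approximation, by interpolation and Sobolev); thus Theorem \ref{keepappromaxiateindex} with $M=1$ yields $\tilde m_0(u)\le 1$, i.e. the Morse index of $u$ as a constrained critical point is at most $1$. Finally, the relation between the constrained Morse index on $S_\mu$ and the free Morse index $m(u)$ of the solution $u$ of $-\Delta u+\lambda u=\rho|x|^{-\alpha}|u|^{p-2}u$ costs at most one extra dimension (the constraint direction $u$ itself, on which $E_{\lambda,\rho}''(u)[u,u]=(2-p)\rho\int|x|^{-\alpha}|u|^p<0$), giving $m(u_\rho)\le \tilde m_0(u)+1\le 2$.

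\emph{Main obstacle.} The delicate point is the compactness, i.e. ruling out vanishing/dichotomy of $\{u_n\}$ so that the weak limit is nonzero and the convergence is strong. The leverage here is exactly the inhomogeneity: the weight $|x|^{-\alpha}$ is bounded on $\Omega$ but genuinely varies, so $B$ is weakly sequentially continuous (unlike the translation-invariant case $\alpha=0$ treated in \cite{appolloni2025normalizedschrodingerequationsmasssupercritical,zhang2022normalized}), which is what makes Theorem \ref{unot=0} deliver $u\ne 0$; verifying this weak continuity carefully — controlling the mass of $|u_n|^p$ at infinity using the decay $|x|^{-\alpha}$ together with the fixed $L^2$-bound — is the technical heart of the proof, and it is also where the a.e.-$\rho$ restriction (inherited from Theorem \ref{monotonous}) is genuinely used, since only then do we have the boundedness of $\{u_n\}$ for free.
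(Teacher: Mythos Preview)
Your overall strategy matches the paper's: apply Theorem \ref{monotonous}, use Lemmas \ref{lemma3.2} and \ref{lemma2.5} to get $\lambda_\rho\ge 0$, establish nontriviality via the weighted compactness (you package this through Theorem \ref{unot=0}, the paper does it by hand --- these are equivalent), and read off the Morse bound from Theorem \ref{keepappromaxiateindex}. However, there is a genuine gap in your compactness step. From $A'(u_n)u_n\to A'(u)u$ you correctly get $\|\nabla u_n\|_2\to\|\nabla u\|_2$, hence $\nabla u_n\to\nabla u$ in $L^2$; but this does \emph{not} imply $u_n\to u$ strongly in $H_0^1(\Omega)$, because on an exterior domain there is no Poincar\'e inequality and the $L^2$-part of the norm is not controlled. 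In particular your claim ``$u\in S_\mu$'' is unjustified: when $\lambda_\rho=0$ one only obtains $\|u_\rho\|_2^2\le\mu$ by weak lower semicontinuity. This is precisely why the paper works with \emph{sub-mass} solutions $u\in S_\mu\cup S_\mu^-$ --- the possibility $\|u_\rho\|_2^2<\mu$ cannot be excluded at this stage, and is only ruled out later (Theorem \ref{asymtotic}, Proposition \ref{mainproposition}) by showing $\lambda>0$ for small $\mu$. Fortunately the proposition only asks for a sub-mass solution, and $E_\rho(u_\rho)=c_\rho(\mu)$ follows from the convergence of $\|\nabla u_n\|_2$ and $\int_\Omega|x|^{-\alpha}|u_n|^p$ alone, so once you drop the overclaim the argument survives.

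A second, smaller issue: positivity should be arranged at the level of the sequence, not the limit. The paper takes $u_n\ge 0$ from the start (using $E_\rho(|u|)=E_\rho(u)$, $w_1,w_2\ge 0$, and continuity of $u\mapsto|u|$), so that the weak limit automatically satisfies $u_\rho\ge 0$ and the strong maximum principle gives $u_\rho>0$. Replacing $u$ by $|u|$ \emph{after} passing to the limit is problematic: if $u$ were sign-changing, $|u|$ need not be a solution, and the Morse bound from Theorem \ref{keepappromaxiateindex} is for $u$, not $|u|$. Also, your justification of hypothesis (ii) in Lemma \ref{lemma2.5} (``compact lower-order perturbation'') is not quite the point --- what is used is simply $-(p-1)\rho|x|^{-\alpha}|u_n|^{p-2}\varphi^2\le 0$, so $E_\rho''(u_n)[\varphi,\varphi]+\lambda|\varphi|_2^2\le\int_\Omega|\nabla\varphi|^2+\lambda|\varphi|^2$, and then Lemma \ref{lemma3.2} applies directly.
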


\begin{proof}
 We apply Theorem \ref{monotonous} to the family of functionals $E_{\rho},$ with $E=H_0^{1}(\Omega),$ $H=L^{2}(\Omega),$ $S_{\mu}$ and $\Gamma$ defined in Lemma \ref{c_rho}. Setting
\[
A(u)=\frac{1}{2}\int_{\Omega}|\nabla u|^{2}\,dx \quad \text{and} \quad B(u)=\frac{\rho}{p}\int_{\Omega}|x|^{-\alpha}|u|^{p}\,dx,
\]
Assumption $($\ref{assumption2.2}$)$ holds, since we have that
\[
u\in S_\mu,\,\|u\|\to+\infty \quad \implies \quad A(u)\to+\infty.
\]

Let $E^{\prime}_{\rho}$ and $E^{\prime\prime}_{\rho}$ denote respectively the first and second derivatives of $E_{\rho}$. Clearly, $E^{\prime}_{\rho}$ and $E^{\prime\prime}_{\rho}$ are both of class $C^{1},$ and hence locally H\"older continuous, on $S_\mu$.

Thus, taking into account Lemma \ref{c_rho}, by Theorem \ref{monotonous} and the considerations just after it, for almost every $\rho\in[1/2,1],$ there exist a bounded sequence $\{u_{n,\rho}\}\subset S_\mu,$ that we shall denote simply by $\{u_{n}\}$ from now on, and a sequence $\{\zeta_{n}\}\subset\mathbb{R}^{+}$ with $\zeta_{n}\to 0^{+},$ such that
\begin{equation}\label{tag3.9}
E^{\prime}_{\rho}(u_{n})+\lambda_{n}u_{n}\to 0 \quad \text{in }  (H_0^1(\Omega))',
\end{equation}
where
\begin{equation}\label{tag3.10}
\lambda_{n}:=-\frac{1}{\mu}E^{\prime}_{\rho}(u_{n})u_{n}.
\end{equation}

Moreover, if the inequality
\begin{equation}\label{tag3.11}
\int_{\Omega}\Bigl{[}|\nabla\varphi|^{2}+(\lambda_{n}-(p-1)\rho|x|^{-\alpha}|u_{n}|^{p-2})\,\varphi^{2}\Bigr{]}\,dx=E^{\prime\prime}_{\rho}(u_{n})[\varphi,\varphi]+\lambda_{n}\|\varphi\|^{2}_{2}<-\zeta_{n}\|\varphi\|^{2}
\end{equation}
holds for any $\varphi\in W_{n}\setminus\{0\}$ in a subspace $W_{n}$ of $T_{u_{n}}S_{\mu},$ then the dimension of $W_{n}$ is at most 1. In addition, since $u\in S_\mu \implies |u|\in S_{\mu},$ $w_{1},w_{2}\geq 0,$ the map $u\mapsto|u|$ is continuous, and $E_{\rho}(u)=E_{\rho}(|u|),$ it is possible to choose $\{u_{n}\}$ with the property that $u_{n}\geq 0$ on $\Omega.$

    The sequence $\{u_{n}\}$ is bounded, it follows by \ref{tag3.10} that $\lambda_n$ is bounded. Then passing to a subsequence, there exists $\lambda_\rho$ such that $\lim\limits_{n\rightarrow\infty}\lambda_{n}=\lambda_\rho$. 
        Moreover, there exists $u_{\rho}\in H_{0}^{1}(\Omega)$ such that 
        \begin{align}
            u_{n}&\rightharpoonup u_{\rho}\ in \ H_{0}^{1}(\Omega),\label{weak}\\
            u_{n}&\rightharpoonup u_{\rho}\ in \ L^{r}(\Omega),r\in[2,2^{*}],\label{weak Lp}\\
            u_{n}&\rightarrow u_{\rho} \ in \ L_{loc}^{r}(\Omega),r\in[1,2^{*}),\label{local}\\
            u_{n}&(x)\rightarrow u_{\rho}(x)\  for\ a.e.x \in \Omega,\label{a.e.}
        \end{align}
        which implies that $u_{\rho}\geq 0.$\\
     Let us firstly prove that $E_{\lambda_\rho,\rho}'(u_{\rho})=E_{\rho}'(u_{\rho})+\lambda_\rho (u_n,\cdot)=0.$ Noting that $C_{0}^{\infty}(\Omega)$ is dense in $H_{0}^{1}(\Omega),$ it suffices to check that $E_{\lambda_\rho,\rho}'(u_{\rho})\phi=0$ for all $\phi\in C_{0}^{\infty}(\Omega).$ And we have 
     \begin{align}
        E_{\lambda_n,\rho}'(u_{n})\phi&-E_{\lambda_\rho,\rho}'(u_{\rho})\phi=\int_{\Omega}\nabla(u_{n}-u_{\rho})\nabla\phi dx+\lambda_\rho\int_{\Omega}  (u_{n}-u_{\rho})\phi \notag\\
        &(\lambda_n-\lambda_\rho)\int_{\Omega}u_n\phi-\rho\int_{\Omega}|x|^{-\alpha}(|u_{n}|^{p-1}-|u_{\rho}|^{p-1})\phi dx\rightarrow 0,
     \end{align}
     which is determined by $($\ref{weak}$)$ and $($\ref{local}$)$. Thus recalling that $E_{\lambda_n,\rho}'(u_{n})\rightarrow 0,$ we indeed have $E_{\lambda_\rho,\rho}'(u_{\rho})= 0.$ Thus 
     \begin{align}\label{identical equation}
          \int_{\Omega}|\nabla u_{\rho}|^2+\lambda_\rho |u_{\rho}|^2dx=\rho\int_{\Omega}|x|^{-\alpha}|u_{\rho}|^{p} dx
     \end{align}
     
     Then we need to show that $\lambda_\rho\geq0.$ Here the Morse type information $($\ref{tag3.11}$)$ proves decisive. Since the codimension of \( T_{u_n} S_\mu \) is 1, we infer that if the inequality $($\ref{tag3.11}$)$ holds for every \( \varphi \in V_n \setminus \{0\} \) for a subspace \( V_n \) of \( H_0^1 (\Omega) ,\) then the dimension of \( V_n \) is at most 2. On the other hand, by Lemma \ref{lemma3.2}, we have that for every \( \lambda < 0 ,\) there exist a subspace \( Y \) of \( E \) with \( \dim Y = 3 \) and \( a > 0 \) such that, for \( n \in \mathbb{N} \) large,
\[
E_\rho''(u_n)[\varphi, \varphi] + \lambda \|\varphi\|_2^2 \leq \int_{\Omega} |\nabla\varphi|^2 dx + \lambda \int_{\Omega} |\varphi|^2 \leq -a \|\varphi\|^2,
\]
for all \( \varphi \in Y \setminus \{0\} \).
Therefore, Lemma \ref{lemma2.5} implies that \( \lambda_\rho \geq 0 \).

     To prove that $u_\rho\not\equiv0$. From $($\ref{tag3.9}$)$ and the fact $\lambda_n\rightarrow \lambda_\rho,$ by $E_{\lambda_\rho,\rho}'(u_{\rho})= 0,$ we deduce that when $n\to\infty,$
     \begin{align}\label{compact}
         \int_{\Omega}|\nabla(u_{n}-u_{\rho})|^2+\lambda_\rho |(u_{n}-u_{\rho})|^2dx=\rho\int_{\Omega}|x|^{-\alpha}(|u_{n}|^{p}-|u_{\rho}|^{p}) dx+o(1).
     \end{align}
    Considering $\int_{\Omega}|x|^{-\alpha}(|u_{n}|^{p}-|u_{\rho}|^{p}) dx,$ for $R>R_0$ large enough we have 
    \[
    \int_{\Omega}|x|^{-\alpha}(|u_{n}|^{p}-|u_{\rho}|^{p}) dx=(\int_{\Omega\cap\{|x|\leq R\}}+\int_{\Omega\cap\{|x|>R\}})|x|^{-\alpha}(|u_{n}|^{p}-|u_{\rho}|^{p}) dx.
    \]
So for any $k\in\mathbb{N},$ supposing the uniform bound of $\int_\Omega |u_n|^pdx$ does not exceed $M,$ there is $R_k$ such that 
\[\int_{\Omega\cap\{|x|>R\}}|x|^{-\alpha}\bigg{|}|u_{n}|^{p}-|u_{\rho}|^{p}\bigg{|} dx\leq {2M}|R_k|^{-\alpha}\leq 2^{-k},\quad \forall n\in\mathbb{N},\]
and by Rellich imbedding theorem, there is $n_k\in\mathbb{N}$ such that \[
\int_{\Omega\cap\{x:|x|\leq R_k\}}|x|^{-\alpha}\bigg{|}|u_{n_k}|^{p}-|u_{\rho}|^{p}\bigg{|} dx<2^{-k}.
\]
   Then we infer that \[
   \int_{\Omega}|x|^{-\alpha}\bigg{|}|u_{n_k}|^{p}-|u_{\rho}|^{p}\bigg{|} dx\to 0,\quad \text{as}\quad k\to \infty,
   \]
   furthermore, by $($\ref{compact}$),$ we have 
   \[
    \int_{\Omega}|\nabla(u_{n_k}-u_{\rho})|^2+\lambda_\rho |(u_{n_k}-u_{\rho})|^2dx\to 0,\quad \text{as}\quad k\to \infty.
   \]
 If $\lambda_\rho>0,$ we have $u_{n_k}\to u_\rho,$ so $u_\rho\not\equiv0$ from the condition $u_{n_k}\in S_\mu$. If $\lambda_\rho=0,$ from Theorem \ref{monotonous} $(i)$ we know 
 \begin{align}\label{c_rholambda0}
 c_\rho&=\lim\limits_{k\to\infty}\frac{1}{2}\int_{\Omega}|\nabla(u_{n_k})|^2-\frac{\rho}{p}\int_{\Omega}|x|^{-\alpha}|u_{n_k}|^{p}\notag\\
 &=\frac{1}{2}\int_{\Omega}|\nabla(u_{\rho})|^2-\frac{\rho}{p}\int_{\Omega}|x|^{-\alpha}|u_{\rho}|^{p}>\alpha>0,\end{align}
 So $u_\rho\not\equiv0$. And from Theorem \ref{keepappromaxiateindex}, we know $\tilde{m}_0(u_\rho)\leq 1.$ \\
 Moreover, when $\lambda_\rho=0,$ we also have when $k\to\infty,$
 \[
\int_{\Omega}|\nabla(u_{n_k})|^2\to\int_{\Omega}|\nabla(u_{\rho})|^2\quad\text{and}\quad \int_{\Omega}|x|^{-\alpha}|u_{n_k}|^{p}\to \int_{\Omega}|x|^{-\alpha}|u_{\rho}|^{p}.
 \]
 But there is only an upper bounded estimate for the $L^2$ norm of $u_\rho,$ that is $$\int_{\Omega}|u_{\rho}|^2\leq\int_{\Omega}|u_{n_k}|^2=\mu.$$
 Hence, $u_\rho$ is a positive $(\mu,\rho)$ sub-mass mountain pass solution for $\lambda_\rho\geq0.$ Proof completed. 
\end{proof}

\section{Blow-up analysis}\label{sec5}
From Proposition \ref{prop3.5}, there exists a sequence \( \rho_n \to 1^- ,\) and the positive $(\mu,\rho_n)$ sub-mass mountain pass solution \( u_{\rho_n} \) of \( E_{\rho_n}  \) for $\lambda_{\rho_n}\geq0,$ lying at the mountain pass level \( c_{\rho_n} ,\) with a Morse index \( m(u_{\rho_n}) \) lesser or equal to 2. To obtain the existence of solutions for equations $($\ref{1.1})-(\ref{1.2}$),$ at a strictly positive energy level, it clearly suffices to show that \(\{u_{\rho_n}\}\) converges. In this direction the key point is to show that \(\{u_{\rho_n}\}\) is bounded in \( H_0^1 (\Omega) \). For a more comprehensive discussion on the blow-up analysis, see \cite{Borthwick_2023,EspositoPetralla2011,pierotti2017normalized}.

By Lemma \ref{c_rho} and the monotonicity of \( c_\rho ,\)
\[E_1 (w_1) \leq E_\rho (w_1) \leq c_\rho \leq c_{1/2}, \quad \forall \rho \in \left[ \frac{1}{2}, 1 \right],\]
which implies that \( c_{\rho_n} \) is bounded. In addition, thanks to the condition
\[\int_\Omega \left( |\nabla u_n|^2 + \lambda_n u_n^2 \right) dx = \rho_n\int_\Omega |x|^{-\alpha}|u_n|^p dx,\]
whence it follows that
\[\left( \frac{1}{2} - \frac{1}{p} \right) \int_\Omega |\nabla u_n|^2 dx \leq c_n + \frac{\lambda_n \mu}{p}.\]
Therefore, if \(\{\lambda_n\}\) is bounded, then \(\{u_{\rho_n}\}\) is bounded as well. We shall thus assume that \(\lambda_n \to +\infty,\) up to extraction of a subsequence and, in order to reach a contradiction, develop a blow-up analysis for \(\{u_{\rho_n}\}\).

Precisely, writing for simplicity \(u_n := u_{\rho_n}\) for \(\rho_n \to 1^-\). Using standard regularity arguments, we obtain \(u_{n}\in C^{2}(\Omega)\). And by the maximum principle, we also have that $u_n>0$ in $\Omega.$ We consider the behavior of a sequence of solutions \(\{u_n\} \in H_0^1(\Omega)\) of the following problem:
\begin{equation}\label{rhoto1}\begin{cases}
-\Delta u_n + \lambda_n u_n = \rho_n |x|^{-\alpha} u_n^{p-1} & \text{on } \Omega, \\
u_n > 0 & \text{on } \Omega, \\
u_n = 0 & \text{on } \partial\Omega,
\end{cases}\end{equation}
where \(m(u_n) \leq 2\) for all \(n \in \mathbb{N},\) and it is assumed that \(\lambda_n \to +\infty\).

Firstly, we note that the local maximum points of \(u_n\) have the following estimate.

\begin{lemma}\label{localmax}
    
 Let \(x_n \in \Omega\) be a local maximum point of \(u_n\). Then 
\[u_n(x_n) \geq r_0^{\frac{\alpha}{p-2}}\lambda_n^{\frac{1}{p-2}},\]
where $r_0=\inf\limits_{\Omega}|x|.$
\end{lemma}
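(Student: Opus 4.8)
The plan is to work at a local maximum point $x_n$ of $u_n$, where the sign of $\Delta u_n$ is controlled. Since $x_n$ is a local maximum of the $C^2$ function $u_n$, we have $-\Delta u_n(x_n) \geq 0$. Plugging this into the equation \eqref{rhoto1} evaluated at $x_n$ gives
\[
\lambda_n u_n(x_n) \leq -\Delta u_n(x_n) + \lambda_n u_n(x_n) = \rho_n |x_n|^{-\alpha} u_n(x_n)^{p-1}.
\]
Since $u_n(x_n) > 0$, we may divide by $u_n(x_n)$ to obtain $\lambda_n \leq \rho_n |x_n|^{-\alpha} u_n(x_n)^{p-2}$, i.e.
\[
u_n(x_n)^{p-2} \geq \frac{\lambda_n}{\rho_n |x_n|^{-\alpha}} = \frac{\rho_n^{-1}|x_n|^{\alpha}}{1}\,\lambda_n.
\]

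Next I would remove the dependence on $\rho_n$ and $|x_n|$. Since $\rho_n \in [\tfrac12,1]$ we have $\rho_n^{-1} \geq 1$, so $u_n(x_n)^{p-2} \geq |x_n|^{\alpha}\lambda_n$. Because $x_n \in \Omega$ and $r_0 = \inf_{\Omega}|x| > 0$ (this is the standing assumption $0 \notin \overline{\Omega}$, which guarantees $|x| \geq r_0$ for all $x \in \Omega$), we have $|x_n|^{\alpha} \geq r_0^{\alpha}$ since $\alpha > 0$. Therefore $u_n(x_n)^{p-2} \geq r_0^{\alpha}\lambda_n$. Taking the $(p-2)$-th root of both sides — legitimate since $p > 2+4/N > 2$ so the exponent is positive, and both sides are positive — yields
\[
u_n(x_n) \geq \bigl(r_0^{\alpha}\lambda_n\bigr)^{\frac{1}{p-2}} = r_0^{\frac{\alpha}{p-2}}\lambda_n^{\frac{1}{p-2}},
\]
which is the claimed estimate.

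There is essentially no serious obstacle here; the one point requiring a word of care is the justification that $-\Delta u_n(x_n) \geq 0$ at a local maximum, which uses the $C^2$ regularity of $u_n$ (already noted in the text via standard elliptic regularity, so that the pointwise evaluation of the PDE at $x_n$ is meaningful) together with the elementary fact that the Hessian of a $C^2$ function is negative semidefinite at an interior local maximum, hence its trace $\Delta u_n(x_n)$ is nonpositive. Everything else is an algebraic manipulation using only $\rho_n \leq 1$, $\alpha > 0$, $|x_n| \geq r_0$, and $p > 2$.
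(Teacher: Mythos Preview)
Your proof is correct and follows exactly the same approach as the paper: evaluate the PDE at the local maximum where $\Delta u_n(x_n)\le 0$, then use $\rho_n\le 1$ and $|x_n|\ge r_0$ to conclude.
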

\begin{proof}
 By regularity, $u_n\in C^2(\Omega),$ if $x_n$ is a local maximum point of $u_n,$ then $\Delta u_n(x_n)\leq0,$ and we have 
 \[
 \Delta u_n(x_n)=\lambda_n u_n(x_n)-\rho_n |x|^{-\alpha}u_n^{p-1}(x_n)\leq 0,
 \]
 so, it holds\[
 u_n(x_n)\geq (\frac{\lambda_n|x_n|^{\alpha}}{\rho_n})^{\frac{1}{p-2}}\geq r_0^{\frac{\alpha}{p-2}}\lambda_n^{\frac{1}{p-2}}.
 \]
 
\end{proof}
As the constant $r_0$ plays no essential role in the blow-up analysis, we normalize it to be $r_0=1$ for simplicity in the following content.

\begin{lemma}\label{complication}
     Suppose that \(\lambda_{n}\to+\infty\). Let \(P_{n}\in\Omega\) be such that, for some \(R_{n}\to\infty,\)
\[|u_{n}(P_{n})|=\max_{B_{R_{n}\tilde{\varepsilon}_{n}}(P_{n})}|u_{n}(x)|~{}~{}\text{ where }~{}\tilde{\varepsilon}_{n}=|u_{n}(P_{n})|^{-\frac{p-2}{2}}\to 0.\]
Set \(\varepsilon_{n}=\lambda_{n}^{-\frac{1}{2}}\). Then
\[\left(\frac{\tilde{\varepsilon}_{n}}{\varepsilon_{n}}\right)^{2}\to\tilde{\lambda}\in(0,1].\]
and passing to a subsequence if necessary, we have

$(i)$\(P_{n}\to P\in\Omega\);

$(ii)$\(\frac{dist(P_{n},\partial\Omega)}{\tilde{\varepsilon}_{n}}\to+\infty\) as \(n\to+\infty,\) and the scaled sequence
\begin{equation}\label{scaledsequence}
v_{n}(x):=\varepsilon_{n}^{\frac{2}{p-2}}u_{n}(\varepsilon_{n}x+P_{n}) ~{}~{}\text{for }~{}x\in\Omega_n:=\frac{\Omega-P_{n}}{\varepsilon_{n}} 
\end{equation}
converges to \(W_0\in H^{1}(\mathbb{R}^{N})\) in \(C^{2}_{loc}(\mathbb{R}^{N}),\) where \(W_0\) is the unique positive solution of
\[\left\{\begin{array}[]{ll}-\Delta v+v=\Lambda_0v^{p-1}&\quad\text{ in }\mathbb{R}^{N}, \\ v(0)=\max\limits_{x\in\mathbb{R}^N}v,&\\ v(x)\to 0~{}~{}\text{as}~{}~{}|x|\to+\infty;&\end{array}\right.\]

 $(iii)$there exists \(\phi_{n}\in C^{\infty}_{0}(\Omega),\) with supp \(\phi_{n}\subset B_{R\varepsilon_{n}}(P_{n})\) for some \(R>0,\) such that 
 \[
 \int_{\Omega}|\nabla\phi_n|^2+\lambda_n\phi_n^2-\rho_n(p-1)|x|^{-\alpha}u_n^{p-2}\phi_n^2 dx<0.
 \]

$(iv)$for all \(R>0\) and \(q\geq 1,\)
\[
\lim\limits_{n\rightarrow+\infty} \lambda_n^{\frac{N}{2}-\frac{q}{p-2}} \int_{B_{R \varepsilon_n (P_n)}} u_n^{q}=\lim_{n \to +\infty} \int_{B_R(0)} v_n^{q} = \int_{B_R(0)}( W_0)^{q}.\]

\begin{proof} \(P_{n}\) is a positive local maximum point. 
By Lemma \ref{localmax}, we deduce that
\[
\frac{\lambda_{n}}{|u_{n}(P_{n})|^{p-2}}\to\tilde{\lambda}\in[0,1] \text{ as }n\to\infty.
\]
Next, we show that \(\tilde{\lambda}>0\).
Define the rescaled function
\[
\tilde{u}_{n}(x):=\tilde{\varepsilon}_{n}^{\frac{2}{p-2}}u_{n}(\tilde{\varepsilon}_{n}x +P_{n})\ \text{ for }\ x\in\tilde{\Omega}_{n}:=\frac{\Omega-P_{n}}{\tilde{\varepsilon}_{n}}.
\]
Clearly, \(\tilde{u}_{n}\) satisfies
\[
\left\{\begin{array}{lll}
-\Delta\tilde{u}_{n}+\lambda_{n}\tilde{\varepsilon}_{n}^{2}\tilde{u}_{n}=\rho_n|\tilde{\varepsilon}_{n}x +P_{n}|^{-\alpha}\tilde{u}_{n}^{p-1} & \text{ in }\tilde{\Omega}_n, \\
\tilde{u}_{n}(x)\leq\tilde{u}_{n}(0)=1 & \text{ in }\tilde{\Omega}_n\cap B_{R_n}(0), \\
\tilde{u}_{n}=0 & \text{ on }\partial\tilde{\Omega}_n,
\end{array}\right.
\]
Since $P_n\in \Omega$ is a point of local maximum of $u_n,$ we have 
\begin{equation*}
    0 \leq -\Delta \tilde{u}_n(0) = \rho_n|P_n|^{-\alpha} - \lambda_n \tilde{\varepsilon}_n^2  \implies \lambda_n \tilde{\varepsilon}_n^2  \leq \rho_n|P_n|^{-\alpha}\leq 1,
\end{equation*}
it follows that, up to a subsequence,
\begin{equation*}
    \rho_n|P_n|^{-\alpha}\rightarrow \Lambda_0 \quad \text{as} \quad n\rightarrow \infty,
\end{equation*}
for some $\Lambda_0\in[\tilde{\lambda},1].$
Let \(d_{n}=\text{dist}(P_{n},\partial\Omega)\). Then
\[
\frac{d_{n}}{\tilde{\varepsilon}_{n}}:=L\in[0,+\infty]\ \text{ and }\ \tilde{\Omega}_n\to \left\{\begin{array}{lll}
\mathbb{R}^{N} & \text{if}\ \ L=+\infty; \\
\mathbb{H} & \text{if}\ \ L<+\infty,
\end{array}\right.
\]
where \(\mathbb{H}\) denotes a half-space such that \(0\in\mathbb{H}\) and \(d(0,\partial\mathbb{H})=L\). By regularity arguments, up to a subsequence, \(\tilde{u}_{n}\to\tilde{u}\geq0\) in \(C^{2}_{loc}(\overline{D}),\) where \(\tilde{u}\) solves
\[
\left\{\begin{array}{lll}
-\Delta\tilde{u}+\tilde{\lambda}\tilde{u}=\Lambda_0\tilde{u}^{p-1} & \text{in }D, \\
\tilde{u}(x)\leq\tilde{u}(0)=1 & \text{ in }D, \\
\tilde{u}=0 & \text{ on }\partial D,
\end{array}\right.
\]
where \(D\) is either \(\mathbb{R}^{N}\) or \(\mathbb{H}\).

We claim that \(m(\tilde{u})\leq 2\). To see this, suppose for contradiction that there exists \(k>2\) such that there are \(k\) positive functions \(\phi_{1},\cdots,\phi_{k}\in H^{1}(D)\cap C_c(D),\) orthogonal in \(L^{2}(\Omega),\) satisfying
\[
\int_{D}|\nabla\phi_{i}|^{2}dx+\tilde{\lambda}\int_{D}\phi_{i}^{2}dx-\int_{D}(p-1)\Lambda_0\tilde{u}^{p-2}\phi_{i}^{2}dx<0
\]
for every \(i\in\{1,\cdots,k\}\). Because $\tilde{\Omega}_n\rightarrow D,$ for $n$ large enough, we have 
\[\bigcup_{i=1}^{k}supp (\phi_i)\subset\tilde{\Omega}_n,\quad \text{where supp$(\phi_i)$ is the support of $\phi_i$}.\]
Define the rescaled functions
\[
\phi_{i,n}(x):=\tilde{\varepsilon}_{n}^{-\frac{N-2}{2}}\phi_{i}\left(\frac{x-P_{n}}{\tilde{\varepsilon}_{n}}\right),
\]
so the support of $\phi_{i,n}$ belongs to $\Omega,$ thus $\phi_{i,n}\in H_0^1(\Omega)\cap C_c(\Omega),$ and we can verify that
\begin{align}
&\int_{\Omega}|\nabla\phi_{i,n}|^{2}dx+\lambda_n\int_{\Omega}\phi_{i,n}^{2}dx-\int_{\Omega}\rho_n(p-1)|x|^{-\alpha}|u_n^{p-2}\phi_{i,n}^{2}dx\\
&=\int_{\Omega_n}|\nabla\phi_{i}|^{2}dx+\lambda_n\tilde{\varepsilon}_n^2\int_{\Omega_n}\phi_{i}^{2}dx-\int_{\Omega_n}\rho_n(p-1)|\tilde{\varepsilon}_nx+P_n|^{-\alpha}|\tilde{u}_n^{p-2}\phi_{i}^{2}dx\\
&\rightarrow \int_{D}|\nabla\phi_{i}|^{2}dx+\tilde{\lambda}\int_{D}\phi_{i}^{2}dx-\int_{D}(p-1)\Lambda_0|\tilde{u}|^{p-2}\phi_{i}^{2}dx<0,
\end{align}
This implies that \(m(u_{n})\geq k>2\) for sufficiently large \(n,\) thereby yielding a contradiction. Thus, the claim is valid.
This allows us to conclude that the occurrence of \(\tilde{\lambda}=0\) is ruled out, regardless of whether \(D\) is a half-space or a whole space. Consequently, we can assert that \(\tilde{\lambda}\in(0,\infty)\).

In the sequel, we consider the sequence \((v_{n})\) defined by $($\ref{scaledsequence}$)$. Clearly, \(v_{n}\) satisfies
\[
\left\{\begin{array}{ll}
-\Delta v_{n}+v_{n}=\rho_n|\varepsilon_nx+P_n|^{-\alpha}v_n^{p-1} & \mbox{ in }\Omega_n, \\
v_{n}(x)\leq v_{n}(0)=(\frac{\varepsilon_n}{\tilde{\varepsilon}_n})^{\frac{2}{p-2}}\rightarrow\tilde{\lambda}^{-\frac{1}{p-2}} & \mbox{ in }\Omega_n\cap B_{R_n\frac{\tilde{\varepsilon}_n}{\varepsilon_n}}(0), \\
v_n=0 & \mbox{ on }\partial\Omega_n.
\end{array}\right.
\]
Up to a subsequence, we have \(v_{n}\to v\) in \(C^{2}_{\text{loc}}(D),\) where \(D\) is either \(\mathbb{R}^{N}\) or a half space \(\mathbb{H},\) and \(v\) solves
\[
\left\{\begin{array}{ll}
-\Delta v+v=\Lambda_0v^{p-1} & \mbox{ in }D, \\
v(x)\leq v(0)=\tilde{\lambda}^{-\frac{1}{p-2}} & \mbox{ in }D, \\
v=0 & \mbox{ on }\partial D.
\end{array}\right.
\]
And \(m(v)\leq 2\). 

By \cite[Theorem 2.5]{pierotti2017normalized}, we know \(D=\mathbb{R}^{N},\) then \(\limsup\limits_{n\to+\infty}\frac{\text{dist}(P_{n},\partial\Omega)}{\tilde{\varepsilon}_{n}}=+\infty.\) Using \(m(v)\leq 2,\) by standard regularity arguments, \(v\in C^{2}(\mathbb{R}^{N}),\) \(|v(x)|\rightarrow0\) as \(|x|\to+\infty\). Using the strong maximum principle we know that \(v>0,\) and it coincides with the unique radial ground-state solution \(W_0\) to \(-\Delta u+u=\Lambda_0u^{p-1}\) in \(\mathbb{R}^{N}\). 
All in all, we get that (ii) holds.

By \cite[Theorem 1.1]{EspositoPetralla2011}, there exists \( \phi \in C_0^\infty(\mathbb{R}^N) \) such that \(\text{supp } \phi \subset B_R(0),\) \( R > 0 ,\) and
\[
\int_{\mathbb{R}^N} |\nabla \phi|^2 +\tilde{\lambda} \phi^2 + (p-1)\Lambda_0W_0^{p-2} \phi^2 < 0.
\]
As before, the function \( \phi_n(x) := \tilde{\varepsilon}_n^{-\frac{N-2}{2}} \phi\left(\frac{x-P_n}{\tilde{\varepsilon}_n}\right) \) is what we are looking for in (iii). 

Moreover, in view of \( 2\frac{p}{p-2} - N > 0 \) and by Lemma \ref{localmax},
\[
\tilde{\varepsilon}_n^2 \leq \lambda_n^{-1} ,
\]
let \( n \to +\infty ,\) we have that
\begin{align}
\int_{B_R(0)} (W_0)^{q} = \lim_{n \to +\infty} \int_{B_R(0)} v_n^{q} &= \lim_{n \to +\infty} \varepsilon_n^{\frac{2q}{p-2}-N} \int_{B_{R \varepsilon_n (P_n)}} u_n^{q}\\
&= \lim\limits_{n\rightarrow+\infty} \lambda_n^{\frac{N}{2}-\frac{q}{p-2}} \int_{B_{R \varepsilon_n (P_n)}} u_n^{q},
\end{align}
and (iv) is also proved.
\end{proof}
\end{lemma}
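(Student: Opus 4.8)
The plan is to carry out a two-scale blow-up at the near-maximum point $P_n$: first at the intrinsic scale $\tilde\varepsilon_n=u_n(P_n)^{-(p-2)/2}$, then at the Lagrange-multiplier scale $\varepsilon_n=\lambda_n^{-1/2}$, transporting the Morse bound $m(u_n)\le 2$ through the rescaling in order to eliminate both the degenerate limit $\tilde\lambda=0$ and the half-space limit.

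First I would set $\tilde u_n(x):=\tilde\varepsilon_n^{2/(p-2)}u_n(\tilde\varepsilon_n x+P_n)$ on $\tilde\Omega_n:=(\Omega-P_n)/\tilde\varepsilon_n$, so that $-\Delta\tilde u_n+\lambda_n\tilde\varepsilon_n^2\,\tilde u_n=\rho_n|\tilde\varepsilon_n x+P_n|^{-\alpha}\tilde u_n^{p-1}$ in $\tilde\Omega_n$ with $\tilde u_n(0)=1=\max_{\tilde\Omega_n\cap B_{R_n}(0)}\tilde u_n$. Since $0$ is an interior local maximum, $-\Delta\tilde u_n(0)\ge 0$ forces $\lambda_n\tilde\varepsilon_n^2\le\rho_n|P_n|^{-\alpha}\le\rho_n\le 1$ (recall the normalization $r_0=1$, hence $|P_n|\ge 1$); as $\lambda_n\tilde\varepsilon_n^2=(\tilde\varepsilon_n/\varepsilon_n)^2$, this sequence is bounded, and along a subsequence $(\tilde\varepsilon_n/\varepsilon_n)^2\to\tilde\lambda\in[0,1]$ while $\rho_n|P_n|^{-\alpha}\to\Lambda_0\in[\tilde\lambda,1]$. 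Elliptic estimates together with the local uniform bound $\tilde u_n\le 1$ then give $\tilde u_n\to\tilde u\ge 0$ in $C^2_{loc}(\overline D)$, where $D=\mathbb R^N$ or a half-space $\mathbb H$ according to whether $\mathrm{dist}(P_n,\partial\Omega)/\tilde\varepsilon_n\to+\infty$ or stays bounded, and $\tilde u$ solves $-\Delta\tilde u+\tilde\lambda\tilde u=\Lambda_0\tilde u^{p-1}$ in $D$ with $\tilde u(0)=1=\max_D\tilde u$ and $\tilde u=0$ on $\partial D$.

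The core of the argument is to prove $\tilde\lambda>0$, and this is where I expect the main difficulty. I would first show $m(\tilde u)\le 2$: three mutually $L^2$-orthogonal, compactly supported directions on which the limiting quadratic form $\int_D(|\nabla\phi|^2+\tilde\lambda\phi^2-(p-1)\Lambda_0\tilde u^{p-2}\phi^2)$ is negative would, after the rescaling $\phi_{i,n}(x):=\tilde\varepsilon_n^{-(N-2)/2}\phi_i((x-P_n)/\tilde\varepsilon_n)$ and using $\tilde\Omega_n\to D$ together with the locally uniform convergence of the coefficient $|\tilde\varepsilon_n x+P_n|^{-\alpha}$ (legitimate precisely because $|P_n|$ is trapped in a compact set away from $0$), produce three negative directions for $u_n$ when $n$ is large, contradicting $m(u_n)\le 2$. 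Granted $m(\tilde u)\le 2$, the case $\tilde\lambda=0$ would exhibit a nontrivial (since $\tilde u(0)=1$) positive solution of the $L^2$-subcritical Lane--Emden equation $-\Delta\tilde u=\Lambda_0\tilde u^{p-1}$ on $\mathbb R^N$ or on a half-space with Dirichlet data, which is ruled out by the classical Liouville-type nonexistence results (Gidas--Spruck and its half-space analogue, equivalently the nonexistence of positive solutions of finite Morse index). Hence $\tilde\lambda\in(0,1]$, and then $\Lambda_0\ge\tilde\lambda>0$ pins $|P_n|$ to a bounded range, so along a subsequence $P_n\to P$, which is (i).

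With $\tilde\lambda>0$ the two scales are comparable ($\varepsilon_n/\tilde\varepsilon_n\to\tilde\lambda^{-1/2}$), so I would switch to $v_n(x):=\varepsilon_n^{2/(p-2)}u_n(\varepsilon_n x+P_n)$ on $\Omega_n:=(\Omega-P_n)/\varepsilon_n$, solving $-\Delta v_n+v_n=\rho_n|\varepsilon_n x+P_n|^{-\alpha}v_n^{p-1}$, and, by the same regularity and Morse-transport arguments, $v_n\to v$ in $C^2_{loc}$ where $v>0$ solves $-\Delta v+v=\Lambda_0 v^{p-1}$ on $D$ with $v(0)=\max v$ and $m(v)\le 2$. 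By \cite[Theorem 2.5]{pierotti2017normalized} such a solution cannot live on a half-space with Dirichlet data, hence $D=\mathbb R^N$, which yields $\mathrm{dist}(P_n,\partial\Omega)/\tilde\varepsilon_n\to+\infty$ (so $P\in\Omega$) and, by uniqueness of the radial ground state of $-\Delta u+u=\Lambda_0 u^{p-1}$, $v=W_0$; this is (ii). For (iii) I would take a compactly supported $\phi$ on which the second variation of the limiting energy at the ground state is negative, furnished by \cite[Theorem 1.1]{EspositoPetralla2011}, rescale it to $\phi_n$ supported in $B_{R\varepsilon_n}(P_n)$, and pass to the limit in $\int_\Omega(|\nabla\phi_n|^2+\lambda_n\phi_n^2-\rho_n(p-1)|x|^{-\alpha}u_n^{p-2}\phi_n^2)$ using the $C^2_{loc}$ convergence of $v_n$ and of the coefficient. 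Finally (iv) is a direct change of variables: $\int_{B_R(0)}v_n^q=\varepsilon_n^{2q/(p-2)-N}\int_{B_{R\varepsilon_n}(P_n)}u_n^q=\lambda_n^{N/2-q/(p-2)}\int_{B_{R\varepsilon_n}(P_n)}u_n^q$, and $v_n\to W_0$ in $L^q_{loc}(\mathbb R^N)$ identifies the common value $\int_{B_R(0)}W_0^q$; the inequality $2p/(p-2)-N>0$, equivalent to $p<2^*$, together with $\tilde\varepsilon_n^2\le\lambda_n^{-1}$ from Lemma~\ref{localmax}, makes the relevant powers of $\lambda_n$ and $\tilde\varepsilon_n$ behave as claimed.
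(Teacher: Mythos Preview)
Your proposal is correct and follows essentially the same approach as the paper: the two-scale blow-up at $\tilde\varepsilon_n$ and then $\varepsilon_n$, the transport of the Morse bound $m(u_n)\le 2$ to the limit profile via rescaled test functions, the Liouville/finite-Morse-index nonexistence to exclude $\tilde\lambda=0$, the citation of \cite[Theorem~2.5]{pierotti2017normalized} to rule out the half-space, and the use of \cite[Theorem~1.1]{EspositoPetralla2011} for (iii) all match the paper's proof. One small remark: your parenthetical that the convergence of the coefficient $|\tilde\varepsilon_n x+P_n|^{-\alpha}$ is ``legitimate precisely because $|P_n|$ is trapped in a compact set'' is stated prematurely (boundedness of $|P_n|$ is only known after $\tilde\lambda>0$ is established), but the convergence to $\Lambda_0$ holds regardless since $|P_n|\ge r_0$ keeps the coefficient bounded, so the argument is unaffected.
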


We now proceed to provide a comprehensive global blow-up analysis. The following result offers a detailed description of the asymptotic behavior of \(\{u_{n}\}\) as \(\lambda_{n} \to +\infty\).

\begin{lemma}\label{Pn12k}
There exists \(k \in \{1, 2\}\) and sequences of points \(\{P^{1}_{n}\}, \cdots, \{P^{k}_{n}\},\) such that
\begin{equation}\label{maxPni}
|u_{n}(P^{i}_{n})| = \max_{B_{R_n\lambda_{n}^{-1/2}}(P^{i}_{n}) \cap \Omega} |u_{n}| \text{ for some } R_{n} \to \infty \text{, for every } i,
\end{equation}

\begin{equation}\label{lambdaPni}
\lambda_{n}|P^{i}_{n} - P^{j}_{n}|^{2} \to +\infty, \;\; \forall i \neq j, \;\; \text{as}\ \  n \to \infty,
\end{equation}
and moreover,
\begin{equation} \label{limitoflambdanun} 
\lim_{R \to \infty} \left( \limsup_{n \to +\infty} \lambda_{n}^{-\frac{1}{p-2}} \max_{d_{n} \geq R\lambda_{n}^{-1/2}} |u_{n}(x)| \right) = 0,
\end{equation}
where \(d_{n}(x) = \min\{|x - P^{i}_{n}| : i = 1, \cdots, k\}\) is the distance function from \(\{P^{1}_{n}, \cdots, P^{k}_{n}\}\) for \(x \in \Omega\).

\begin{proof} Take \(P^{1}_{n} \in \Omega\) such that \(u_{n}(P^{1}_{n}) = \max\limits_{\Omega} |u_{n}(x)|\). If $($\ref{limitoflambdanun}$)$ is satisfied for \(P^{1}_{n},\) then we get \(k = 1\). It is evident that \(P^{1}_{n}\) satisfies $($\ref{maxPni}$)$.

Otherwise, if \(P^{1}_{n}\) does not satisfy $($\ref{limitoflambdanun}$),$ we suppose that there exists \(\delta > 0\) such that
\[
\lim_{R \to \infty} \left( \limsup_{n \to +\infty} \lambda_{n}^{-\frac{1}{p-2}} \max_{|x - P^{1}_{n}| \geq R\lambda_{n}^{-1/2}} |u_{n}(x)| \right) \geq 4\delta.
\]
For sufficiently large \(R,\) up to a subsequence, it holds
\begin{equation}\label{Pn1}
\lambda_{n}^{-\frac{1}{p-2}} \max_{|x - P^{1}_{n}| \geq R\lambda_{n}^{-1/2}} |u_{n}(x)| \geq 2\delta.
\end{equation}

Let \(P^{2}_{n} \in \Omega \setminus B_{R\lambda_{n}^{-1/2}}(P^{1}_{n})\) such that
\[
|u_{n}(P^{2}_{n})| = \max_{\Omega \setminus B_{R\lambda_{n}^{-1/2}}(P^{1}_{n})} |u_{n}|.
\]
Then, $($\ref{Pn1}$)$ yields that \(|u_{n}(P^{2}_{n})|\geq 2\delta\lambda_n^{\frac{1}{p-2}} \to +\infty\) as \(n \to +\infty\).
We claim that
\begin{equation}\label{Pn12}
\lambda_{n}|P^{1}_{n} - P^{2}_{n}|^{2} \to +\infty.
\end{equation}
If $($\ref{Pn12}$)$ is not true, then up to subsequence \(\lambda^{\frac{1}{2}}_{n}|P^{1}_{n} - P^{2}_{n}| \to R^{\prime} \geq R\). Define
\[
v_{n,1}(x) :=  \lambda_{n}^{-\frac{1}{p-2}} u_{n}(\lambda_{n}^{-1/2}x + P^{1}_{n}).
\]

As in Lemma \ref{complication}, we can deduce that \(v_{n,1} \to v\) in \(C^{2}_{loc}(D),\) where \(D = \mathbb{R}^{N}\). Then, up to subsequences,
\[
\lambda_{n}^{-\frac{1}{p-2}} u_{n}(P^{2}_{n}) = v_{n,1}({\lambda_n^{1/2}(P^{2}_{n} - P^{1}_{n})}) \to v(x'), \; |x^{\prime}| \geq R^{\prime} > R.
\]
Since \(v(x) \to 0\) as \(|x| \to \infty,\) taking \(R\) larger if necessary, it follows that
\[
|v(x)| \leq  \delta, \forall |x| \geq R.
\]
This contradicts to $($\ref{Pn1}$),$ which proves the claim $($\ref{Pn12}$)$.

In the following, we shall show that
\begin{equation}\label{Pn2}
|u_{n}(P^{2}_{n})| = \max_{\Omega \cap B_{R_{n,2}\lambda^{-1/2}}(P^{2}_{n})} |u_{n}(x)|, \text{ for some } R_{n,2} \to +\infty.
\end{equation}
Let \(\tilde{\varepsilon}_{n,2} = |u_{n}(P^{2}_{n})|^{-\frac{p-2}{2}}\). Clearly, by lemma \ref{localmax}, \(\tilde{\varepsilon}_{n,2} \to 0\). By $($\ref{Pn1}$)$ we get \(\tilde{u}_{n,2} \leq (2\delta)^{-\frac{p-2}{2}} \lambda_{n}^{-\frac{1}{2}}\). From $($\ref{Pn12}$),$ we can assert that
\[
\tilde{R}_{n,2} := \frac{|P^{1}_{n} - P^{2}_{n}|}{2\tilde{\varepsilon}_{n,2}} \geq \frac{(2\delta)^{\frac{p-2}{2}}}{2} \lambda_{n}^{\frac{1}{2}} |P^{1}_{n} - P^{2}_{n}| \to +\infty, \text{ as }\  n \to \infty.
\]
On the other hand, for any \(x \in B_{\tilde{R}_{n,2},\tilde{\varepsilon}_{n,2}}(P^{2}_{n})\) and \(R > 0,\) we have
\[
|x - P^{1}_{n}| \geq |P^{2}_{n} - P^{1}_{n}| - |x - P^{2}_{n}| \geq \frac{1}{2} |P^{2}_{n} - P^{1}_{n}| \geq R \lambda_{n}^{-\frac{1}{2}}
\]
for arbitrarily large \(n\). Consequently,
\[
\Omega \cap B_{\tilde{R}_{n,2},\tilde{\varepsilon}_{n,2}}(P^{2}_{n}) \subset \Omega \backslash B_{R\lambda_n^{-\frac{1}{2}}} \{P^{1}_{n}\},
\]
which implies that
\[
|u_{n}(P^{2}_{n})| = \max_{\Omega \cap B_{\tilde{R}_{n,2},\tilde{\varepsilon}_{n,2}}(P^{2}_{n})} |u_{n}|.
\]

We define
\[
\tilde{u}_{n,2}(x) := \tilde{\varepsilon}_{k_{n,2}}^{\frac{2}{p-2}} u_{n}(\tilde{\varepsilon}_{n,2}x + P^{2}_{n}), \quad x \in \tilde{\Omega}_{n,2} := \frac{\Omega - P^{2}_{n}}{\tilde{\varepsilon}_{n,2}}.
\]
Set \(d_{n,2} = \operatorname*{dist}(P^{2}_{n}, \partial\Omega)\). Then, passing to subsequences if necessary,
\[
\frac{d_{n,2}}{\tilde{\varepsilon}_{n,2}} \to L_{2} \in [0, +\infty] \ \text{ and } \ \tilde{\Omega}_{n,2} \to 
\begin{cases}
\mathbb{R}^{N} & \text{if } L_{2} = +\infty, \\
\mathbb{H} & \text{if } L_{2} < +\infty.
\end{cases}
\]
Then \(\tilde{u}_{n,2}\) satisfies the following equation
\[
\left\{
\begin{array}{ll}
-\Delta\tilde{u}_{n,2} + \lambda_{n}\tilde{\varepsilon}_{n,2}^{2}\tilde{u}_{n,2} =\rho_n|\tilde{\varepsilon}_{n,2}x + P^{2}_{n}|^{-\alpha} \tilde{u}_{n,2}^{p-1} & \text{in } \tilde{\Omega}_{n,2}, \\
\tilde{u}_{n,2}(x) \leq \tilde{u}_{n,2}(0) = 1 & \text{in } \tilde{\Omega}_{n,2}\cap B_{\tilde{R}_{n,2}}(0), \\
\tilde{u}_{n,2} = 0 & \text{on } \partial\tilde{\Omega}_{n,2}.
\end{array}
\right.
\]
Since \(P^{2}_{n}\) is a local maximum, by Lemma \ref{localmax} we get
\[
\frac{\lambda_{n}}{|u_{n}(P^{2}_{n})|^{p-2}} \to \tilde{\lambda}^{(2)} \in [0, 1], \text{ as } n \to \infty.
\]
Using the similar argument as in Lemma \ref{complication}, we deduce \(\tilde{\lambda}^{(2)} > 0,\) namely
\[
\lim_{n \to +\infty} \lambda_{n}^{\frac{1}{2}} \tilde{\varepsilon}_{n,2} > 0.
\]
Set
\[
v_{n,2} := \varepsilon_{n}^{\frac{2}{p-2}} u_{n}({\varepsilon}_{n}x + P_{n}^{2})\ \  \text{ for }\  x \in \Omega_{n,2} := \frac{\Omega - P_{n}^{2}}{{\varepsilon}_{n}},
\]
where \({\varepsilon}_{n} = \lambda_{n}^{-\frac{1}{2}}\) is defined in Lemma \ref{complication}. Since $P_n^2\in \Omega$ is a point of local maximum of $u_n,$ we have 
\begin{equation*}
    0 \leq -\Delta v_{n,2}(0) = \rho_n|P_n^2|^{-\alpha} - \lambda_n \varepsilon_n^2  \implies \lambda_n \varepsilon_n^2  \leq \rho_n|P_n^2|^{-\alpha}\leq 1,
\end{equation*}
it follows that, up to a subsequence,
\begin{equation*}
    \rho_n|P_n^2|^{-\alpha}\rightarrow \Lambda_2 \quad \text{as} \quad n\rightarrow \infty,
\end{equation*}
for some $\Lambda_2\in[\tilde{\lambda}^{(2)},1].$Up to a subsequence, there exists a function \(v^{(2)} \in H^{1}(D)\) such that \(v_{n,2} \to v^{(2)}\) in \(C^{2}_{loc}(D),\) where \(D\) is \(\mathbb{R}^{N}\). Moreover, \(v^{(2)}\) solves the following problem
\[
\left\{
\begin{array}{ll}
-\Delta v^{(2)} +  v^{(2)} = \Lambda_2(v^{(2)})^{p-1} & \text{in } D, \\
v^{(2)}(x) \leq v^{(2)}(0) = (\tilde{\lambda}^{(2)})^{-\frac{1}{p-2}} & \text{in } D, \\
v = 0 & \text{on } \partial D.
\end{array}
\right.
\]
Then, by a similar discussion as Lemma \ref{complication}, we conclude that \(\frac{d_{n,2}}{\varepsilon_n} \to +\infty\). Define \(R_{n,2} = \tilde{R}_{n,2} \lambda^{\frac{1}{2}}_{n} \tilde{\varepsilon}_{n,2}\). Clearly, \(R_{n,2} \to +\infty\) as \(n \to +\infty\). Hence, $($\ref{Pn2}$)$ holds.

If $($\ref{limitoflambdanun}$)$ does not hold, we can apply similar arguments as before to show that there exists \(P_{n}^{3}\) such that $($\ref{maxPni}$)$-$($\ref{lambdaPni}$)$ are satisfied. For \(P^{i}_{n}, i = 1, 2, 3,\) applying Lemma \ref{complication} again, we can find \(\phi^{i}_{n} \in C_{0}^{\infty}(\Omega)\) with supp\(\phi^{i}_{n} \in B_{R\varepsilon_{n}}(P_{n}^{i}) \cap \Omega\) for some \(R > 0,\) such that
\[
\int_{\Omega} [\nabla\phi^{i}_{n}]^{2} \, dx + \int_{\Omega} [(\lambda_n - (p - 1)\rho_{n}|x|^{-\alpha}u_{n}^{p-2})(\phi^{i}_{n})^{2}] \, dx < 0.
\]

In light of $($\ref{lambdaPni}$),$ we observe that \(\phi^{1}_{n}, \phi^{2}_{n}\) \(\phi^{i}_{n}\) are mutually orthogonal for sufficiently large \(n,\) which implies \(\lim\limits_{n \to +\infty} m(u_{n}) \geq 3\). This leads to a contradiction with the fact that \(m(u_{n}) \leq 2\). 
 \end{proof}
In the subsequent analysis, we show that \(u_{n}\) exhibits exponential decay away from the blow-up points.
   
\end{lemma}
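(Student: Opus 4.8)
The plan is to build the concentration points one at a time by a greedy maximum-selection procedure and to use the uniform Morse index bound $m(u_n)\le 2$ to cap their number at $k\le 2$. First I would take $P^1_n\in\Omega$ to be a global maximum point of $u_n$, so that \eqref{maxPni} holds automatically for $i=1$ with any $R_n\to\infty$. If the decay statement \eqref{limitoflambdanun} already holds relative to the single family $\{P^1_n\}$, then $k=1$ and the lemma is proved. Otherwise there is $\delta>0$ so that, along a subsequence and for all large $R$, $\lambda_n^{-1/(p-2)}\max_{|x-P^1_n|\ge R\lambda_n^{-1/2}}|u_n(x)|\ge 2\delta$. In that case I would define $P^2_n$ as a maximizer of $|u_n|$ over $\Omega\setminus B_{R\lambda_n^{-1/2}}(P^1_n)$, so that $|u_n(P^2_n)|\ge 2\delta\,\lambda_n^{1/(p-2)}\to+\infty$, and then proceed to verify that $P^2_n$ is a genuine, well-separated concentration point.

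The first key step is the separation estimate \eqref{lambdaPni} for the pair $P^1_n,P^2_n$, which I would prove by contradiction. If $\lambda_n^{1/2}|P^1_n-P^2_n|\to R'\in[R,\infty)$ along a subsequence, then rescaling at the common scale $\varepsilon_n=\lambda_n^{-1/2}$ and invoking the blow-up convergence $v_n\to W_0$ from Lemma \ref{complication} gives $\lambda_n^{-1/(p-2)}u_n(P^2_n)=v_n\big(\lambda_n^{1/2}(P^2_n-P^1_n)\big)\to W_0(x')$ for some $|x'|\ge R'\ge R$; since $W_0(x)\to 0$ as $|x|\to\infty$, enlarging $R$ makes this limit smaller than $\delta$, contradicting $|u_n(P^2_n)|\ge 2\delta\,\lambda_n^{1/(p-2)}$. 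Once \eqref{lambdaPni} is known, the local-maximality \eqref{maxPni} for $P^2_n$ follows by a scale comparison: writing $\tilde\varepsilon_{n,2}=|u_n(P^2_n)|^{-(p-2)/2}\to0$, the separation forces $\tilde R_{n,2}:=|P^1_n-P^2_n|/(2\tilde\varepsilon_{n,2})\to\infty$, and on $B_{\tilde R_{n,2}\tilde\varepsilon_{n,2}}(P^2_n)$ every point lies at distance $\ge\tfrac12|P^1_n-P^2_n|\ge R\lambda_n^{-1/2}$ from $P^1_n$, so the maximum of $|u_n|$ over this ball is attained at $P^2_n$; setting $R_{n,2}=\tilde R_{n,2}\,\lambda_n^{1/2}\tilde\varepsilon_{n,2}\to\infty$ yields \eqref{maxPni}. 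Running the blow-up of Lemma \ref{complication} around $P^2_n$ (excluding the half-space case and $\tilde\lambda^{(2)}=0$ exactly as there) then produces a nondegenerate limit bubble, confirming $P^2_n$ is a true concentration point.

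The decisive step is the upper bound $k\le 2$, where the Morse index enters. Suppose \eqref{limitoflambdanun} still failed after selecting $P^1_n,P^2_n$; repeating the selection would produce a third family $P^3_n$ satisfying \eqref{maxPni}--\eqref{lambdaPni}. By Lemma \ref{complication}$(iii)$, around each $P^i_n$ there is $\phi^i_n\in C_0^\infty(\Omega)$ with $\operatorname{supp}\phi^i_n\subset B_{R\varepsilon_n}(P^i_n)$ on which the constrained second variation is strictly negative, i.e. $\int_\Omega\big(|\nabla\phi^i_n|^2+\lambda_n(\phi^i_n)^2-\rho_n(p-1)|x|^{-\alpha}u_n^{p-2}(\phi^i_n)^2\big)\,dx<0$. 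Because each support has radius of order $\varepsilon_n=\lambda_n^{-1/2}$ while \eqref{lambdaPni} gives $|P^i_n-P^j_n|\gg\lambda_n^{-1/2}$, the three supports are pairwise disjoint for large $n$; hence $\phi^1_n,\phi^2_n,\phi^3_n$ are mutually orthogonal in $L^2$ and in $H^1_0(\Omega)$ and span a three-dimensional subspace on which the quadratic form $D^2E_{\rho_n}(u_n)$ is negative definite, forcing $m(u_n)\ge 3$. This contradicts $m(u_n)\le 2$, so the procedure terminates at $k\le 2$, and for this terminal $k$ the decay condition \eqref{limitoflambdanun} holds by the stopping rule.

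The main obstacle I anticipate is the bookkeeping between two distinct scales in the second and third steps: the individual bubble scale $\tilde\varepsilon_{n,i}=|u_n(P^i_n)|^{-(p-2)/2}$ used to identify each limit profile, and the common scale $\varepsilon_n=\lambda_n^{-1/2}$ in which the separation \eqref{lambdaPni} and the test-function supports are measured. Both the separation argument and the disjointness of the supports require these scales to be comparable, namely $\lambda_n^{1/2}\tilde\varepsilon_{n,i}\to(\tilde\lambda^{(i)})^{1/2}\in(0,1]$; this is exactly what the Morse-index reasoning of Lemma \ref{complication} secures by excluding $\tilde\lambda^{(i)}=0$. Coordinating these two scales so that a single separation at rate $\lambda_n^{-1/2}$ simultaneously controls the local-maximum radii $R_{n,i}\lambda_n^{-1/2}$ and the supports $B_{R\varepsilon_n}(P^i_n)$ is the technical heart of the proof.
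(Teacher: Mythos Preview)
Your proposal is correct and follows essentially the same route as the paper's proof: greedy selection of maxima, separation \eqref{lambdaPni} by contradiction against the decaying limit profile from Lemma~\ref{complication}, local maximality \eqref{maxPni} for $P^2_n$ via the scale comparison $\tilde R_{n,2}\tilde\varepsilon_{n,2}$, and termination at $k\le 2$ by producing three disjointly supported negative test functions from Lemma~\ref{complication}$(iii)$ in violation of $m(u_n)\le 2$. Your closing remark about coordinating the two scales $\tilde\varepsilon_{n,i}$ and $\varepsilon_n=\lambda_n^{-1/2}$ via the strict positivity $\tilde\lambda^{(i)}>0$ is exactly the technical point the paper handles by invoking the argument of Lemma~\ref{complication} at each new bubble.
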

\begin{lemma}\label{estimate} Let \(\{P_{n}^{1}, \cdots, P_{n}^{k}\}\) be given in Lemma \ref{Pn12k}, Then there are \(\gamma,C>0\) so that
\begin{equation}\label{estimate'}
u_{n}(x)\leq C\,\lambda_{n}^{\frac{1}{p-2}}\sum_{i=1}^{k}e^{-\gamma\lambda_{n}^{\frac{1}{2}}|x-P_N^i|}\quad\forall x\in\Omega,\quad n\in\mathbb{N}.
\end{equation}
\begin{proof}
By $($\ref{limitoflambdanun}$)$ for \(R>0\) large and \(n\geq n(R)\) there holds
\[
\lambda_{n}^{-\frac{1}{p-2}}\max_{\left\{d_{n}(x)\geq R\lambda_{n}^{- \frac{1}{2}}\right\}}u_{n}(x)\leq\left(\frac{1}{2}\right)^{\frac {1}{p-2}},
\]
hence, in \(\{d_{n}(x)\geq R\,\lambda_{n}^{-\frac{1}{2}}\}\) for \(n\geq n(R)\) we have
\begin{equation}\label{5.15}
\tilde{a}_{n}(x):=\lambda_{n}-\rho_n|x|^{-\alpha}u_{n}^{p-2}(x)\geq\frac{\lambda_{n }}{4}. 
\end{equation}

Compute the linear operator \(-\Delta+\tilde{a}_{n}(x)\) on \(\xi_{n}^{i}(x)=e^{-\gamma\lambda_{n}^{\frac{1}{2}}|x-P_{n}^{i}|}\) in \(\{d_{n}(x)\geq R\,\lambda_{n}^{-\frac{1}{2}}\}\):
\[
(-\Delta+\tilde{a}_{n})(\xi_{n}^{i})=\lambda_{n}\xi_{n}^{i}\bigg{[}-\gamma^{2}+( N-1)\frac{\gamma}{\lambda_{n}^{\frac{1}{2}}|x-P_{n}^{i}|}+\lambda_{n}^{-1} \tilde{a}_{n}(x)\bigg{]}\geq 0
\]
for \(n\) large, provided $\gamma$ small enough. Observe that for \(R\) large
\begin{equation}\label{xilimit}
\bigg{(}e^{\gamma R}\xi_{n}^{i}(x)-\lambda_{n}^{-\frac{1}{p-2}}u_{n}(x)\bigg{)} \Big{|}_{\partial B_{R\,\lambda_{n}^{-\frac{1}{2}}}(P_{n}^{i})}\to 1-W_0(R)>0
\end{equation}
as \(n\to+\infty,\) where $W_0(|x|)=W_0(x)$ for any $x\in\mathbb{R}^N$.

Then, if we define \(\xi_{n}:=e^{\gamma\,R\,}\lambda_{n}^{\frac{1}{p-2}}\sum_{i=1}^{k}\xi_{n}^{i},\) for \(L_{n}=-\Delta+\lambda_{n}-\rho_n|x|^{-\alpha}u_{n}^{p-2}\) we have
\[
L_{n}(\xi_{n}-u_{n})=L_n\xi_n=e^{\gamma\,R\,}\lambda_{n}^{\frac{1}{p-2}}\sum_{i=1}^{k}(-\Delta+\tilde{a}_{n})\xi_{n}^{i}\geq 0\quad\text{in $\{d_{n}(x)>R\,\lambda_{n}^{-\frac{1}{2 }}\}$}
\]
and by $($\ref{xilimit}$),$ \(\xi_{n}-u_{n}\geq 0\) on \(\{d_{n}(x)=R\,\lambda_{n}^{-\frac{1}{2}}\}\cup\partial\Omega\). Note that by Lemma \ref{Pn12k}
\[
\left\{d_{n}(x)=R\,\lambda_{n}^{-\frac{1}{2}}\right\}=\bigcup_{i=1}^{k}\partial B_{ R\,\lambda_{n}^{-\frac{1}{2}}}(P^{i}_{n})\subset\Omega
\]
for \(n\geq n(R)\). Then, by $($\ref{5.15}$)$ and the maximum principle of E.Hopf \cite[Theorem 6 in Section 3]{Maximum}
\[
u_{n}\leq\xi_{n}=e^{\gamma\,R}\,\lambda_{n}^{\frac{1}{p-2}}\sum_{i=1}^{k}e^{-\gamma \lambda_{n}^{\frac{1}{2}}\,|x-P^{i}_{n}|}
\]
in \(\{d_{n}(x)\geq R\,\lambda_{n}^{-\frac{1}{2}}\},\) if \(R\) is large and \(n\geq n(R)\). Since by the definition in Lemma \ref{complication} and Lemma \ref{Pn12k},
\[
u_{n}(x)\leq\max_{\Omega}u_{n}=(\tilde{\varepsilon}_{n})^{-\frac{2}{p-2}}\,\leq\,Ce^{ \gamma\,R}\,\lambda_{n}^{\frac{1}{p-2}}\sum_{i=1}^{k}e^{-\gamma\lambda_{n}^{\frac{1}{2}}\,| x-P^{i}_{n}|}
\]
for some \(C>0,\) if \(d_{n}(x)\leq R\,\lambda_{n}^{-\frac{1}{2}},\) we have that $($\ref{estimate'}$)$ holds true in \(\Omega\) with a constant \(Ce^{r\,R}\) and \(n\geq n(R)\). Up to take a larger constant \(C,\) we have the validity of $($\ref{estimate'}$)$ in \(\Omega\) for every \(n\in\mathbb{N}\).
\end{proof}
\end{lemma}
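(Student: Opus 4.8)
\emph{Plan.} I would run a comparison/barrier argument for the linear operator
\(L_n:=-\Delta+\lambda_n-\rho_n|x|^{-\alpha}u_n^{p-2}\), which satisfies \(L_nu_n=0\) in \(\Omega\) by \eqref{rhoto1}. Split \(\Omega\) into the small balls \(B_{R\lambda_n^{-1/2}}(P_n^i)\) around the blow-up points and the exterior region \(D_n(R):=\{x\in\Omega:\ d_n(x)>R\lambda_n^{-1/2}\}\), where \(d_n(x)=\min_i|x-P_n^i|\). First fix \(R\) large, using \eqref{limitoflambdanun}, so that \(\lambda_n^{-1/(p-2)}u_n(x)\le\tfrac12\) whenever \(d_n(x)\ge R\lambda_n^{-1/2}\) and \(n\ge n(R)\); since \(|x|\ge r_0=1\) gives \(|x|^{-\alpha}\le1\) and \(\rho_n\le1\), this forces
\(\tilde a_n(x):=\lambda_n-\rho_n|x|^{-\alpha}u_n^{p-2}(x)\ge\lambda_n/4\) on \(\overline{D_n(R)}\).

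Next build the barrier. For each \(i\) set \(\xi_n^i(x):=e^{-\gamma\lambda_n^{1/2}|x-P_n^i|}\) with \(\gamma\in(0,1/2)\) to be chosen. A direct computation in polar coordinates about \(P_n^i\) gives
\[
(-\Delta+\tilde a_n)\xi_n^i=\lambda_n\,\xi_n^i\Big[-\gamma^2+\frac{(N-1)\gamma}{\lambda_n^{1/2}|x-P_n^i|}+\lambda_n^{-1}\tilde a_n(x)\Big].
\]
On \(D_n(R)\) the middle term is \(\ge0\) and \(\lambda_n^{-1}\tilde a_n\ge1/4\), so with \(\gamma<1/2\) the bracket is positive and \(L_n\xi_n^i\ge0\) there. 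By linearity, \(\xi_n:=e^{\gamma R}\lambda_n^{1/(p-2)}\sum_{i=1}^k\xi_n^i\) obeys \(L_n\xi_n\ge0\) on \(D_n(R)\), hence \(L_n(\xi_n-u_n)\ge0\). The boundary is \(\partial D_n(R)=\partial\Omega\cup\bigcup_i\partial B_{R\lambda_n^{-1/2}}(P_n^i)\), the spheres being disjoint and contained in \(\Omega\) for large \(n\) by \eqref{lambdaPni} and the construction in Lemma \ref{Pn12k}. On \(\partial\Omega\) one has \(u_n=0\le\xi_n\); on \(\partial B_{R\lambda_n^{-1/2}}(P_n^i)\) the \(C^2_{loc}\) blow-up convergence of Lemma \ref{complication}/Lemma \ref{Pn12k} shows that \(\lambda_n^{-1/(p-2)}u_n\) is, uniformly, as close as we wish to the value at radius \(R\) of the limiting ground-state profile, which is \(<1\) for \(R\) large, while \(\xi_n\ge e^{\gamma R}\lambda_n^{1/(p-2)}e^{-\gamma R}=\lambda_n^{1/(p-2)}\) there (if one is uneasy, just enlarge the prefactor beyond \(e^{\gamma R}\)); thus \(\xi_n-u_n\ge0\) on \(\partial D_n(R)\). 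Since \(\tilde a_n\ge\lambda_n/4>0\) and both \(u_n\in H_0^1(\Omega)\cap C^2\) and \(\xi_n\) vanish at infinity, a standard maximum principle (Hopf's) yields \(u_n\le\xi_n\) on \(D_n(R)\): a negative infimum of \(\xi_n-u_n\) would be attained at an interior point, where \(-\Delta(\xi_n-u_n)\le0\) and \(\tilde a_n(\xi_n-u_n)<0\) contradict \(L_n(\xi_n-u_n)\ge0\).

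It remains to treat \(x\) with \(d_n(x)\le R\lambda_n^{-1/2}\), say \(|x-P_n^{i_0}|\le R\lambda_n^{-1/2}\). There \(u_n(x)\le\max_\Omega u_n=u_n(P_n^1)=\tilde\varepsilon_n^{-2/(p-2)}\le C\lambda_n^{1/(p-2)}\), using \((\tilde\varepsilon_n/\varepsilon_n)^2\to\tilde\lambda\in(0,1]\) with \(\varepsilon_n=\lambda_n^{-1/2}\) from Lemma \ref{complication}; combined with \(e^{-\gamma\lambda_n^{1/2}|x-P_n^{i_0}|}\ge e^{-\gamma R}\) this gives \(u_n(x)\le Ce^{\gamma R}\lambda_n^{1/(p-2)}\sum_i e^{-\gamma\lambda_n^{1/2}|x-P_n^i|}\). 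Combining the two regions, \eqref{estimate'} holds with constant \(Ce^{\gamma R}\) for all \(x\in\Omega\) and \(n\ge n(R)\); the finitely many remaining \(n\) are absorbed by enlarging \(C\) (and, if needed, shrinking \(\gamma\le1\)), as each such \(u_n\) is a fixed positive \(C^2\) function decaying like \(e^{-\sqrt{\lambda_n}|x|}\).

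\emph{Main obstacle.} The crux is the joint choice of \(R\) and \(\gamma\): \(R\) must be large enough both to make \(\tilde a_n\ge\lambda_n/4\) on the exterior region (via \eqref{limitoflambdanun}) and to make the limit profile's value at radius \(R\) smaller than \(1\) on the comparison spheres (via the \(C^2_{loc}\) convergence of Lemma \ref{complication}), while \(\gamma<1/2\) keeps each \(\xi_n^i\) a supersolution; marshalling these two earlier facts in the right order is the real content of the proof. A secondary technical point is the legitimacy of the comparison principle on the unbounded set \(D_n(R)\), which rests on the a priori knowledge that \(u_n\) (hence also \(\xi_n\)) tends to \(0\) at infinity.
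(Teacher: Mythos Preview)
Your argument is correct and follows the paper's proof essentially verbatim: use \eqref{limitoflambdanun} to make the potential $\tilde a_n\ge c\lambda_n$ outside the blow-up balls, check that the scaled exponentials $\xi_n^i$ are $L_n$-supersolutions there for $\gamma$ small, compare with $u_n$ on the inner spheres via the $C^2_{loc}$ convergence of Lemma~\ref{complication} and on $\partial\Omega$ trivially, apply Hopf's maximum principle, and then cover the inner balls and the finitely many small $n$ by the crude bound $u_n\le\max_\Omega u_n\lesssim\lambda_n^{1/(p-2)}$. The only cosmetic slip is that your threshold $\lambda_n^{-1/(p-2)}u_n\le\tfrac12$ gives $\tilde a_n\ge(1-2^{-(p-2)})\lambda_n$, which need not be $\ge\lambda_n/4$ for every admissible $(N,p)$; the paper instead takes the threshold $(1/2)^{1/(p-2)}$, but since \eqref{limitoflambdanun} allows an arbitrarily small threshold this is immaterial.
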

Building on the blow-up analysis above, we shall prove the following results.
\begin{proof}[Proof of Theorem \ref{submass}]
By Proposition \ref{prop3.5}, for any $\rho\in[\frac{1}{2},1],$ there is a sequence $\{\rho_n\}\subset[\frac{1}{2},1]$ such that $\rho_n\to \rho$ and $u_{\rho_n} $ is the positive $(\mu,\rho_n)$ sub-mass mountain pass solution for $\lambda_{\rho_n}\geq0$. We will prove the case where $\rho=1,$ writing for simplicity $u_n:=u_{\rho_n}$ for $\rho_n\to1^-$; the proofs for other cases are similar and left to interested readers.
    Firstly, we show that \(\{\lambda_{n}\}\) is bounded. Suppose, by contradiction, that \(\lambda_{n}\rightarrow+\infty\). By Lemma \ref{Pn12k}, there exist at most \(k\) blow-up limits \(\{P_{n}^{1}\},\cdots,\{P_{n}^{k}\}\) with \(k\leq 2\). In the following, we denote by \(\{v_{n}^{l}:=  \lambda_{n}^{-\frac{1}{p-2}} u_{n}(\lambda_{n}^{-1/2}x + P^{l}_{n})\}\) the scaled sequence around \(\{P_{n}^{l}\},\) and by Lemma \ref{complication} we know\(W_0\) is the limit of \(v_{n}^{l},\) for all $l=1,\cdots,k$. Note that for these blow-up points \(\{P_{n}^{1}\},\cdots,\{P_{n}^{k}\},\) by Lemma \ref{complication}  it hold
\[
\lambda_{n}^{\frac{1}{2}}\text{dist}(P_{n}^{l},\partial\Omega)\rightarrow+\infty,\quad \forall l=1,\cdots,k.
\]
On the one hand, we can deduce that for any \(R>0,\)
\begin{equation}\label{tag5.1}
\left|\lambda_{n}^{\frac{N}{2}-\frac{2}{p-2}}\int_{\Omega}u_{n}^{2}dx-\sum_{l=1}^{k}\int_{B_{R}(0)}|v_{n}^{l}|^{2}dx\right|\rightarrow+\infty. 
\end{equation}
In fact, from the proof of Proposition \ref{prop3.5}, the $(\mu,\rho_n)$ sub-mass mountain pass sequence $\{u_n\}$ is a subset of $S_\mu$ when $\lambda_n\to\infty,$ then since \(p\in(2+4/N,2^{*}),\) the first term satisfies
\[
\left|\lambda_{n}^{\frac{N}{2}-\frac{2}{p-2}}\int_{\Omega}u_{n}^{2}dx\right|=\lambda_{n}^{\frac{N}{2}-\frac{2}{p-2}}\mu\rightarrow+\infty.
\]
By Lemma Lemma \ref{complication}, we have
\[
\sum_{i=1}^{k}\int_{B_{R}(0)}|v_{n}^{i}|^{2}dx\rightarrow\sum_{i=1}^{k}\int_{B_{R}(0)}|W_0|^{2}dx,
\]
which imply that $($\ref{tag5.1}$)$ holds.

On the other hand, by Lemma \ref{estimate}, there exist constants \(C,C^{\prime}>0\) such that
\begin{align*}
&\left|\lambda_{n}^{\frac{N}{2}-\frac{2}{p-2}}\int_{\Omega}u_{n}^{2}dx-\sum_{l=1}^{k}\int_{B_{R}(0)}|v_{n}^{l}|^{2}dx\right| \\
&=\lambda_{n}^{\frac{N}{2}-\frac{2}{p-2}}\int_{\Omega\setminus\cup_{l=1}^{k}(B_{R \lambda_{n}^{-1/2}}(\mathbb{P}_{N}^{l}))}u_{n}^{2}dx \\
&\leq C \lambda_{n}^{\frac{N}{2}}\sum_{l=1}^{k}\int_{\mathbb{R}^{N}\setminus\cup_{l=1}^{k}(B_{R \lambda_{n}^{-1/2}}(\mathbb{P}_{N}^{l}))}e^{-2\gamma\lambda_{n}^{\frac{1}{2}}|x-p_n^l|}dx \\
&\leq C \sum_{i=1}^{k}\int_{\mathbb{R}^{N}\setminus B_{R}(0)}e^{-2\gamma|y|}dy\leq C^{\prime}=C^{\prime}(R).
\end{align*}

Taking \(n\to+\infty,\) we obtain a contradiction to $($\ref{tag5.1}$)$. Hence, \(\{\lambda_{n}\}\) is bounded, so $\{u_n\}$ is bounded too, and up to a subsequence, there is a $u(1)\in H_0^1(\Omega)$ such that $u_n\rightharpoonup u(1)$ as $n\to \infty.$ Similar to the proof process of Proposition \ref{prop3.5}, because $c_\rho(\mu)$ is continuous with respect to $\rho,$ $u(1)$ is a positive $(\mu,1)$ sub-mass mountain pass solution. 

\end{proof}

\begin{proof}[Proof of Theorem \ref{asymtotic}]
   Let $(u(\mu,\rho),\lambda(\mu,\rho))$ be given by Theorem \ref{submass}. By Theorem \ref{c_rho} and the definition of $c_\rho(\mu)$ we know that 
\[c_\rho(\mu)\geq \frac{k_0}{4} =  \frac{1}{4}\left(\frac{ p}{4C_{p,N}}\right)^{\frac{2}{p\gamma_{p}-2}} \mu^{-\frac{(1-\gamma_{p})p}{p\gamma_{p}-2}}\to \infty\quad \text{as}\quad \mu\to 0,\]
because $-\frac{(1-\gamma_{p})p}{p\gamma_{p}-2}<0.$ At the other hand, $(u(\mu,\rho),\lambda(\mu,\rho))$ solves equation $($\ref{1.1rho}$)$ and satisfies that $c_\rho(u(\mu,\rho))=E_\rho(u(\mu,\rho)),$ so 
\begin{align*}c_\rho(u(\mu,\rho))&=\frac{p-2}{2p}\int_{\Omega}|x|^{-\alpha}|u(\mu,\rho)|^{p}-\frac{\lambda(\mu,\rho)}{2}\int_{\Omega}|u(\mu,\rho)|^2\\
&\leq \frac{p-2}{2p}\max_{\Omega}|u(\mu,\rho)(x)|^{p-2}r_0^{-\alpha}\mu,\end{align*}
 and then \begin{align}\label{blowupofmu} \tilde{\varepsilon}_\mu:=\max_{\Omega}|u(\mu,\rho)(x)|\to\infty,\quad \text{as}\ \mu\to 0.\end{align}
 To prove the final conclusion, we will divide the proof process into two steps.

 Step 1: We claim that there is a $\mu_0$ small enough, such that $\lambda(\mu,\rho)>0$ for all $0<\mu<\mu_0.$ 
 
 If not, we can find a subsequence $\{(\lambda_k,u_k):=(u(\mu_k,\rho),\lambda(\mu_k,\rho))\}$ with $\lambda_k\equiv 0$ and $\mu_k\to0$ as $k\to\infty$. Define the rescaled function
\[
\tilde{u}_k(x) := \tilde{\varepsilon}_{k}^{\frac{2}{p-2}} u_k(\tilde{\varepsilon}_{k} x + P_{k}) \quad\text{ for }\  x \in \tilde{\Omega}_{k} := \frac{\Omega - P_{k}}{\tilde{\varepsilon}_{k}},
\]
where $P_k$ is the maximum point of $u_k$ and $\tilde{\varepsilon}_k=\tilde{\varepsilon}_{\mu_k}.$
Clearly, $\tilde{u}_k$ satisfies
\[
\begin{cases}
-\Delta \tilde{u}_k= |\tilde{\varepsilon}_{k} x + P_{k}|^{-\alpha} \tilde{u}_k^{p-1} & \text{in } \tilde{\Omega}_{k}, \\
\tilde{u}_k(x) \leq \tilde{u}_k(0) = 1 & \text{in } \tilde{\Omega}_{k}, \\
\tilde{u}_k = 0 & \text{on } \partial \tilde{\Omega}_{k},
\end{cases}
\]
Since $P_{k} \in \Omega$ is the maximum point $u_k,$ we have
\[
0 \leq -\Delta \tilde{u}_k(0) =  |P_{k}|^{-\alpha}  \implies 0 \leq  |P_{k}|^{-\alpha} \leq r_0^{-\alpha},
\]
it follows that, up to a subsequence,
\[
 |P_{k}|^{-\alpha} \to \Lambda_{0} \quad \text{as} \quad k \to 0,
\]
for some $\Lambda_{0} \in [0, 1]$. Let $d_{k} = \operatorname{dist}(P_{k}, \partial \Omega)$. Then
\[
\frac{d_{k}}{\tilde{\varepsilon}_{k}} := L \in [0, +\infty] \text{ and } \tilde{\Omega}_{k} \to D:= \begin{cases}
\mathbb{R}^{N} & \text{if } L = +\infty; \\
\mathbb{H} & \text{if } L < +\infty,
\end{cases}
\]
where $\mathbb{H}$ denotes a half-space such that $0 \in \mathbb{H}$ and $d(0, \partial \mathbb{H}) = L$. By regularity arguments, up to a subsequence, $\tilde{u}_k \to \tilde{u} \geq 0$ in $C^{2}_{loc}(\overline{D}),$ and $\tilde{u}$ solves
\[
\begin{cases}
-\Delta \tilde{u} = \Lambda_{0} \tilde{u}^{p-1} & \text{in } D, \\
\tilde{u}(x) \leq \tilde{u}(0) = 1 & \text{in } D, \\
\tilde{u} = 0 & \text{on } \partial D,
\end{cases}
\]
where $D$ is either $\mathbb{R}^{N}$ or $\mathbb{H}$.
By the Liouville theorem, we deduce $\Lambda_0=0,$ hence $\Delta u_k\equiv0,$ and $|P_k|\to \infty,$ then the strong maximum principle implies that $u_k\equiv1$ and so $D$ must be $\mathbb{H}$ and $L=0,$ but we can calculate directly that $d_k=\infty$ and $L=\infty$ from the definition of $d_k,$ $|P_k|\to \infty$ and $($\ref{blowupofmu}$)$. A contradiction!

Step 2: $ \lambda(\mu,\rho)\to\infty,$ when $\mu<\mu_0$ and $\mu\to 0.$\\
Just repeat the blow-up analysis similar to Step 1.

 Apply Theorem \ref{localmax} and Theorem \ref{complication}, there are $P\in\Omega$ and  
\[v(\mu,\rho):=\lambda(\mu,\rho)^{-\frac{1}{p-2}} u(\mu,\rho)(\frac{x}{\sqrt{\lambda(\mu,\rho)}} + P) \quad\text{ for }\  x \in \tilde{\Omega}_{\mu} := \frac{\Omega - P}{\sqrt{\lambda(\mu,\rho)}}\]
such that $v(\mu,\rho)\to V_\rho(x)$ in $C_{loc}^{2}(\mathbb{R}^N)$ as $\mu\to 0,$
where \(V_\rho\) is the unique positive solution of
\[\left\{\begin{array}[]{ll}-\Delta V_\rho+V_\rho=\rho\Lambda_0V_\rho^{p-1}&\quad\text{ in }\mathbb{R}^{N}, \\ V_\rho(0)=\max\limits_{x\in\mathbb{R}^N}V_\rho,&\\ V_\rho(x)\to 0~{}~{}\text{as}~{}~{}|x|\to+\infty&\end{array}\right.\]
where $\Lambda_0=|P|^{-\alpha}>0.$
\end{proof}

To complete the proof of Theorem \ref{main result}, we only need to prove the following proposition.

\begin{proposition}\label{mainproposition}
    Let $\alpha,\mu>0,$ $N\geq3,$ $2+4/N<p<2^*,$ $\rho\in[\frac{1}{2},1],$ and $\Omega\subset\mathbb{R}^N$ be an exterior domain with $0\notin\overline\Omega$. Then there is a $\mu_0>0$ such that for any $\mu\in(0,\mu_0),$ equations $($\ref{1.2}$)$-$($\ref{1.1rho}$)$ has a positive solution for some $\lambda> 0.$
\end{proposition}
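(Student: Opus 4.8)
The plan is as follows. By Theorem \ref{submass}, equation \eqref{1.1rho} has, for every $\mu>0$, a positive $(\mu,\rho)$ sub-mass mountain pass solution $u(\mu,\rho)$ with multiplier $\lambda(\mu,\rho)\ge 0$ and $E_\rho(u(\mu,\rho))=c_\rho(\mu)$. Since such a solution already solves the PDE and, by the maximum principle as in Section \ref{sec5}, is strictly positive, the only thing left is to upgrade "sub-mass" to "mass exactly $\mu$", i.e.\ to show $\int_\Omega u(\mu,\rho)^2=\mu$ together with $\lambda(\mu,\rho)>0$. The positivity of the multiplier is immediate from Theorem \ref{asymtotic}: it gives $\lambda(\mu,\rho)\to+\infty$ as $\mu\to 0$, so there is $\mu_0>0$ with $\lambda(\mu,\rho)>0$ for every $\mu\in(0,\mu_0)$; moreover the quantitative ingredients behind Theorem \ref{asymtotic} (the lower bound $c_\rho(\mu)\ge k_0/4$ and the blow-up dichotomy) do not degenerate for $\rho\in[\tfrac12,1]$, so $\mu_0$ may be taken independent of $\rho$.

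The heart of the matter is the following compactness fact. If $\{v_n\}\subset H^1_0(\Omega)$ is bounded, $v_n\rightharpoonup v$, and
\[
-\Delta v_n + a_n v_n - b_n|x|^{-\alpha}|v_n|^{p-2}v_n \longrightarrow 0 \quad\text{in } H^{-1}(\Omega)
\]
with $a_n\to a>0$ and $b_n\to b>0$, then $v_n\to v$ strongly in $H^1_0(\Omega)$. I would prove this exactly as in the proof of Proposition \ref{prop3.5}: writing $w_n:=v_n-v\rightharpoonup 0$, weak convergence of the quadratic terms, a Brezis--Lieb splitting of $\int_\Omega|x|^{-\alpha}|v_n|^p$ (legitimate since the weight is bounded by $r_0^{-\alpha}$ on $\Omega$), and subtraction of the limiting equation $-\Delta v+av=b|x|^{-\alpha}|v|^{p-2}v$ give $\int_\Omega(|\nabla w_n|^2+a\,w_n^2)\,dx=b\int_\Omega|x|^{-\alpha}|w_n|^p\,dx+o(1)$; one then shows $\int_\Omega|x|^{-\alpha}|w_n|^p\,dx\to 0$ by splitting $\Omega=(\Omega\cap B_R)\cup(\Omega\setminus B_R)$, using $|x|^{-\alpha}\le r_0^{-\alpha}$ together with Rellich compactness on the first piece and $|x|^{-\alpha}\le R^{-\alpha}$ together with the uniform Gagliardo--Nirenberg bound on $\|v_n\|_{L^p(\Omega)}$ on the second, and letting $R\to\infty$. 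Since $a>0$, the left-hand side is comparable to $\|w_n\|_{H^1_0(\Omega)}^2$, whence strong convergence.

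I would then apply this along the two layers through which $u(\mu,\rho)$ is produced. For $\rho$ in the full-measure set of Theorem \ref{monotonous}, $u(\mu,\rho)$ is the weak limit of a bounded Palais--Smale sequence $\{u_n\}\subset S_\mu$ (with $b_n\equiv\rho$ and $a_n=\lambda_n\to\lambda(\mu,\rho)>0$), so the fact above forces $u_n\to u(\mu,\rho)$ in $H^1_0(\Omega)$ and hence $\int_\Omega u(\mu,\rho)^2=\lim_n\int_\Omega u_n^2=\mu$; in particular, for $\mu\in(0,\mu_0)$ the Proposition \ref{prop3.5} sub-mass solution already carries full mass. For an arbitrary $\rho\in[\tfrac12,1]$, $u(\mu,\rho)$ is (as in the proof of Theorem \ref{submass}) the weak limit of sub-mass solutions $u_{\rho_n}$ with $\rho_n\to\rho$ chosen from that full-measure set and $\lambda_{\rho_n}\to\lambda(\mu,\rho)>0$; by the previous sentence each $u_{\rho_n}$ lies on $S_\mu$, and applying the compactness fact with $v_n=u_{\rho_n}$, $a_n=\lambda_{\rho_n}$, $b_n=\rho_n$ again yields strong convergence, hence $\int_\Omega u(\mu,\rho)^2=\lim_n\int_\Omega u_{\rho_n}^2=\mu$. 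Thus in every case $u(\mu,\rho)\in S_\mu$ is a positive solution of \eqref{1.1rho} with $\lambda(\mu,\rho)>0$, which is the assertion; the case $\rho=1$ is Theorem \ref{main result}.

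The step I expect to be the main obstacle is the vanishing of $\int_\Omega|x|^{-\alpha}|w_n|^p$ on the exterior region $\Omega\setminus B_R$ — equivalently, excluding that mass and energy leak to spatial infinity. This is exactly the phenomenon emphasized in the introduction: for $\alpha=0$ the exterior mountain pass level coincides with the one on $\mathbb{R}^N$ and this splitting fails, whereas for $\alpha>0$ the decay of $|x|^{-\alpha}$, combined with $\lambda(\mu,\rho)>0$ (so that any nontrivial "profile at infinity" would have to solve $-\Delta u+\lambda u=0$ and therefore vanish), rules leakage out. A secondary technical point, already flagged above, is to ensure that the threshold $\mu_0$ is uniform in $\rho$ so that the approximating sub-mass solutions $u_{\rho_n}$ genuinely belong to $S_\mu$.
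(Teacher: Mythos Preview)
Your proposal is correct and follows essentially the same route as the paper: invoke Theorem \ref{asymtotic} to guarantee $\lambda(\mu,\rho)>0$ for small $\mu$, then run the compactness argument of Proposition \ref{prop3.5} (splitting $\Omega$ into a large ball and its complement, using Rellich on the former and the decay $|x|^{-\alpha}\le R^{-\alpha}$ on the latter) at the two successive layers---first the Palais--Smale sequence for a.e.\ $\rho$, then the sequence $u_{\rho_n}\to u(\mu,\rho)$---to upgrade weak to strong convergence and hence preserve the $L^2$-mass. Your explicit isolation of the compactness fact and your flag that $\mu_0$ must be uniform in $\rho$ (so that the approximating $u_{\rho_n}$ genuinely sit on $S_\mu$) are in fact more careful than the paper's own terse treatment of this point.
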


\begin{proof}
In Theorem \ref{submass}, we have shown that for any $\mu>0,$ $\rho\in[\frac{1}{2},1],$ there is a positive $(\mu,\rho)$ sub-mass mountain pass solution $u(\mu,\rho)$ of equation $($\ref{1.1rho}$)$. Keep $\rho$ constant, from Theorem \ref{asymtotic}, there is a $\mu_0>0$ such that $\lambda(\mu,\rho)>0$ for all $\mu\in(0,\mu_0)$. So we only need to prove $u(\mu,\rho)\in S_\mu$ for all $\mu\in(0,\mu_0).$ From Proposition \ref{prop3.5}, 
there is a sequence $\{u(\mu,\bar\rho_n)\}$ satisfying $\bar\rho_n\to \rho,$ and for every $\bar\rho_n,$ there is a bounded $PS$ sequence $\{u_{\bar\rho_n}^m\}_m\subset S_\mu$  satisfying $(i)-(iv)$ from Theorem \ref{monotonous} such that $u_{\bar\rho_n}^m\rightharpoonup u(\mu,\bar\rho_n)$ as $m\to \infty$. 
As in the proof of Proposition \ref{prop3.5}, for any $\mu\in(0,\mu_0),$ $\lambda(\mu,\rho)>0$ implies that $u_{\bar\rho_n}^m$ converges strongly to $u(\mu,\bar\rho_n)$ as $m\to\infty.$ Hence $u(\mu,\bar\rho_n)\in S_\mu.$ Repeat this process, we shall prove that $u(\mu,\bar\rho_n)$ converges strongly to $u(\mu,\rho)$ as $n\to\infty.$ Proof completed.
 \end{proof}

\begin{proof}[Proof of Theorem \ref{main result}]
    The result is obtained by setting $\rho=1$ in Proposition \ref{mainproposition}.
\end{proof}

\section*{Acknowledgments}
The research of Xiaojun Chang is partially supported by NSFC (12471102), NSF of Jilin Province (20250102004JC), and the Research Project of the Education Department of Jilin Province (JJKH20250296KJ).

\end{document}